\newtheorem{thm}{Theorem}[section]
\newtheorem{lem}[thm]{Lemma}
\newtheorem{prop}[thm]{Proposition}
\newtheorem{cor}[thm]{Corollary}
\newtheorem{defn}[thm]{Definition}
\newtheorem{rem}[thm]{Remark}
\newtheorem{eg}[thm]{Example}
 \def\ppmod#1{\, {\rm (mod\, }{#1)}}
 \font\cyr=wncyr10
 \newcommand{\nc}{\newcommand}
\DeclareMathOperator{\rank}{{rank}}
\nc{\per}[1]{\underset{#1}{\boldsymbol \pi}\,}
\nc{\sha}{{\mbox{\cyr x}}}
 \nc{\MT}{{\rm MT}}
 \nc{\XX}{{X}}
 \nc{\gF}{{\varPhi}}
 \nc{\ot}{\otimes}
 \nc{\wht}{\widehat}
 \nc{\bwg}{{\bigwedge}}
 \nc{\wg}{{\wedge}}
 \nc{\mmu}{{\boldsymbol{\mu}}}
 \nc{\mal}{{{\scriptstyle \maltese}}}
 \nc{\fA}{{\mathfrak A}}
 \nc{\HH}{{\mathfrak H}}
 \nc{\ra}{\rightarrow}
 \nc{\ors}{{\bfs}}
 \nc{\orr}{{\bfr}}
 \nc{\os}{{\overset}}
 \nc{\G}{{\mathbb G}}
 \nc{\F}{{\mathbb F}}
 \nc{\HHH}{{\mathbb H}}
 \nc{\Z}{{\mathbb Z}}
 \nc{\R}{{\mathbb R}}
 \nc{\N}{{\mathbb N}}
 \nc{\ZN}{{\mathbb Z_{\ge 0}}}
 \nc{\Q}{{\mathbb Q}}
 \nc{\CC}{{\mathbb C}}
 \nc{\CP}{{\mathbb{CP}}}
 \nc{\Cnn}{{\mathbb C}_{\ge 0}}
 \nc{\Cp}{{\mathbb C}_{>0}}
 \nc{\MPV}{{\mathcal{MPV}}}
 \nc{\tB}{{\tilde B}}
 \nc{\tg}{{\tilde{g}}}
 \nc{\tn}{{\tilde{n}}}
 \nc{\tgz}{{\tilde{\zeta}}}
 \nc{\tPsi}{{\tilde{\Psi}}}
 \nc{\od}{{\rm{od}}}
 \nc{\tz}{{\tilde{\zeta}}}
 \nc{\suf}{{\ast\,}}
 \nc{\sufq}{{\ast_q\,}}
 \nc{\gam}{{\gamma}}
 \nc{\gG}{{\Gamma}}
 \nc{\om}{{\omega}}
 \nc{\vep}{{\varepsilon}}
 \nc{\ga}{{\alpha}}
 \nc{\gl}{{\lambda}}
 \nc{\gb}{{\beta}}
 \nc{\gf}{{\varphi}}
 \nc{\gd}{{\delta}}
 \nc{\orgd}{{\vec \gd\,}}
 \nc{\gs}{{\sigma}}
 \nc{\gth}{{\theta}}
 \nc{\gS}{{\Sigma}}
 \nc{\gk}{{\kappa}}
  \nc{\gz}{{\zeta}}
 \nc{\gO}{{\Omega}}
 \nc{\sif}{{\mathcal S}}
 \nc{\gt}{{\tau}}
 \nc{\Lra}{\Longrightarrow}
 \nc{\lra}{\longrightarrow}
 \nc{\lmaps}{\longmapsto}
 \nc{\fS}{{\mathfrak S}}
 \nc{\DD}{{\mathfrak D}}
 \nc{\Llra}{\Longleftrightarrow}
 \nc{\ol}{\overline}
 \nc{\ola}{\overleftarrow}
 \nc{\lms}{\longmapsto}
 \nc{\cv}{{{\mathsf c}{\mathsf v}}}
 \nc{\zq}{{\zeta_q}}
 \nc\qup{{q\uparrow 1}}
 \nc{\us}{\underset}
 \nc{\gD}{{\Delta}}
 \nc{\bi}{{\bf i}}
 \nc{\bfone}{{\bf 1}}
 \nc{\bfa}{{\bf a}}
 \nc{\bfb}{{\bf b}}
 \nc{\bfc}{{\bf c}}
 \nc{\bfd}{{\bf d}}
 \nc{\bfe}{{\bf e}}
 \nc{\bff}{{\bf f}}
 \nc{\bfg}{{\bf g}}
 \nc{\bfi}{{\bf i}}
 \nc{\bfj}{{\bf j}}
 \nc{\bfn}{{\bf n}}
 \nc{\bfl}{{\bf l}}
 \nc{\bfk}{{\bf k}}
 \nc{\bfm}{{\bf m}}
 \nc{\bfo}{{\bf o}}
 \nc{\bfp}{{\bf p}}
 \nc{\bfq}{{\bf q}}
 \nc{\bfr}{{\bf r}}
 \nc{\bfs}{{\bf s}}
 \nc{\bft}{{\bf t}}
 \nc{\bfu}{{\bf u}}
 \nc{\bfv}{{\bf v}}
 \nc{\bfw}{{\bf w}}
 \nc{\bfx}{{\bf x}}
 \nc{\bfy}{{\bf y}}
 \nc{\bfz}{{\bf z}}
 \nc{\bfB}{{\bf B}}
 \nc{\bfP}{{\bf P}}
 \nc{\bfQ}{{\bf Q}}
 \nc{\bfY}{{\bf Y}}
 \nc{\bfgb}{{\boldsymbol \gb}}
 \nc{\bfga}{{\boldsymbol \ga}}
 \nc{\bfrho}{{\boldsymbol \rho}}
 \nc{\bfchi}{{\boldsymbol \chi}}
 \nc{\QX}{{\Q\langle \bfX\rangle}}
 \nc{\QY}{{\Q\langle \bfY\rangle}}
 \nc{\CX}{{\C\langle \bfX\rangle}}
 \nc{\CY}{{\C\langle \bfY\rangle}}
 \nc{\QXX}{{\Q\langle\!\langle \bfX\rangle\!\rangle}}
 \nc{\QYY}{{\Q\langle\!\langle \bfY\rangle\!\rangle}}
 \nc{\CXX}{{\C\langle\!\langle \bfX\rangle\!\rangle}}
 \nc{\CYY}{{\C\langle\!\langle \bfY\rangle\!\rangle}}
 \nc{\bbA}{{\mathbb A}}
 \nc{\bbB}{{\mathbb B}}
 \nc{\bbC}{{\mathbb C}}
 \nc{\bbD}{{\mathbb D}}
 \nc{\bbE}{{\mathbb E}}
 \nc{\bbF}{{\mathbb F}}
 \nc{\bbG}{{\mathbb G}}
 \nc{\bbH}{{\mathbb H}}
 \nc{\bbI}{{\mathbb I}}
 \nc{\bbJ}{{\mathbb J}}
 \nc{\bbK}{{\mathbb K}}
 \nc{\bbL}{{\mathbb L}}
 \nc{\bbM}{{\mathbb M}}
 \nc{\bbN}{{\mathbb N}}
 \nc{\bbO}{{\mathbb O}}
 \nc{\bbP}{{\mathbb P}}
 \nc{\bbQ}{{\mathbb Q}}
 \nc{\bbR}{{\mathbb R}}
 \nc{\bbS}{{\mathbb S}}
 \nc{\bbT}{{\mathbb T}}
 \nc{\bbU}{{\mathbb U}}
 \nc{\bbV}{{\mathbb V}}
 \nc{\bbW}{{\mathbb W}}
 \nc{\bbX}{{\mathbb X}}
 \nc{\bbY}{{\mathbb Y}}
 \nc{\bbZ}{{\mathbb Z}}
 \nc{\bba}{{\mathbb a}}
 \nc{\bbb}{{\mathbb b}}
 \nc{\bbc}{{\mathbb c}}
 \nc{\bbd}{{\mathbb d}}
 \nc{\bbe}{{\mathbb e}}
 \nc{\bbf}{{\mathbb f}}
 \nc{\bbg}{{\mathbb g}}
 \nc{\bbh}{{\mathbb h}}
 \nc{\bbi}{{\mathbb i}}
 \nc{\bbk}{{\mathbb k}}
 \nc{\bbl}{{\mathbb l}}
 \nc{\bbm}{{\mathbb m}}
 \nc{\bbn}{{\mathbb n}}
 \nc{\bbo}{{\mathbb o}}
 \nc{\bbp}{{\mathbb p}}
 \nc{\bbq}{{\mathbb q}}
 \nc{\bbr}{{\mathbb r}}
 \nc{\bbs}{{\mathbb s}}
 \nc{\bbt}{{\mathbb t}}
 \nc{\bbu}{{\mathbb u}}
 \nc{\bbv}{{\mathbb v}}
 \nc{\bbw}{{\mathbb w}}
 \nc{\bbx}{{\mathbb x}}
 \nc{\bby}{{\mathbb y}}
 \nc{\bbz}{{\mathbb z}}
 \nc{\MZV}{{\mathcal{MZV}}}
 \nc{\calA}{{\mathcal A}}
 \nc{\calB}{{\mathcal B}}
 \nc{\calC}{{\mathcal C}}
 \nc{\calD}{{\mathcal D}}
 \nc{\calE}{{\mathcal E}}
 \nc{\calF}{{\mathcal F}}
 \nc{\calG}{{\mathcal G}}
 \nc{\calH}{{\mathcal H}}
 \nc{\calI}{{\mathcal I}}
 \nc{\calJ}{{\mathcal J}}
 \nc{\calK}{{\mathcal K}}
 \nc{\calL}{{\mathcal L}}
 \nc{\calM}{{\mathcal M}}
 \nc{\calN}{{\mathcal N}}
 \nc{\calO}{{\mathcal O}}
 \nc{\calP}{{\mathcal P}}
 \nc{\calQ}{{\mathcal Q}}
 \nc{\calR}{{\mathcal R}}
 \nc{\calS}{{\mathcal S}}
 \nc{\calT}{{\mathcal T}}
 \nc{\calU}{{\mathcal U}}
 \nc{\calV}{{\mathcal V}}
 \nc{\calW}{{\mathcal W}}
 \nc{\calX}{{\mathcal X}}
 \nc{\calY}{{\mathcal Y}}
 \nc{\calZ}{{\mathcal Z}}
  \nc{\cala}{{\mathcal a}}
 \nc{\calb}{{\mathcal b}}
 \nc{\calc}{{\mathcal c}}
 \nc{\cald}{{\mathcal d}}
 \nc{\cale}{{\mathcal e}}
 \nc{\calf}{{\mathcal f}}
 \nc{\calg}{{\mathcal g}}
 \nc{\calh}{{\mathcal h}}
 \nc{\cali}{{\mathcal i}}
 \nc{\calj}{{\mathcal j}}
 \nc{\calk}{{\mathcal k}}
 \nc{\call}{{\mathcal l}}
 \nc{\calm}{{\mathcal m}}
 \nc{\caln}{{\mathcal n}}
 \nc{\calo}{{\mathcal o}}
 \nc{\calp}{{\mathsf p}}
 \nc{\calq}{{\mathcal q}}
 \nc{\calr}{{\mathcal r}}
 \nc{\cals}{{\mathcal s}}
 \nc{\calt}{{\mathcal t}}
 \nc{\calu}{{\mathcal u}}
 \nc{\calv}{{\mathcal v}}
 \nc{\calw}{{\mathcal w}}
 \nc{\calx}{{\mathcal x}}
 \nc{\caly}{{\mathcal y}}
 \nc{\calz}{{\mathcal z}}
 \nc{\frakA}{{\mathfrak A}}
 \nc{\frakB}{{\mathfrak B}}
 \nc{\frakC}{{\mathfrak C}}
 \nc{\frakD}{{\mathfrak D}}
 \nc{\frakE}{{\mathfrak E}}
 \nc{\frakF}{{\mathfrak F}}
 \nc{\frakG}{{\mathfrak G}}
 \nc{\frakH}{{\mathfrak H}}
 \nc{\frakI}{{\mathfrak I}}
 \nc{\frakJ}{{\mathfrak J}}
 \nc{\frakK}{{\mathfrak K}}
 \nc{\frakL}{{\mathfrak L}}
 \nc{\frakM}{{\mathfrak M}}
 \nc{\frakN}{{\mathfrak N}}
 \nc{\frakO}{{\mathfrak O}}
 \nc{\frakP}{{\mathfrak P}}
 \nc{\frakQ}{{\mathfrak Q}}
 \nc{\frakR}{{\mathfrak R}}
 \nc{\frakS}{{\mathfrak S}}
 \nc{\frakT}{{\mathfrak T}}
 \nc{\frakU}{{\mathfrak U}}
 \nc{\frakV}{{\mathfrak V}}
 \nc{\frakW}{{\mathfrak W}}
 \nc{\frakX}{{\mathfrak X}}
 \nc{\frakY}{{\mathfrak Y}}
 \nc{\frakZ}{{\mathfrak Z}}
 \nc{\fraka}{{\mathfrak a}}
 \nc{\frakb}{{\mathfrak b}}
 \nc{\frakc}{{\mathfrak c}}
 \nc{\frakd}{{\mathfrak d}}
 \nc{\frake}{{\mathfrak e}}
 \nc{\frakf}{{\mathfrak f}}
 \nc{\frakg}{{\mathfrak g}}
 \nc{\frakh}{{\mathfrak h}}
 \nc{\fraki}{{\mathfrak i}}
 \nc{\frakj}{{\mathfrak j}}
 \nc{\frakk}{{\mathfrak k}}
 \nc{\frakl}{{\mathfrak l}}
 \nc{\frakm}{{\mathfrak m}}
 \nc{\frakn}{{\mathfrak n}}
 \nc{\frako}{{\mathfrak o}}
 \nc{\frakp}{{\mathfrak p}}
 \nc{\frakq}{{\mathfrak q}}
 \nc{\frakr}{{\mathfrak r}}
 \nc{\fraks}{{\mathfrak s}}
 \nc{\frakt}{{\mathfrak t}}
 \nc{\fraku}{{\mathfrak u}}
 \nc{\frakv}{{\mathfrak v}}
 \nc{\frakw}{{\mathfrak w}}
 \nc{\frakx}{{\mathfrak x}}
 \nc{\fraky}{{\mathfrak y}}
 \nc{\frakz}{{\mathfrak z}}
 \nc{\barfrakz}{{\bar{\mathfrak z}}}
 \nc{\barfrakI}{{\bar{\mathfrak I}}}
 \nc{\barI}{{\bar{I}}}
 \nc{\bargb}{{\bar{\gb}}}
 \nc{\tG}{{\tilde{G}}}
 \nc{\so}{{\mathfrak so}}
 \nc{\sa}{{\mbox{{\scriptsize \cyr x}}}}
 \nc{\slfour}{{\mathfrak sl}_4}
 \nc{\one}{{\bf 1}}
 \nc{\zero}{{\bf 0}}
 \nc{\Qxy}{\Q\langle x,y\rangle}
 \nc{\iij}{{i}}
\nc{\bga}{{\boldsymbol \ga}}
\nc{\gL}{{\Lambda}}
 \nc{\DZ}{\mathcal{DZ}}
 \nc{\PDZ}{\mathcal{PDZ}}
\newcommand{\ZZ}{{\mathcal Z}}
\begin{document}

\title[Double Zeta Values and Double Eisenstein Series]{Double Shuffle Relations of Double Zeta Values \\
and Double Eisenstein Series at Level $N$}


\begin{abstract}
In their seminal paper ``Double zeta values and modular forms''
Gangl, Kaneko and Zagier defined a double Eisenstein series and used it to study
the relations between double zeta values. One of their key ideas is to study the formal
double space and apply the double shuffle relations. They also proved the double
shuffle relations for the double Eisenstein series.
More recently, Kaneko and Tasaka extended the double Eisenstein series to level 2,
proved its double shuffle relations and studied the double zeta values at level 2.
Motivated by the above works, we define in this paper
the corresponding objects at higher levels and prove that the double Eisenstein series at level
$N$ satisfies the double shuffle relations for every positive integer $N$. In order to
obtain our main theorem we prove a key result on the multiple divisor functions at level $N$
and then use it to solve a complicated under-determined system of linear equations by
some standard techniques from linear algebra.
\end{abstract}

\maketitle

\tableofcontents

\allowdisplaybreaks

\section{Introduction}
Eisenstein series have played important roles in the study of modular
forms and elliptic curves. One of their most important properties is
that the constant term of their Fourier series expansion
is essentially given by the Riemann zeta values at even weight.
In the seminal paper \cite{GKZ2006} Gangl, Kaneko
and Zagier defined a double Eisenstein series and used it to
study the relations between double zeta values.
One of their key ideas is to study the formal double space
and apply the double shuffle relations. They then proved the double
shuffle relations for the double Eisenstein series. The double zeta
relations have also been considered by Baumard and Schneps in
\cite{BaumardSc2011} from the point of view of period polynomials
and double shuffle Lie algebra defined by Ihara. More recently,
Kaneko and Tasaka  \cite{KanekoTa2013} and
Nakamura and Tasaka \cite{NakamuraTa2013} extended the double
Eisenstein series to level 2, proved its double shuffle relations
and studied the double zeta values at level 2.
Motivated by the above works, we define in this paper
the corresponding objects at higher levels and consider the
double shuffle relations satisfied by them.

We follow the notation in \cite{GKZ2006}: for any $m,c\in\Z$
and $\tau\in \HHH$ (upper half plane), we write
$m\tau +c \succ 0$ if $m>0$ or $m=0$ and $c>0$ and
$m\tau +c \succ n\tau +d$ if $m\tau+c-(n\tau+d) \succ 0$.
For any $\bfa=(a_1,\dots,a_d)\in(\Z/N\Z)^d$
and $\bfs=(s_1,\dots,s_d)\in\N^d$ we define the multiple
Eisenstein series at level $N$ by
\begin{equation}\label{defn:Eserie}
G_\bfs^\bfa(\tau)=G_\bfs^{\bfa;N}(\tau)=\sum_{\substack{
m_1 N\tau+c_1\succ \dots\succ m_d N\tau+c_d\succ 0  \\
m_j,c_j\in \Z,\
c_j \equiv a_j \ppmod{N}\ \forall j }}
\frac{1}{(m_1 N\tau+c_1)^{s_1}\cdots (m_d N\tau+c_d)^{s_d}}.
\end{equation}
Here, by convention, we often choose $0\le a<N$ to represent the
residue class congruent to $a$ modulo $N$.
It is not too hard to show that the series converges absolutely when
$s_1\ge 3$ and $s_j\ge 2$ for all $j\ge 2$.
We will call $d$ the depth and the sum $s_1+\dots+s_d$ the weight.
At level one case, Gangl, Keneko and Zagier \cite{GKZ2006} studied
the double Eisenstein series and related them to modular form
by using the Eichler-Shimura correspondence. In \cite{Bachmann2012}
Bachmann generalized this to arbitrary depth and obtained many
interesting relations among these and the classical Eisenstein series
(and the cusp form $\gD$) using the double shuffle relations.

The main idea to study the multiple Eisenstein series is by using
their Fourier series expansions with the help of the so called
multiple divisor functions at level $N$ defined as follows:
For $\bfa=(a_1,\dots,a_d)\in(\Z/N\Z)^d$ and
$\bfs=(s_1,\dots,s_d)\in(\N\cup\{0\})^d$
\begin{equation}\label{equ:Mdivisor}
\gs_\bfs^\bfa(m)=\gs_\bfs^{\bfa;N}(m)=\sum_{\substack{
u_1v_1+\dots+u_dv_d=m \\
u_1>\dots>u_d>0 \\
u_j,v_j\in \N\ \forall j=1,\dots,d  }}
\eta^{a_1v_1+\dots+a_d v_d} v_1^{s_1} \dots v_d^{s_d}
\end{equation}
where $\eta=\eta_N=\exp(2\pi i/N)$ is the primitive $N$th
root of unity and $v_1, \cdots, v_d$ are positive integers.
Obviously, one can recover
the classical divisor function by setting $N=d=1$.

We now briefly describe the content of the paper.
In the next section we shall first define the double zeta
values at level $N$ and write down explicitly the double
shuffle relations satisfied by these values. Then
we consider the same problem in the formal vector space
corresponding to the double zeta values. As consequences of
these double shuffle relations we prove two sum formulas
in Theorem~\ref{thm:sumformula}.

By the general philosophy, the constant terms of level $N$
multiple Eisenstein series are
given by the level $N$ multiple zeta values. Further, we expect that
level $N$ multiple zeta values satisfy the double shuffle relations such as
those given in Proposition~\ref{prop:dbsfreg}. Hence we would like to
know if the corresponding level $N$ multiple Eisenstein series also
satisfy similar relations. When $N=1$ this has been studied by
Bachmann and Tasaka \cite{BachmannTa2015}.

The main goal of the paper is to prove
Theorem~\ref{thm:EisenSeriesDSforG} which gives the double
shuffle relations of double Eisenstein series at level $N$
for every positive integer $N$. The difficulty in
generalizing the known $N=1$ and $N=2$ cases to arbitrary
levels lies in the fact that there are many choices of the
constant terms in the generating function of the double
Eisenstein series and the other related series. It turns out
that this fact is a consequence of an under-determined system
of linear equations with $(3N^2+N)/2$ variables and $N^2+N$
equations. Essentially, we need to show these equations
are consistent with each other. For this we need a key result
concerning $N$-th roots of unity and the multiple divisor
functions at level $N$ which will be proved in section
\ref{sec: multipleDivisorFunction}. In the last section,
using some standard techniques from linear algebra we prove the
solvability of the linear system mentioned above. This
enables us to derive our main result
Theorem~\ref{thm:EisenSeriesDS} which generalizes
\cite[Theorem~7]{GKZ2006} and \cite[Theorem~3]{KanekoTa2013}

\section{The multiple zeta values at level $N$}
For any $\bfs=(s_1,\dots,s_d)\in\N^d$ with $s_1\ge 2$ and
$\bfa=(a_1,\dots,a_d)\in\Z/N\Z$,  we define the
\emph{multiple zeta values at level} $N$ by
\begin{equation}\label{equ:zetavalLevelN}
\zeta_N^\bfa(\bfs) = \sum_{n_1>\cdots>n_d>0,\ n_j\equiv a_j\ppmod{N}\ \forall j}
\frac{1}{n_1^{s_1}\cdots n_d^{s_d}}.
\end{equation}
These numbers are rational multiples of the multiple Hurwitz zeta values
which have been studied by many authors. If we define the multiple polylogarithms
\begin{equation*}
Li_\bfs(x_1,\dots,x_d)= \sum_{n_1>\cdots>n_d>0}
\frac{x_1^{n_1}\dots x_d^{n_d}}{n_1^{s_1}\cdots n_d^{s_d}}.
\end{equation*}
Then it is not hard to see that
\begin{equation}\label{equ:zetavalLevelNrelLi}
\zeta_N^\bfa(\bfs)=  \frac1{N^d} \sum_{\gb_1=1}^N\cdots\sum_{\gb_d=1}^N
    \eta^{-\gb_1 a_1-\dots-\gb_d a_d} Li_\bfs(\eta^{\gb_1},\dots,\eta^{\gb_d}).
\end{equation}

We now restrict ourself to levels at one or two.
We remark that our definition of the double zeta value at level two is slightly
different from that of \cite{KanekoTa2013} since we allow
$a_1=a_2=0$ in which case we in fact essentially recover the usual
double zeta values (at level 1).

We can also use Chen's iterated integrals to derive formulas similar to
\eqref{equ:zetavalLevelNrelLi} which will be
useful in the regularization of these values. Let $\om=\frac{dt}{t}$,
$\om^a_\ga=\om(N)^a_\ga=\frac{(\eta^\ga t)^{a-1}\, dt}{1-\eta^\ga t}$
($1\le \ga\le N$) be 1-forms. By the partial fraction expansion
\begin{equation*}
    \frac{t^{a-1}}{1-t^N}=\frac1N \sum_{\ga=1}^N \frac{\eta^{-\ga(a-1)}}{1-\eta^\ga t}
\end{equation*}
we see immediately that
\begin{align}\label{equ:zetavalInt}
 \zeta_N^{a} (r)=&\int_0^1 \om^{r-1} \frac{t^{a-1}\, dt}{1-t^N}
 \left(\overset{\text{def}}{=} {\mathop {\int \cdots \int}_{1>t_1>t_2>\cdots>t_{r}>0} } \frac{d  t_1}{t_1} \cdot \frac{d  t_2}{t_2} \cdots  \frac{d  t_{r-1}}{t_{r-1}} \cdot \frac{t_r^{a-1} d  t_r}{1-t_r^N}\right)\\
 =&\frac1N\sum_{\ga=1}^N \int_0^1 \om^{r-1}\frac{\eta^{-\ga(a-1)}\, dt}{1-\eta^\ga t}
 =\frac1N\sum_{\ga=1}^N \eta^{-\ga a} Li_r(\eta^\ga).    \notag
\end{align}\noindent
Furthermore,
\begin{align}
\zeta_N^{a,b}  (r,s)=&\int_0^1 \om^{r-1} \frac{t^{a-b-1}\, dt}{1-t^N}
        \om^{s-1} \frac{t^{b-1}\, dt}{1-t^N} \notag \\
=&\frac1{N^2}\sum_{\ga=1}^N \sum_{\gb=1}^N \int_0^1 \om^{r-1}\frac{\eta^{-\ga(a-b-1)}\, dt}{1-\eta^\ga t}
      \om^{s-1}\frac{\eta^{-\gb(b-1)}\, dt}{1-\eta^\gb t}    \label{equ:dblzetavalInt} \\
=&\frac1{N^2}\sum_{\ga=1}^N \sum_{\gb=1}^N \eta^{-\ga(a-b)-\gb b} Li_{r,s}(\eta^\ga,\eta^{\gb-\ga}) \notag
\end{align}

Our first result is the following explicit form of double shuffle relations.

\begin{prop} \label{prop:dbsf}
For positive integers $r, s\ge2$ and integers $a,b\in\Z/N\Z$, we have
\begin{align} \label{equ:stuffle}
\zeta_N^{a} (r) \zeta_N^{b} (s) = &\zeta_N^{a,b} (r,s) + \zeta_N^{b,a} (s,r) +\gd_{a,b} \zeta_N^{a} (s+r)\\
= & \sum_{i=0}^{s-1}\binom{r+i-1}{r-1} \zeta_N^{a+b,b} (r+i,s-i)
    +  \sum_{j=0}^{r-1}\binom{s+j-1}{s-1} \zeta_N^{a+b,a} (s+j,r-j) \notag \\
=&\sum_{\substack{i+j=r+s\\  i\ge 2, j\ge 1}} \left(\binom{i-1}{r-1} \zeta_N^{a+b,b}(i,j)
    + \binom{i-1}{s-1} \zeta_N^{a+b,a}(i,j)\right), \notag
\end{align}\noindent
where $\gd_{a,b}$ is the Kronecker symbol, namely, $\gd_{a,b}=1$ if $a=b$ and $\gd_{a,b}=0$ if $a\ne b$.
\end{prop}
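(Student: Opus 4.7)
The plan is to establish the two halves of the double shuffle relation separately — the series (stuffle) side gives the first equality and the iterated integral (shuffle) side gives the second — and then observe that the final expression is simply a re-indexing of the shuffle side.

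For the stuffle relation, I would start from the defining series
\[
\zeta_N^{a}(r)\,\zeta_N^{b}(s) = \sum_{\substack{n_1\equiv a,\, n_2\equiv b \\ n_1,n_2>0}} \frac{1}{n_1^{r}\,n_2^{s}}
\]
and split the sum according to whether $n_1>n_2$, $n_1<n_2$, or $n_1=n_2$. The first two regions give $\zeta_N^{a,b}(r,s)$ and $\zeta_N^{b,a}(s,r)$ directly. The diagonal $n_1=n_2$ contributes only when the residue conditions are compatible, i.e.\ when $a\equiv b\pmod N$, producing the Kronecker term $\gd_{a,b}\zeta_N^{a}(r+s)$. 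This is the first equality.

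For the shuffle relation, I would pass to the iterated integral representation. Writing $\gt^{(c)}=\frac{t^{c-1}\,dt}{1-t^N}$, formula \eqref{equ:zetavalInt} identifies $\zeta_N^{a}(r)$ with the iterated integral of the word $W_1=\om^{r-1}\gt^{(a)}$ and $\zeta_N^{b}(s)$ with that of $W_2=\om^{s-1}\gt^{(b)}$. The shuffle product formula for iterated integrals on the simplex then gives $\zeta_N^{a}(r)\,\zeta_N^{b}(s)$ as a sum of iterated integrals, one for each shuffle of $W_1$ and $W_2$. Because $\gt^{(a)}$ is the rightmost letter of $W_1$ and $\gt^{(b)}$ the rightmost letter of $W_2$, every shuffle has the form $\om^{i-1}\gt^{(a)}\om^{j-1}\gt^{(b)}$ (with $i+j=r+s$, $i\ge r$) when $\gt^{(a)}$ precedes $\gt^{(b)}$, or the mirror form when $\gt^{(b)}$ precedes $\gt^{(a)}$. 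Counting interleavings: the number of shuffles that place $r-1$ copies of $\om$ from $W_1$ into the first $i-1$ slots before $\gt^{(a)}$ is $\binom{i-1}{r-1}$; the tail is then forced. The mirror case yields coefficient $\binom{i-1}{s-1}$.

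The one bookkeeping step that deserves care — and is in effect the main obstacle — is identifying which level-$N$ double zeta value each shuffled word represents. Reading off the outer and inner exponents and comparing with the integral representation \eqref{equ:dblzetavalInt}, one sees that the word $\om^{i-1}\gt^{(a)}\om^{j-1}\gt^{(b)}$ matches $\zeta_N^{a+b,b}(i,j)$ (because the representation of $\zeta_N^{A,B}(i,j)$ has outer exponent $A-B-1$ and inner exponent $B-1$, forcing $B=b$ and $A=a+b$). Similarly the mirrored word produces $\zeta_N^{a+b,a}(i,j)$. Assembling the two cases gives
\[
\zeta_N^{a}(r)\,\zeta_N^{b}(s) = \sum_{i=0}^{s-1}\binom{r+i-1}{r-1}\zeta_N^{a+b,b}(r+i,s-i) + \sum_{j=0}^{r-1}\binom{s+j-1}{s-1}\zeta_N^{a+b,a}(s+j,r-j),
\]
which is the second equality.

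Finally, the third equality is obtained by substituting $i=r+k$ in the first sum and $i=s+k$ in the second sum and merging them into a single summation over $i+j=r+s$. The apparent widening of the range to $i\ge 2$ (rather than $i\ge r$ or $i\ge s$) is harmless since $\binom{i-1}{r-1}$ and $\binom{i-1}{s-1}$ vanish automatically for the newly included terms; the lower bound $i\ge 2$ is recorded only to ensure convergence of the double zeta values that actually appear. This completes the proposed proof.
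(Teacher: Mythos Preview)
Your proof is correct and follows essentially the same approach as the paper: the first equality comes from splitting the defining double series into the three regions $n_1>n_2$, $n_1<n_2$, $n_1=n_2$, and the second from applying Chen's shuffle product formula to the iterated integral representations \eqref{equ:zetavalInt} and \eqref{equ:dblzetavalInt}. You supply more detail than the paper (in particular, the explicit identification of $\om^{i-1}\gt^{(a)}\om^{j-1}\gt^{(b)}$ with $\zeta_N^{a+b,b}(i,j)$ via the exponent bookkeeping in \eqref{equ:dblzetavalInt}), but the underlying argument is the same.
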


\begin{proof} 
The first equality is clear by the definition \eqref{equ:zetavalLevelN}.
The second equality follows immediately from the shuffle product
formula of iterated integrals \cite[(1.5.1)]{Chen1971}:
\begin{equation*}
 \int_0^1 \om_1\cdots \om_r\int_0^1 \om_{r+1}\cdots \om_{r+s}
=\sum_\gs \int_0^1 \om_{\gs(1)}\cdots \om_{\gs(r+s)},
\end{equation*}
where $\gs$ ranges over all shuffles of type $(r,s)$, i.e.,
permutations $\gs$ of $r+s$ letters with
$\gs^{-1}(1)<\cdots<\gs^{-1}(r)$
and $\gs^{-1}(r+1)<\cdots< \gs^{-1}(r+s)$.
\end{proof}

\section{Double zeta space at level $N$}
Now we introduce the level $N$ version of the \emph{formal double zeta space} studied in \cite{GKZ2006} as follows.
Let $k\geq 2$ and $\DZ(N)_k$ be the $\Q$-vector space spanned by the formal symbols
$Z_{r,s}^{a,b}=Z(N)_{r,s}^{a,b}$, $P_{r,s}^{a,b}=P(N)_{r,s}^{a,b}$, and $Z^{a}_k=Z(N)^{a}_k$
$(r,s\ge1, r+s=k, a,b\in\Z/N\Z)$ with the set of relations
\begin{equation} \label{equ:PNab}
 P_{r,s}^{a,b} = Z_{r,s}^{a,b} + Z_{s,r}^{b,a} + \gd_{a,b} Z^a_{s+r}
 = \sum_{\substack{i+j=k\\
i,j\ge 1}} \left(\binom{i-1}{r-1} Z^{a+b,b}_{i,j} + \binom{i-1}{s-1} Z^{a+b,a}_{i,j}\right)
\end{equation}
for $r,s\ge1,\, r+s=k$. Namely,
\begin{equation*}
 \DZ(N)_k = \frac{\Q\langle Z(N)_{r,s}^{a,b},P(N)_{r,s}^{a,b}, Z(N)^{a}_k: \  a,b\in\Z/N\Z,\ r,s\ge1,\, r+s=k\rangle  }{\Q\langle \mbox{relations\ } \eqref{equ:PNab} \rangle }.
\end{equation*}
Clearly
\begin{equation*}
 \DZ(N)_k = \frac{\Q\langle Z(N)_{r,s}^{a,b}, Z(N)^{a}_k: \  a,b\in\Z/N\Z,\ r,s\ge1,\, r+s=k \rangle  }{\Q\langle \mbox{relations\ } \eqref{equ:ZNshuffle} \rangle },
\end{equation*}
where the defining relations are (dropping the dependence on $N$)
\begin{equation} \label{equ:ZNshuffle}
Z_{r,s}^{a,b} + Z_{s,r}^{b,a}+ \gd_{a,b} Z^a_{k} = \sum_{\substack{i+j=k\\
i,j\ge 1}} \left(\binom{i-1}{r-1} Z^{a+b,b}_{i,j} + \binom{i-1}{s-1} Z^{a+b,a}_{i,j}\right).
\end{equation}

Recall that we often choose $0\le a<N$ to represent the residue class congruent to $a$ modulo $N$. Observe that when residue class $\bar 0$ appears in any conditions involving gcd we should use $N$ to represent it. For example, $\gcd(0,0)=\gcd(0,N)=\gcd(N,N)=N$.
We now may define $\PDZ(N)_k$, the formal double zeta space of 
\emph{pure level} $N$ by restricting $Z_{s,r}^{b,a}$ to
$$\gO(N)=\{(a,b): 0\le a,b< N, \gcd(a,b,N)=1\}$$
and $Z^{a}_{k}$ to $\{a: 1\le a< N,\gcd(a,N)=1\}$ in the above. This is well-defined since if $\gcd(a,b,N)=1$ then $\gcd(a+b,a,N)=\gcd(a+b,b,N)=1$.

Since both sides of \eqref{equ:ZNshuffle} are invariant under $(a,b;r,s)\leftrightarrow (b,a;s,r)$
we may just take $r\le s$. Thus for even $k$ the group $\DZ(N)_k$ has $(k-1)N^2+N$ generators and $k N^2/2$ relations. Hence
\begin{equation*}
   \dim \DZ(N)_k \ge \frac{(k-2)N^2+2N}{2}.
\end{equation*}
Similarly, for even $k$ the group $\PDZ(N)_k$ has $(k-1)|\gO(N)|+\gf(N)$ generators and $(k-1)(|\gO(N)|+\gf(N))/2$ relations. Hence
\begin{equation}\label{equ:PDZ(N)bound}
   \dim \PDZ(N)_k \ge \frac{(k-1)(|\gO(N)|-\gf(N))}{2}+\gf(N).
\end{equation}
\begin{rem}
(a). Notice that the double zeta space $\DZ(2)_k$ in \cite{KanekoTa2013}
is our $\PDZ(2)_k$. (b). The bound in \eqref{equ:PDZ(N)bound} is not sharp. For example,
when $(N,k)=(3,4)$ we have 24 generators and only 13 independent relations instead of 15. So
$\dim \PDZ(3)_4=11>24-15.$
\end{rem}

Note that the relations \eqref{equ:PNab} (as well as \eqref{equ:ZNshuffle}) correspond to those in Proposition~\ref{prop:dbsf} when $r,s\ge2$,
under the correspondences
\begin{align*}
&Z(N)_{r,s}^{a,b}\longleftrightarrow \zeta_N^{a,b}(r,s),
\ \ Z(N)^{a}_k\longleftrightarrow \zeta_N^{a}(k), \ \
  P(N)_{r,s}^{a,b}\longleftrightarrow \zeta_N^{a}(r)\zeta_N^{b}(s),
\end{align*}\noindent
the binomial coefficients for $i=1$ on the right vanish in both \eqref{equ:PNab} and \eqref{equ:ZNshuffle}.  For our later applications
it is convenient to allow the ``divergent'' $Z(N)^{a,b}_{1,k-1}$ and $P(N)^{a,b}_{1,k-1}$ etc., and in fact
the double shuffle relations in Proposition~\ref{prop:dbsf} can be extended for $r=1$ or $s=1$ by using a suitable regularization procedure for $Li^{\sha}_{1,k-1}(1,\eta)$ etc.\ developed in \cite{ArakawaKa1999} which was motivated by \cite{IKZ2006}. For a comprehensive treatment of the general multiple zeta values of level $N$,
please see our paper \cite{YuanZh2014b}.
Specifically, in our current situation we can define the following renormalized values.
Let $T$ be a formal variable,
\begin{itemize}
  \item Note that $Li_1^\ast (1)=\zeta_*(1)=T$
and $Li_1^\sha(1)=\zeta_\sha(1)=T$. By \eqref{equ:zetavalLevelNrelLi} and
\eqref{equ:zetavalInt}, for $a\in\Z/N\Z$
 \begin{equation}\label{equ:ZN1regval}
\zeta_{N;\ast}^a(1)=\zeta_{N;\sha}^a(1) =
\frac{1}{N} \left(T+\sum_{\ga=1}^{N-1}  \eta^{-a\ga} Li_1(\eta^\ga)
\right).
\end{equation}

  \item  By \eqref{equ:stuffle}, for $s\ge2$ and $a,b\in\Z/N\Z$
\begin{align*}
\zeta_{N;\ast}^{a,a}(1,s)&=\frac{1}{N}\left(T+\sum_{\ga=1}^{N-1}  \eta^{-a\ga} Li_1(\eta^\ga)
\right)\zeta_N^a(s) -\zeta_N^{a,a}(s,1)-\zeta_N^{a}(s+1) , \\
\zeta_{N;\ast}^{a,b}(1,s)&=\frac{1}{N}\left(T+\sum_{\ga=1}^{N-1}  \eta^{-a\ga} Li_1(\eta^\ga)
\right)\zeta_N^b(s) -\zeta_N^{b,a}(s,1) \text{ if } \quad a\ne b,
\end{align*}

  \item By \eqref{equ:zetavalLevelNrelLi}, for $a,b\in\Z/N\Z$
\begin{align*}
\zeta_{N;\ast}^{a,b}(1,1)=& \frac{1}{N^2}\left(\frac{T^2}2
+\sum_{\gb=1}^{N-1}  \eta^{-b\gb}\big(TLi_{1}(\eta^\gb)-Li_{1,1}(\eta^\gb,\eta^{-\gb})\big) \right.\\  &\hskip3cm \left.+\sum_{\ga=1}^{N-1}\sum_{\gb=1}^{N}  \eta^{-a\ga-b\gb} Li_{1,1}(\eta^\ga,\eta^{\gb-\ga}) \right).
\end{align*}

  \item By \eqref{equ:dblzetavalInt}, for $a,b\in\Z/N\Z$
\begin{equation*}
\zeta_{N;\sha}^{a,b}(1,s)=\frac1{N^2}\sum_{\ga=1}^N \sum_{\gb=1}^N \eta^{-\ga(a-b)-\gb b} Li^{\sha}_{1,s}(\eta^\ga,\eta^{\gb-\ga}) \notag
\end{equation*}
where $Li^\sha_{1,1}(1,1)=\frac12 T^2$,
$Li^\sha_{1,s}(\eta^\ga,\eta^{\gb-\ga}) = Li_{1,s}(\eta^\ga,\eta^{\gb-\ga})$ for all $\ga\ne \bar{0}\in\Z/N\Z$, and
\begin{equation*}
 Li^\sha_{1,s}(1,\eta^{\gb}) =TLi_{s}(\eta^{\gb})-\sum_{t=2}^s Li_{t,s+1-t}(1,\eta^{\gb})-Li_{s,1}(\eta^\gb,\eta^{-\gb}) \quad \forall (s,\gb)\ne (1,\bar{0}).
\end{equation*}
\end{itemize}
The equations in Proposition~\ref{prop:dbsf} are valid for all $r,\,s\ge1$. Here we have used the fact that for
$\ga,\gb\in\Z/N\Z$ ($\ga\ne \bar{0}$) we have
\begin{equation}\label{equ:LiEval}
Li_1(\eta^\ga) = \int_0^1 \frac{\eta^\ga\, dt}{1-\eta^\ga t},\quad Li_{1,1}(\eta^\ga,\eta^{\gb-\ga}) = \int_{1>t_1>t_2>0} \frac{\eta^\ga\, dt_1}{1-\eta^\ga t_1}\frac{\eta^\gb\, dt_2}{1-\eta^\gb t_2}.
\end{equation}

With the above regularized values we can now extend Proposition~\ref{prop:dbsf}
to these cases. For $r\ge 2$ and $s\ge 1$ we set
$\zeta_{N;\ast}^{a} (r)=\zeta_{N;\sha}^{a} (r)=\zeta_N^{a} (r)$
and $\zeta_{N;\ast}^{a,b} (r,s)=\zeta_{N;\sha}^{a,b} (r,s)=\zeta_N^{a,b} (r,s)$.

\begin{prop} \label{prop:dbsfreg}
For positive integers $r, s\ge 1$ and $a,b\in\Z/N\Z$, we have
\begin{align*}
\zeta_{N;\ast}^{a} (r) \zeta_{N;\ast}^{b} (s) = &\zeta_{N;\ast}^{a,b} (r,s) + \zeta_{N;\ast}^{b,a} (s,r) +\gd_{a,b} \zeta_{N;\ast}^{a} (s+r)\\
\zeta_{N;\sha}^{a} (r) \zeta_{N;\sha}^{b} (s)
=&\sum_{\substack{i+j=r+s\\  i, j\ge 1}} \left(\binom{i-1}{r-1} \zeta_{N;\sha}^{a+b,b}(i,j)
    + \binom{i-1}{s-1} \zeta_{N;\sha}^{a+b,a}(i,j)\right),
\end{align*}\noindent
where we set $\binom{0}{0}=1$.
\end{prop}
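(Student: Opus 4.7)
The plan is to treat the stuffle ($\ast$) and shuffle ($\sha$) relations separately, reducing each to Proposition~\ref{prop:dbsf} when $r,s\ge 2$ and handling the new cases $r=1$ or $s=1$ directly from the four bullet-point definitions of the regularized values.

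For the stuffle relation, the first three bullet points are essentially \emph{built} to make the identity hold by construction. When $r=1$ and $s\ge 2$: substituting \eqref{equ:ZN1regval} for $\zeta_{N;\ast}^a(1)$ on the left and the second bullet's defining formula for $\zeta_{N;\ast}^{a,b}(1,s)$ on the right, the two sides agree by cancellation. The symmetric case $s=1$, $r\ge 2$ follows by swapping $(a,b;r,s)$. The only non-trivial case is $r=s=1$. I would expand $\zeta_{N;\ast}^a(1)\zeta_{N;\ast}^b(1)$ via \eqref{equ:ZN1regval}, producing a $T^2/N^2$ piece, two mixed $T\cdot Li_1$ pieces, and a depth-two piece $\tfrac{1}{N^2}\sum_{\ga,\gb=1}^{N-1}\eta^{-a\ga-b\gb}Li_1(\eta^\ga)Li_1(\eta^\gb)$. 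To the last piece I apply the classical stuffle relation $Li_1(\eta^\ga)Li_1(\eta^\gb)=Li_{1,1}(\eta^\ga,\eta^{\gb-\ga})+Li_{1,1}(\eta^\gb,\eta^{\ga-\gb})+\delta_{\ga+\gb\equiv 0\,(N)}Li_2(\eta^\ga)$, and then reindex $\gb\mapsto \gb-\ga$ to match the third-bullet expression for $\zeta_{N;\ast}^{a,b}(1,1)+\zeta_{N;\ast}^{b,a}(1,1)$. The diagonal $Li_2$ contributions collect to $\delta_{a,b}\zeta_N^a(2)$ by the orthogonality $\tfrac{1}{N}\sum_{\ga=1}^N\eta^{(a-b)\ga}=\delta_{a,b}$.

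For the shuffle relation, the starting point is the iterated-integral representation \eqref{equ:dblzetavalInt} extended to the regularized setting by the fourth bullet. The key input is the shuffle product of (possibly regularized) Chen iterated integrals applied to multiple polylogarithms: for every $\ga,\gb\in\Z/N\Z$,
\begin{equation*}
Li_r^{\sha}(\eta^\ga)\,Li_s^{\sha}(\eta^\gb)=\sum_{\substack{i+j=r+s\\ i,j\ge 1}}\left(\binom{i-1}{r-1}Li_{i,j}^{\sha}(\eta^\ga,\eta^{\gb-\ga})+\binom{i-1}{s-1}Li_{i,j}^{\sha}(\eta^\gb,\eta^{\ga-\gb})\right),
\end{equation*}
which follows exactly as in the convergent case since the $\sha$-regularization is defined to respect this identity. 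I would then write $\zeta_{N;\sha}^a(r)=\tfrac{1}{N}\sum_\ga\eta^{-a\ga}Li_r^{\sha}(\eta^\ga)$ via \eqref{equ:zetavalInt}, multiply two such expressions, apply the above identity inside each summand, and finally recognize the resulting double character sum as $\zeta_{N;\sha}^{a+b,b}(i,j)$ and $\zeta_{N;\sha}^{a+b,a}(i,j)$ from \eqref{equ:dblzetavalInt} after the substitution $\gb\mapsto \gb+\ga$, which produces the twist $\eta^{-(a+b)\ga-b(\gb-\ga)}=\eta^{-a\ga-b\gb}$.

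The main obstacle is the careful handling of the boundary characters $\eta^\ga=1$ (i.e.\ $\ga\equiv\bar 0$), since only these indices contribute a $T$ through $Li_1^{\sha}(1)=T$ or through \eqref{equ:ZN1regval}. For shuffle, one must use the inductive definition of $Li_{1,s}^{\sha}(1,\eta^\gb)$ in the fourth bullet to verify that the top-weight term $T\,Li_s(\eta^\gb)$ generated on the left side precisely matches the $T$-carrying regularized double polylogarithms on the right; for stuffle, the analogous point is to confirm that the asymmetric placement of $T$ in the third bullet's definition of $\zeta_{N;\ast}^{a,b}(1,1)$ symmetrizes correctly after adding $\zeta_{N;\ast}^{b,a}(1,1)$ and combining with the $\delta_{a,b}$ diagonal term. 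Once these bookkeeping checks are complete, the rest is direct substitution.
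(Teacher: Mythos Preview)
Your plan is essentially the paper's own: reduce to Proposition~\ref{prop:dbsf} for $r,s\ge 2$ and verify the boundary cases $r=1$ or $s=1$ directly from the bullet-point definitions via the stuffle and shuffle relations among $Li_1$ and $Li_{1,1}$ (the paper in fact leaves these verifications ``to the interested reader''). One correction is needed, however. In the paper's series convention $Li_{r,s}(x_1,x_2)=\sum_{n_1>n_2>0}x_1^{n_1}x_2^{n_2}n_1^{-r}n_2^{-s}$, the stuffle identity reads
\[
Li_1(\eta^\ga)Li_1(\eta^\gb)=Li_{1,1}(\eta^\ga,\eta^\gb)+Li_{1,1}(\eta^\gb,\eta^\ga)+Li_2(\eta^{\ga+\gb}),
\]
with no Kronecker delta and no shift in the second argument. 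The shifted form $Li_{1,1}(\eta^\ga,\eta^{\gb-\ga})$ you wrote belongs to the iterated-integral parametrization of \eqref{equ:dblzetavalInt} and \eqref{equ:LiEval}; that is the right language for the shuffle side (your displayed shuffle identity for $Li_r^\sha Li_s^\sha$ is correct), but mixing it into the series stuffle will prevent the reindexing from matching the third bullet. With the corrected stuffle, the $Li_2(\eta^{\ga+\gb})$ contribution collects to $\gd_{a,b}\zeta_N^a(2)$ only after you extend the double sum to all $\ga,\gb\in\Z/N\Z$ and compensate with the $T$-carrying $\ga=N$ or $\gb=N$ terms --- precisely the bookkeeping you flag at the end. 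Once that convention slip is repaired, the rest of your outline goes through.
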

\begin{proof}
We only need to check the relations in the case when $r=1,s\ge 2$ or
$r\ge 2, s=1$ or $r=s=1$.
These follows directly from the definitions and the stuffle and
shuffle relations among $Li_1$ and $Li_{1,1}$. We leave the
details to the interested reader.
\end{proof}

The following theorem generalizes both a result of \cite{KanekoTa2013}
and a result of \cite{GKZ2006}.
\begin{thm} \label{thm:sumformula}
Let $k$ be a positive even integer and $a\in\Z/N\Z$. Then
\begin{align}
\sum_{1\le r<k,\ r \text{ odd}} Z^{a,a}_{r,k-r}=\frac14\Big(
2Z^{N,a}_{1,k-1}+2Z^{2a,a}_{1,k-1}-Z^a_{k}+2\gd_{a,\bar{0}}Z^N_{k}\Big), \label{equ:sumformulaOdd} \\
\sum_{1< r<k,\ r \text{ even}} Z^{a,a}_{r,k-r}=\frac14\Big(
2Z^{N,a}_{1,k-1}-2Z^{2a,a}_{1,k-1}+Z^a_{k}+2\gd_{a,\bar{0}}Z^N_{k}\Big).  \label{equ:sumformulaEven}
\end{align}
\end{thm}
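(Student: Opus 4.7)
The plan is to introduce the two auxiliary sums
$E_1 := \sum_{r=1}^{k-1} Z^{a,a}_{r,k-r}$ and $E_2 := \sum_{r=1}^{k-1}(-1)^{r-1}Z^{a,a}_{r,k-r}$. Since $k$ is even, odd $r$ and even $r$ give opposite signs in $E_2$, so the theorem is equivalent to the two evaluations
\begin{align*}
E_1 &= Z^{N,a}_{1,k-1}+\gd_{a,\bar{0}}Z^N_k, & E_2 &= Z^{2a,a}_{1,k-1}-\tfrac12 Z^a_k,
\end{align*}
via $S_{\text{odd}}=(E_1+E_2)/2$ and $S_{\text{even}}=(E_1-E_2)/2$.

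For $E_2$ I would take the alternating sum $\sum_{r=1}^{k-1}(-1)^{r-1}$ of \eqref{equ:ZNshuffle} specialized at $b=a$, $s=k-r$. Because $k$ is even, the substitution $r\mapsto k-r$ preserves $(-1)^{r-1}$, so the two $Z^{a,a}$-terms on the left combine into $2E_2$, while the constant $Z^a_k$ contributes once (as $\sum_{r=1}^{k-1}(-1)^{r-1}=1$ when $k-1$ is odd). On the right one uses the elementary identity $\sum_{m=0}^{i-1}(-1)^m\binom{i-1}{m}=\gd_{i,1}$, valid because $i-1\le k-2$; the companion sum with $\binom{i-1}{s-1}$ gives the same value after the change of variables $m=k-r-1$ (again exploiting parity of $k$). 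Hence only the $i=1$ contribution survives, yielding $2E_2+Z^a_k=2Z^{2a,a}_{1,k-1}$.

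The formula for $E_1$ is more delicate, because simply summing the $(a,a)$-shuffle relation over $r$ introduces unwanted $2^{i}$ weights on the right. Instead I would specialize \eqref{equ:ZNshuffle} at $(r,s,a,b)=(1,k-1,\bar{0},a)$. With $a+b=a$, the factor $\binom{i-1}{r-1}=\binom{i-1}{0}=1$ turns the first inner sum on the right into $\sum_{i+j=k}Z^{a,a}_{i,j}=E_1$, while $\binom{i-1}{k-2}$ is nonzero only at $i=k-1$ and contributes a single $Z^{a,\bar{0}}_{k-1,1}$ that exactly cancels the corresponding term on the left. What remains is $E_1 = Z^{\bar{0},a}_{1,k-1}+\gd_{\bar{0},a}Z^{\bar{0}}_k$, which is the desired identity under the convention $\bar{0}=N$.

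The main obstacle is spotting this particular specialization of the double shuffle relation, with first residue $\bar{0}$, that makes the binomial coefficients collapse precisely onto $E_1$; the rest is routine binomial bookkeeping. Once $E_1$ and $E_2$ are in hand, half their sum and half their difference, rewritten with common denominator $4$, yield \eqref{equ:sumformulaOdd} and \eqref{equ:sumformulaEven} simultaneously.
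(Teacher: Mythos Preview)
Your proof is correct and follows essentially the same route as the paper. The paper packages the computation via the generating function $\mathcal{Z}^{a,b}_k(X,Y)=\sum_{r+s=k}Z^{a,b}_{r,s}X^{r-1}Y^{s-1}$ and evaluates the resulting functional equation at $(X,Y)=(1,0)$ and $(X,Y)=(1,-1)$; the first evaluation is exactly your specialization $(r,s,a,b)=(1,k-1,\bar 0,a)$ (up to the harmless symmetry $(r,s;a,b)\leftrightarrow(s,r;b,a)$), and the second is precisely your alternating sum $\sum_r(-1)^{r-1}$ of \eqref{equ:ZNshuffle} at $a=b$. Both then recombine $E_1$ and $E_2$ in the same way. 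The only difference is cosmetic: the generating-function language makes the binomial collapse automatic (since $X+Y=0$ forces the right-hand side to reduce to the $i=1$ term), whereas you verify it by the explicit identity $\sum_m(-1)^m\binom{i-1}{m}=\gd_{i,1}$.
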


\begin{proof}
Consider the generating functions
\begin{equation*}
\ZZ^{a,b}_k (X,Y)  = \sum_{r+s=k} Z_{r,s}^{a,b} X^{r-1} Y^{s-1}.
\end{equation*}
By \eqref{equ:ZNshuffle} we see that
\begin{equation*}
\ZZ^{a,b}_k (X,Y)  + \ZZ^{b,a}_k (Y,X) + \gd_{a,b} Z^a_{k}\frac{X^{k-1}-Y^{k-1}}{X-Y} =
\ZZ^{a+b,b}_k (X+Y,Y)  + \ZZ^{a+b,a}_k (X+Y,X) .
\end{equation*}
Set $(X,Y)=(1,0)$ and then $(X,Y)=(1,-1)$ we get, respectively,
\begin{align}
Z^{a,b}_{k-1,1}+Z^{b,a}_{1,k-1}+ \gd_{a,b} Z^a_{k} =& Z^{a+b,b}_{k-1,1}+\sum_{r=1}^{k-1} Z^{a+b,a}_{r,k-r}, \label{equ:X=1,Y=0}\\
\sum_{r=1}^{k-1} (-1)^{r-1}\big( Z^{a,b}_{r,k-r}+Z^{b,a}_{r,k-r}\big)+ \gd_{a,b} Z^a_{k} =& Z^{a+b,b}_{1,k-1}+Z^{a+b,a}_{1,k-1}. \label{equ:X=1,Y=-1}
\end{align}\noindent
Setting $b=N$  in \eqref{equ:X=1,Y=0} and $a=b$ in \eqref{equ:X=1,Y=-1} we get
\begin{align}
 \sum_{r=1}^{k-1} Z^{a,a}_{r,k-r}=&  Z^{N,a}_{1,k-1}+\gd_{a,\bar{0}} Z^N_k, \label{equ:X=1,Y=02}\\
2\sum_{r=1}^{k-1} (-1)^{r-1} Z^{a,a}_{r,k-r}+Z^a_{k} =& 2Z^{2a,a}_{1,k-1}. \label{equ:X=1,Y=-12}
\end{align}\noindent
By adding (resp.\ subtracting) twice of \eqref{equ:X=1,Y=02} to
(resp.\ from) \eqref{equ:X=1,Y=-12} we obtain \eqref{equ:sumformulaOdd}
and \eqref{equ:sumformulaEven}.
\end{proof}
\begin{rem} Part 1) of \cite[Theorem 1]{KanekoTa2013} follows from the special case
of $N=2$ and $a=1$ of our theorem. By taking $N=a=1$ in the theorem we obtain \cite[Theorem 1]{GKZ2006}.
\end{rem}

Next we describe the linear relations among $Z^{a,b}_{i,j}$'s
using some homogeneous polynomials.
\begin{prop}\label{prop:polynomialRelation}
Let $k\ge 2$ be a positive integer. Let $c_{i,j}^{a,b}\in\Q$ for all $i,j\in\N$ and $a,b\in\Z/N\Z$. Then
the following two statements are equivalent:
\begin{enumerate}
  \item[\upshape{(i)}] The relation
\begin{equation*}
    \sum_{0\le a\le b<N} \sum_{i+j=k} c_{i,j}^{a,b} Z^{a,b}_{i,j} \equiv 0
        \pmod{\Q \langle Z^a_{k}: a\in\Z/N\Z \rangle}
\end{equation*}
holds in $\DZ(N)_k$. Here and in the rest of the paper, $\sum_{i+j=k}$ means $\sum_{i+j=k,i,j\ge 1}$.

\item[\upshape{(ii)}] There exist some homogeneous polynomials $F_{a,b}\in\Q[X,Y]$ ($0\le a\le b<N$)
of degree $k-2$ such that
\begin{multline}\label{equ:F=cPureCase}
    \sum_{0\le a\le b<N} F_{a,b}(X_b,Y_a)+F_{a,b}(Y_b,X_a)-F_{a,b}(X_{a+b}+Y_b,X_{a+b})\\
    -F_{a,b}(X_{a+b},X_{a+b}+Y_a)= \sum_{a,b\in\Z/N\Z} \sum_{i+j=k} \binom{k-2}{i-1} c_{i,j}^{a,b} X_a^{i-1} Y_b^{j-1}.
\end{multline}
\end{enumerate}
Further, the following two statements are equivalent:
\begin{enumerate}
  \item[\upshape{(iii)}] The relation
\begin{equation*}
    \sum_{a,b\in\gO(N), a\le b} \sum_{i+j=k} c_{i,j}^{a,b} Z^{a,b}_{i,j} \equiv 0
        \pmod{\Q \langle Z^a_{k}: 1\le a<N, \gcd(a,N)=1 \rangle}
\end{equation*}
holds in $\PDZ(N)_k$.

\item[\upshape{(iv)}] There exist some homogeneous polynomials $F_{a,b}\in\Q[X,Y]$ ($a,b\in\gO(N)$ and $a\le b$)
of degree $k-2$ such that
\begin{multline}\label{equ:F=c}
    \sum_{a,b\in\gO(N), a\le b} F_{a,b}(X_b,Y_a)+F_{a,b}(Y_b,X_a)-F_{a,b}(X_{a+b}+Y_b,X_{a+b})\\
    -F_{a,b}(X_{a+b},X_{a+b}+Y_a)= \sum_{a,b\in\gO(N)} \sum_{i+j=k} \binom{k-2}{i-1} c_{i,j}^{a,b} X_a^{i-1} Y_b^{j-1}.
\end{multline}
\end{enumerate}
\end{prop}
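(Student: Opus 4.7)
The plan is to exploit the fact that $\DZ(N)_k$ is, by definition, the quotient of the free $\Q$-vector space on its generators by the subspace spanned by the defining relations~\eqref{equ:ZNshuffle}. Consequently, any congruence $\sum c^{a,b}_{i,j} Z^{a,b}_{i,j} \equiv 0$ modulo the $Z^a_k$'s must arise from a $\Q$-linear combination of those relations, and I would parameterize such combinations by test polynomials $F_{a,b}(X,Y)$ of degree $k-2$: the coefficient of $X^{r-1}Y^{s-1}$ in $F_{a,b}$ serves as the scalar weight on the defining relation~\eqref{equ:ZNshuffle} with parameters $(a,b;r,s)$.

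The first concrete step is to repackage~\eqref{equ:ZNshuffle} as a generating-function identity exactly as in the proof of Theorem~\ref{thm:sumformula}. Setting $\ZZ^{a,b}_k(X,Y)=\sum_{r+s=k} Z^{a,b}_{r,s} X^{r-1}Y^{s-1}$, the relation takes the form
\begin{equation*}
\ZZ^{a,b}_k(X,Y)+\ZZ^{b,a}_k(Y,X)+\gd_{a,b}Z^a_k\tfrac{X^{k-1}-Y^{k-1}}{X-Y}=\ZZ^{a+b,b}_k(X+Y,Y)+\ZZ^{a+b,a}_k(X+Y,X),
\end{equation*}
via the elementary identity $\sum_{r}\binom{i-1}{r-1}X^{r-1}Y^{s-1}=Y^{j-1}(X+Y)^{i-1}$, which is the very same mechanism that later will produce the shifted evaluations on the left of~\eqref{equ:F=cPureCase}.

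Next, given a family $\{F_{a,b}\}$ of degree-$(k-2)$ test polynomials, I would form the linear combination $\sum_{a\le b}\sum_{r+s=k}[\text{coeff of }X^{r-1}Y^{s-1}\text{ in }F_{a,b}]\cdot\phi(a,b;r,s)$, where $\phi(a,b;r,s)$ denotes relation~\eqref{equ:ZNshuffle}, and compute the resulting coefficient of each generator $Z^{A,B}_{I,J}$. The contributions partition into two types. The ``LHS pieces'', coming from the terms $Z^{a,b}_{r,s}$ and $Z^{b,a}_{s,r}$ in $\phi(a,b;r,s)$, assemble into $F_{a,b}(X_b,Y_a)+F_{a,b}(Y_b,X_a)$ once one reads off coefficients in the multivariate monomials $X_\bullet^{i-1}Y_\bullet^{j-1}$. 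The ``RHS pieces'', coming from the binomial sums on the right of~\eqref{equ:ZNshuffle}, repackage via the same $(X+Y)^{i-1}$ expansion into $F_{a,b}(X_{a+b}+Y_b,X_{a+b})$ and $F_{a,b}(X_{a+b},X_{a+b}+Y_a)$. The factor $\binom{k-2}{i-1}$ on the right of~\eqref{equ:F=cPureCase} then emerges naturally from re-expressing the resulting polynomial in the monomial basis $X_a^{i-1}Y_b^{j-1}$.

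With this dictionary, (ii)$\Rightarrow$(i) is immediate, since~\eqref{equ:F=cPureCase} records that the specified combination of defining relations equals $\sum c^{a,b}_{i,j}Z^{a,b}_{i,j}$ modulo the $Z^a_k$'s, so the latter vanishes in $\DZ(N)_k$. Conversely, (i)$\Rightarrow$(ii) holds because every element of the defining-relations subspace is produced in this way by some $\{F_{a,b}\}$. The same argument, verbatim, establishes (iii)$\Llra$(iv); here the closure observation $\gcd(a,b,N)=1\Rightarrow\gcd(a+b,a,N)=\gcd(a+b,b,N)=1$ already noted in the paper guarantees that the defining relations preserve the pure-level condition, so the parameterization by polynomials $F_{a,b}$ with $(a,b)\in\gO(N)$ is exhaustive. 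I expect the main obstacle to be the bookkeeping in the third paragraph: carefully tracking the residue-class labels on the variables $X_\bullet,Y_\bullet$ through all four substitutions, and verifying that the binomial coefficients in~\eqref{equ:ZNshuffle} combine exactly to produce those in~\eqref{equ:F=cPureCase} after the index shifts $(a,b)\mapsto(a+b,a)$ and $(a,b)\mapsto(a+b,b)$.
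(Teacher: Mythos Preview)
Your proposal is correct and follows essentially the same approach as the paper: parameterize $\Q$-linear combinations of the defining relations~\eqref{equ:ZNshuffle} by homogeneous polynomials $F_{a,b}$ (the paper does this by checking the single-monomial case $F_{a,b}(X,Y)=\binom{k-2}{r-1}X^{r-1}Y^{s-1}$ and extending linearly), and then observe that any congruence as in (i) must arise this way by the very definition of $\DZ(N)_k$. Your write-up is in fact more explicit than the paper's about how the four substitutions in~\eqref{equ:F=cPureCase} arise from the generating-function repackaging of~\eqref{equ:ZNshuffle}, but the underlying argument is the same.
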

\begin{proof} For any fixed $a,b\in \Z/N\Z$ we take $F_{a,b}(X,Y)=\binom{k-2}{r-1}X^{r-1}Y^{s-1}$ ($r+s=k$)
and $F_{c,d}(X,Y)=0$ for all $(c,d)\ne(a,b)$. Then the expansion of the left hand side of  \eqref{equ:F=c}
determines the values $c_{i,j}^{a,b}$ uniquely such that \eqref{equ:ZNshuffle} holds which implies (i).
In fact, when $a\ne b$
we obtain an exact equation in (i). Since any relation of the form in (i) in $\DZ(N)_k$ should come from a linear combination of \eqref{equ:ZNshuffle} with various choices of $(a,b)\in (\Z/N\Z)^2$ modulo $\Q \langle Z^a_{k}: a\in\Z/N\Z \rangle$ and any homogeneous polynomial is a linear combination of monomials of the form $F_{a,b}(X,Y)$,
the equivalence of (i) and (ii) follows immediately.
Similar arguments clearly shows the equivalence of (iii) and (iv).
\end{proof}

\begin{rem}
Proposition~\ref{prop:polynomialRelation} generalizes \cite[Proposition~2.2(i)(ii)]{GKZ2006} and \cite[Lemma 1]{KanekoTa2013}. In fact, when $N=2$ we can take $(a,b)=(0,1),(1,1)$ in (iii) and (iv) of Proposition~\ref{prop:polynomialRelation} then we see that
$F=F_{0,1}$ and $G=F_{1,1}$ in \cite[Proposition~2.2(ii)]{GKZ2006} with relabeling of $X$'s and $Y$'s as follows: $X_0^i Y_1^j\to X_1^i Y_1^j$, $X_1^i Y_0^j\to X_2^i Y_2^j$, and
$X_1^i Y_1^j\to X_3^i Y_3^j$.
\end{rem}

\section{Fourier series expansion of the double Eisenstein series at level $N$}
In this section we will describe a procedure to find the Fourier series expansion
of double Eisenstein series. This can be generalized to larger depths.
Similar to the notation used in \cite{Bachmann2012} and \cite{GKZ2006},
for any $a\in \Z/N\Z$ and positive integer $s$ set
\begin{align*}
\Psi^a_s(\tau)=&\, \Psi^{a;N}_s(\tau) = \sum_{c\equiv a\ppmod{N},
    c\in \Z}\frac{1}{(\tau+ c)^{s}} \quad \forall s\ge 2,\\
\Psi^a_1(\tau)=&\, \Psi^{a;N}_1(\tau) = \lim_{M\to\infty}
    \sum_{c\equiv a\ppmod{N}, |c|<M}\frac{1}{\tau+ c}.
\end{align*}\noindent
Then we have
\begin{lem}\label{lem:Psi}
Let $q=e^{2\pi i \tau}$ and $\eta=\eta_N=\exp(2\pi i/N)$.
Then for any $a\in \Z/N\Z$ and $s\in \N$, we have
\begin{equation}\label{equ:Psi}
\Psi^a_s(N\tau) =
\left\{
  \begin{array}{ll}
{\displaystyle -\frac{\pi i}{N}-\frac{2\pi i}{N}\sum_{n\ge 1} \eta^{a n} q^n}, & \hbox{if $s=1$;} \\
{\displaystyle \frac{(-2\pi i)^s}{N^s(s-1)!}\sum_{n\ge 1}  n^{s-1}\eta^{a n} q^n}, & \hbox{if $s\ge 2$.}
  \end{array}
\right.
\end{equation}
\end{lem}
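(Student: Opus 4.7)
The plan is to reduce everything to the classical Lipschitz formula (equivalently, the $q$-expansion of $\pi\cot(\pi\tau)$ and its derivatives) by parametrizing the residue class $c\equiv a\ppmod{N}$ as $c=a+Nm$ with $m\in\Z$. After this substitution, an overall factor of $N^{-s}$ comes out, the argument becomes $\tau+a/N+m$, and the sum runs over all $m\in\Z$, which is exactly the shape to which Lipschitz applies.

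First I would handle the case $s\ge 2$. Writing
\[
\Psi^a_s(N\tau)=\sum_{m\in\Z}\frac{1}{(N\tau+a+Nm)^s}
    =\frac{1}{N^s}\sum_{m\in\Z}\frac{1}{(\tau+a/N+m)^s},
\]
the classical Lipschitz summation formula
\[
\sum_{m\in\Z}\frac{1}{(z+m)^s}=\frac{(-2\pi i)^s}{(s-1)!}\sum_{n\ge 1}n^{s-1}e^{2\pi i n z}\qquad(s\ge 2,\ z\in\HHH)
\]
applied to $z=\tau+a/N$ yields $e^{2\pi i n z}=\eta^{an}q^n$, and the stated formula for $\Psi^a_s(N\tau)$ follows immediately.

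For $s=1$, convergence is only conditional, so I must respect the symmetric cutoff $|c|<M$ in the definition. After the substitution $c=a+Nm$ (with $0\le a<N$) the condition $|a+Nm|<M$ is, up to an $O(1)$ boundary, the symmetric range $|m|\le M/N$, which in the limit gives the principal-value sum that defines $\pi\cot(\pi(\tau+a/N))$. Hence
\[
\Psi^a_1(N\tau)=\frac{1}{N}\,\pi\cot\!\bigl(\pi(\tau+a/N)\bigr),
\]
and substituting the standard $q$-expansion $\pi\cot(\pi z)=-\pi i-2\pi i\sum_{n\ge 1}e^{2\pi i n z}$ (valid for $\im z>0$) with $z=\tau+a/N$ produces exactly the first line of \eqref{equ:Psi}.

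The only subtle step is the $s=1$ case, where I must justify that the boundary terms arising from the mismatch between $|c|<M$ and $|m|\le M/N$ vanish as $M\to\infty$; this is a short estimate using the pairing $1/(N\tau+c)+1/(N\tau-c)=O(c^{-2})$ for large $|c|$ to absorb the asymmetric tail. Everything else is bookkeeping on the Lipschitz formula, and no deeper input is required.
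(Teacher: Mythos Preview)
Your proof is correct and follows essentially the same route as the paper: substitute $c=a+Nm$, pull out the factor $N^{-s}$, and apply the Lipschitz formula at $z=\tau+a/N$ for $s\ge 2$, while for $s=1$ reduce to the cotangent partial-fraction expansion and then insert its $q$-expansion. The only cosmetic difference is that the paper invokes a residue-theoretic ``Summation Theorem'' to pass from the symmetric cutoff in $c$ to the symmetric cutoff in $m$, whereas you dispatch the same boundary discrepancy directly via the $O(c^{-2})$ pairing; both arguments serve the identical purpose.
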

\begin{proof}
The well-known Lipschitz formula implies that for all $k\ge 2$
\begin{equation*}
\sum_{n\in \Z} \frac{1}{(x+ n)^k}= \frac{(-2\pi i)^k}{(k-1)!}
    \sum_{n\ge 1} n^{k-1}  e^{2 \pi i nx}.
\end{equation*}
Thus by setting $x=\tau+a/N$ we get
\begin{equation*}
\Psi^a_k(N\tau) =\sum_{n\in \Z}\frac{1}{(N\tau+nN+a)^k}
=\frac{(-2\pi i)^k}{N^k(k-1)!}\sum_{n\ge 1} n^{k-1} \eta^{a n} e^{2 \pi i nx},
\end{equation*}
as desired.

Now we deal with the special case when $s=1$. In the Summation Theorem \cite[p.~305]{MarsdenHo1984}
we take $f(z)=1/(z+\tau')$ where $\tau'=(\tau+a)/N$. Then we get
\begin{equation*}
   \lim_{M\to\infty} \sum_{c\equiv a\ppmod{N}, |c|<M} \frac{1}{\tau+ c}
   =   \frac{1}{N}\lim_{M\to\infty} \sum_{n=-M}^M \frac{1}{\tau'+n}
   = -{\rm Res}_{z=-\tau'} \left(\frac{\pi \cot \pi z}{z+\tau'}\right).
\end{equation*}
Since
\begin{equation*}
\cot \pi z=(-i)\frac{1+e^{2\pi i z}}{1-e^{2\pi i z}}=-i-2i\sum_{n\ge 1} e^{2\pi i n z}
\end{equation*}
we have
\begin{equation*}
\Psi^a_1(N\tau)=\frac{\pi}{N} \cot \Big[\pi \Big(\tau+\frac{a}{N}\Big)\Big]
=-\frac{\pi i}{N}-\frac{2\pi i}{N}\sum_{n\ge 1} \eta^{a n} q^n.
\end{equation*}
This completes the proof of the lemma.
\end{proof}

\begin{cor}\label{cor:productPsi}
For any $\bfa=(a_1,\dots,a_d)\in(\Z/N\Z)^d$ and $\bfs=(s_1,\dots,s_d)\in\N^d$ set
\begin{equation}\label{defn:g}
g_\bfs^\bfa(\tau)=g_\bfs^{\bfa;N}(\tau)=\sum_{m_1>\dots>m_d>0} \prod_{j=1}^d \Psi^{a_j}_{s_j}(m_jN\tau).
\end{equation}
Then we have
\begin{equation}\label{equ:g}
g_\bfs^\bfa(\tau) = \frac{(-2\pi i)^{|\bfs|}}{N^{|\bfs|} (\bfs-1)!}\sum_{n=1}^{\infty} \gs_{\bfs-\bfone}^\bfa(n) q^n,
\end{equation}
where $|\bfs|=s_1+\cdots+s_d$, $(\bfs-1)!=\prod_{j=1}^d (s_j-1)!$,  and $\bfs-\bfone=(s_1-1,\dots,s_d-1)$.
\end{cor}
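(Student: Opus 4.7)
The plan is a direct computation: substitute the Fourier expansion of $\Psi^{a}_{s}(N\tau)$ supplied by Lemma \ref{lem:Psi} into the definition \eqref{defn:g} of $g_\bfs^\bfa$, and then rearrange the resulting multiple $q$-series. First, for any $m_j\ge 1$ (and assuming $s_j\ge 2$, so that everything converges absolutely), applying Lemma \ref{lem:Psi} with $\tau$ replaced by $m_j\tau$, so that $q$ is replaced by $q^{m_j}$, gives
\[
\Psi^{a_j}_{s_j}(m_j N\tau)=\frac{(-2\pi i)^{s_j}}{N^{s_j}(s_j-1)!}\sum_{v_j\ge 1}v_j^{s_j-1}\eta^{a_jv_j}q^{m_jv_j}.
\]

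Next, taking the product over $j=1,\dots,d$ factors out the common scalar and produces
\[
\prod_{j=1}^{d}\Psi^{a_j}_{s_j}(m_jN\tau)=\frac{(-2\pi i)^{|\bfs|}}{N^{|\bfs|}(\bfs-1)!}\sum_{v_1,\dots,v_d\ge 1}\Big(\prod_{j=1}^{d}v_j^{s_j-1}\eta^{a_jv_j}\Big)q^{m_1v_1+\cdots+m_dv_d}.
\]
Summing over $m_1>\cdots>m_d>0$, interchanging the summations in the $m_j$'s and the $v_j$'s, and collecting terms by the common total exponent $n=m_1v_1+\cdots+m_dv_d$, one identifies $u_j=m_j$ in the definition \eqref{equ:Mdivisor} of the multiple divisor function at level $N$ and recognizes the coefficient of $q^n$ as exactly $\gs^\bfa_{\bfs-\bfone}(n)$. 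This yields the stated Fourier expansion.

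The only point that is not purely mechanical is justifying the interchange of summations. On any compact subset of $\HHH$ we have $|q|\le \rho<1$, so each inner sum $\sum_{v_j\ge 1}v_j^{s_j-1}|q|^{m_jv_j}$ converges absolutely and is bounded by a constant times $|q|^{m_j}/(1-\rho)^{s_j}$, which decays geometrically in $m_j$. The iterated sum over $(m_1,\dots,m_d)$ with $m_1>\cdots>m_d>0$ is therefore absolutely convergent, so Fubini's theorem justifies rearranging the series in any desired order. Given this, no genuine obstacle remains and the stated identity follows.
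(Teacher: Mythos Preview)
Your argument is correct and is exactly the direct computation the paper has in mind; the paper gives no proof beyond labeling this a corollary of Lemma~\ref{lem:Psi}. One small remark: the corollary is stated for all $s_j\ge 1$, but, as your parenthetical restriction to $s_j\ge 2$ correctly anticipates, when $s_1=1$ the constant term $-\pi i/N$ in $\Psi^a_1$ makes the series \eqref{defn:g} divergent, so in that range \eqref{equ:g} should be read as the \emph{definition} of $g_\bfs^\bfa$---a point the paper leaves implicit but relies on throughout (e.g.\ in the generating series $\tg^a(X)$ and $\tg^{a,b}(X,Y)$).
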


It is now easy to decompose the level $N$ Eisenstein series (of one variable)
into the following form:
\begin{equation}\label{equ:1varEisenstein}
     G_r^a(\tau)=\zeta_N^a(r)+g_r^a(\tau)
\end{equation}
for any positive integer $r\ge 3$ and $a\in\Z/N\Z$. So we can
define two extensions of $G_r^a(\tau)$ as follows.
\begin{defn}\label{defn:extendG}
Let $\sharp=\sha$ or $\ast$. For all $s\ge 1$, we define
\begin{equation}\label{equ:extendG}
G^{a}_{s;\sharp}(\tau)=\zeta_{N;\sharp}^a(s)+g_s^a(\tau).
\end{equation}
\end{defn}
Notice that the definition is independent of whether $\sharp=\sha$ or $\ast$
by \eqref{equ:ZN1regval}.

The following theorem is the key to the double shuffle relations
satisfied by the double Eisenstein series at level $N$.

\begin{thm} \label{thm:fourierexpansion}
The Fourier series expansion of $G_{r,s}^{a,b}(\tau)$ for 
$r\geq 3, s\geq 2$ is given by
\begin{multline}\label{equ:expressGbyZetaAndg}
G_{r,s}^{a,b}(\tau)= \zeta^{a,b}_{N}(r, s)+  g^{a}_{r}(\tau)\zeta^b_{N}(s)
    + g_{r,s}^{a,b}(\tau)\\
+ \sum_{\substack{h+p=r+s\\ h\geq 1, p\geq \min\{r,s\}}}  \zeta^{a-b}_{N}(p)\left[(-1)^s\binom{p-1}{s-1} g^a_{h}(\tau)
    + (-1)^{p-r}\binom{p-1}{r-1} g^b_{h}(\tau)\right] .
\end{multline}
\end{thm}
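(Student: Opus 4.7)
My plan is to decompose the lattice sum defining $G_{r,s}^{a,b}(\tau)$ according to the four configurations of $(m_1, m_2) \in \Z_{\geq 0}^2$ consistent with the chain $m_1 N\tau + c_1 \succ m_2 N\tau + c_2 \succ 0$: namely (i) $m_1 > m_2 > 0$, (ii) $m_1 > m_2 = 0$ with $c_2 > 0$, (iii) $m_1 = m_2 > 0$ with $c_1 > c_2$, and (iv) $m_1 = m_2 = 0$ with $c_1 > c_2 > 0$. The full series is absolutely convergent for $r \geq 3$ and $s \geq 2$, so all rearrangements and interchanges below are legitimate.

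The contributions from cases (i), (ii), and (iv) are immediate. In case (i), first summing over $c_1 \equiv a$ and $c_2 \equiv b$ modulo $N$ produces $\Psi^a_r(m_1 N\tau) \Psi^b_s(m_2 N\tau)$, and then summing over $m_1 > m_2 > 0$ recovers $g_{r,s}^{a,b}(\tau)$ by \eqref{defn:g}. Case (ii) factors as $g^a_r(\tau) \cdot \zeta_N^b(s)$, and case (iv) is by definition $\zeta_N^{a,b}(r, s)$.

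The substantive part is case (iii). Setting $e = c_1 - c_2 > 0$ (so that $e \equiv a - b \pmod N$ while $c_2$ still runs over integers $\equiv b \pmod N$) and writing $v = mN\tau + c_2$, I apply the standard partial fraction decomposition
\begin{equation*}
\frac{1}{(v+e)^r v^s} = \sum_{\substack{h+p=r+s\\ h\geq 1,\, p\geq s}} \frac{(-1)^s \binom{p-1}{s-1}}{e^p (v+e)^h} + \sum_{\substack{h+p=r+s\\ h\geq 1,\, p\geq r}} \frac{(-1)^{p-r} \binom{p-1}{r-1}}{e^p\, v^h},
\end{equation*}
derived by repeatedly differentiating $1/v^s$ at $v = -e$ and $1/(v+e)^r$ at $v = 0$. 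Summing over $m > 0$ and $c_2 \equiv b \pmod N$ converts $\sum 1/(v+e)^h$ into $g^a_h(\tau)$ (since $c_2 + e \equiv a$) and $\sum 1/v^h$ into $g^b_h(\tau)$, while summing $1/e^p$ over $e>0$, $e \equiv a-b \pmod N$, gives $\zeta_N^{a-b}(p)$. Because $\binom{p-1}{s-1}$ vanishes when $p < s$ and $\binom{p-1}{r-1}$ vanishes when $p < r$, the two sub-sums may be merged under the single range $p \geq \min\{r,s\}$, reproducing the bracketed expression in \eqref{equ:expressGbyZetaAndg}.

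The main obstacle is essentially sign-tracking in the partial fraction expansion and checking that each resulting piece converges. The hypothesis $r\ge 3$, $s\ge 2$ gives $p \geq \min\{r,s\} = s \geq 2$, so $\zeta_N^{a-b}(p)$ converges (even when $a = b$), and $g^a_h(\tau)$ is well-defined for every $h \geq 1$ via the Lipschitz-type expansion of Lemma~\ref{lem:Psi}. Adding the four pieces yields precisely \eqref{equ:expressGbyZetaAndg}.
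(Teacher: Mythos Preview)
Your proof is correct and follows essentially the same four-case decomposition and partial-fraction argument as the paper's proof (which in turn follows \cite{GKZ2006}). One trivial slip: you write ``$p \geq \min\{r,s\} = s \geq 2$'', but $\min\{r,s\}$ need not equal $s$; the conclusion $p \ge 2$ is still valid since $r\ge 3$ and $s\ge 2$ force $\min\{r,s\}\ge 2$.
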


\begin{proof}
Our proof follows the lines of that of \cite[Theorem 6]{GKZ2006}. We decompose $G_{r,s}^{a,b}(\tau)$ into the sum of the following four types: (1) $m=n=0,$ (2) $m>n=0,$ (3) $m=n>0,$ and (4) $m>n >0.$

(1) $m=n=0$. It gives rise to exactly
\begin{equation*}
\sum_{\substack{c>d>0\\ c\equiv a,\, d\equiv b\ppmod{N}}} \frac{1}{c^rd^s} = \zeta^{a,b}_{N}(r, s).
\end{equation*}

(2) $m>n=0$. We are looking at
\begin{equation*}
\sum_{\substack{m>0,\ d>0 \\ c\equiv a,\, d\equiv b\ppmod{N}}} \frac{1}{(mN\tau+c)^rd^s}
= \sum_{m>0}\Psi_r^a(mN\tau) \sum_{\substack{ d>0 \\ d\equiv b\ppmod{N}}} \frac{1}{d^s}
=g^a_{r}(\tau) \zeta^b_N(s).
\end{equation*}

(3) $m=n>0$. Then we write
\begin{equation*}
\sum_{\substack{m>0\\ c\equiv a,\, d\equiv b\ppmod{N}}}\frac{1}{(mN\tau +c)^r(mN\tau + d)^s}= \sum_{m>0} \Psi_{r,s}^{a,b}(mN\tau).
\end{equation*}
Next we compute $\Psi_{r,s}^{a,b}(\tau).$ Using the partial fraction
\begin{equation*}
\frac{1}{(\tau +c)^r(\tau + d)^s} =\sum_{\substack{h+p=r+s\\ h\geq 1, p\geq \min\{r,s\}}} \left[\frac{(-1)^s\binom{p-1}{s-1}}{(c-d)^p(\tau+c)^h}+\frac{(-1)^{p-r}\binom{p-1}{r-1}}
{(c-d)^p(\tau+d)^h}\right],
\end{equation*}
we obtain
\begin{multline*}
\Psi_{r,s}^{a,b}(\tau) = \sum_{\substack{c>d\\ c\equiv a,\, d\equiv b\ppmod{N}}} \frac{1}{(\tau+c)^r(\tau+d)^s}\\
= \sum_{\substack{h+p=r+s,\, c>d\\ c\equiv a,\, d\equiv b\ppmod{N}}}\left[(-1)^s\binom{p-1}{s-1}\frac{1}{(c-d)^p(\tau+c)^h} + (-1)^{p-r}\binom{p-1}{r-1}\frac{1}{(c-d)^p(\tau+d)^h}\right]\\
= \sum_{\substack{h+p=r+s\\ h\geq 1, p\geq \min\{r,s\}}}\zeta^{a-b}_N(p)\left[(-1)^s\binom{p-1}{s-1} \Psi^a_h(\tau)
    + (-1)^{p-r}\binom{p-1}{r-1} \Psi^b_h(\tau)\right].
\end{multline*}
Hence
\begin{multline*}
\sum_{\substack{m>0\\ c\equiv a,\, d\equiv b\ppmod{N}}}\frac{1}{(mN\tau +c)^r(mN\tau + d)^s}
    = \sum_{m>0} \Psi_{r,s}^{a,b}(mN\tau)\\
=\sum_{\substack{h+p=r+s\\ h\geq 1, p\geq \min\{r,s\}}}\zeta^{a-b}_{N}(p)\left[(-1)^s\binom{p-1}{s-1} g^a_{h}(\tau)
    + (-1)^{p-r}\binom{p-1}{r-1} g^b_{h}(\tau)\right] .
\end{multline*}
Here the special case $h=1$ has to be treated carefully by using \eqref{equ:Psi}.

(4) $m>n>0$.  We have
\begin{equation*}
 \sum_{\substack{m>n>0\\ c\equiv a,\, d\equiv b\ppmod{N}}}\frac{1}{(mN\tau +c)^r(nN\tau + d)^s}
 = g_{r,s}^{a,b}(\tau) .
\end{equation*}
The theorem follows by summing up the above four parts.
\end{proof}

Motivated by \eqref{equ:expressGbyZetaAndg} we now have
the extension of the double Eisenstein series of level $N$
to following regularized form.
\begin{defn}\label{defn:extendGrsab}
Let $\sharp=\sha$ or $\ast$. Then for all $r,s\ge 1$,
we define
\begin{multline}\label{equ:extenddblG}
G^{a,b}_{r,s;\sharp}(\tau)= \zeta^{a,b}_{N;\sharp}(r, s)+  g^{a}_{r}(\tau)\zeta^b_{N;\sharp}(s)
    + g_{r,s}^{a,b}(\tau)\\
+ \sum_{\substack{h+p=r+s\\ h\geq 1, p\geq \min\{r,s\}}}  \zeta^{a-b}_{N;\sharp}(p)\left[(-1)^s\binom{p-1}{s-1} g^a_{h}(\tau)
    + (-1)^{p-r}\binom{p-1}{r-1} g^b_{h}(\tau)\right] .
\end{multline}
\end{defn}

\begin{rem}
Unlike Definition~\ref{defn:extendG} this definition of
$G^{a,b}_{r,s;\sharp}(\tau)$ depends on the choice of
the regularization scheme $\sharp$.
Moreover, because of \eqref{equ:ZN1regval}
the dependence only appears in the constant term
$\zeta^{a,b}_{N;\sharp}(r,s)$.
\end{rem}

If $s,r\ge 3$ and $a,b\in\Z/N\Z$, then it is
not hard to show
\begin{equation}\label{equ:dblGstuffleForm}
G^a_{r}(\tau)G^b_{s}(\tau) =G_{r,s}^{a,b}(\tau)
            +G_{s,r}^{b,a}(\tau)+\gd_{a,b} G^a_{r+s}(\tau)
\end{equation}
which follows easily by the definition. But
\begin{equation*}
G^a_{r}(\tau)G^b_{s}(\tau)\ne \sum_{\substack{i+j=r+s\\  i,j\ge 1}} \left(\binom{i-1}{r-1} G^{a+b,b}_{i,j}(\tau)
    + \binom{i-1}{s-1} G^{a+b,a}_{i,j}(\tau) \right)
\end{equation*}
since the right-hand side has undefined terms.
Our goal is to give an extension of these double shuffle
relations to the case $r,g\ge 1$ and $(r,s)\ne (1,1)$
by using a complete version of the zeta values and
Eisenstein series of level $N$.

\section{Decomposition of the zeta values at level $N$}
In this section we break $\zeta_{N}^a(s)$ into two parts, one
of which is inspired by its complete version defined as follows.
For all positive integers $n$ and $a\in\Z/N\Z$, we set
\begin{equation}\label{equ:completeVersion}
\frakz_{N}^{a}(n)= \frac12\sum_{\substack{k\in\Z_{\ne 0} \\ k\equiv a\ppmod{N}}} \frac{1}{k^n}
 =\frac12\lim_{M\to\infty} \sum_{\substack{0<|k|<M \\ k\equiv a\ppmod{N}}} \frac{1}{k^n}
\end{equation}
by using the Cauchy principal value.
This infinite series converges absolutely for $n\ge 2$ and conditionally for $n=1$.
This complete version of $\zeta_{N}^a(s)$ clearly satisfies the stuffle relations
as those given by \eqref{equ:stuffle}.

\begin{rem}
When the level $N=2$, the decomposition of $\zeta_{N}^a(n)$
corresponds to the decomposition of it into the the
Bernoulli number part and non-Bernoulli number part.
See \cite[page 1103]{KanekoTa2013}. If $a\equiv 0 \pmod{2}$,
then the non-Bernoulli number part is essentially the Riemann
zeta values at odd integers.
\end{rem}

To extract more information from $\frakz_{N}^a(n)$ and find its relation
to $\zeta_{N}^a(n)$ we now recall that
the $n$-th Bernoulli periodic function $\bar{B}_n(x)$,
(see \cite[p.~267]{Apostol1976})
has the series expansion
\begin{equation}\label{equ:BernPolyExpansion}
\bar{B}_n(x)=-\frac{n!}{(2\pi i)^n}\sum_{k\in \bbZ_{\ne 0}} \frac{e^{2\pi i k x}}{k^n}
=-\frac{n!}{(2\pi i)^n}\lim_{M\to\infty} \sum_{0<|k|<M} \frac{e^{2\pi i k x}}{k^n},
\end{equation}
which converges absolutely for $n\ge 2$ and conditionally for $n=1$.
It is related to the Bernoulli polynomials by $\bar{B}_n(x)=B_n(\{x\})$
where $\{x\}$ is the fractional part of $x$, except for $n=x=1$ when
$B_1(\{1\})=B_1(0)=-1/2$ while
\begin{equation}\label{equ:barB11=0}
\bar{B}_1(1)=-\frac{1}{2\pi i}\lim_{M\to\infty} \sum_{0<|k|<M} \frac{1}{k}=0
\end{equation}
by symmetry.
The following identify motivates our definition of the constant
term of the generating series for the level $N$ Eisenstein series.

\begin{prop}\label{prop:HurwitzZeta}
For all $n\ge 1$ and $a\in\Z/N\Z$, we have
\begin{equation}\label{equ:averageZetaaAnd-a}
\frakz_{N}^{a}(n)=
-\frac{(2\pi i)^n}{2N\cdot n!} \sum_{l=1}^{N} \exp\left(-\frac{2\pi i l a}{N}\right)\bar{B}_n\left(\frac{l}{N}\right).
\end{equation}
\end{prop}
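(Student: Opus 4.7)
The plan is to invert the Fourier expansion \eqref{equ:BernPolyExpansion} of $\bar{B}_n$ by means of the discrete Fourier transform on $\Z/N\Z$. Concretely, I will insert the character sum identity
\begin{equation*}
\frac{1}{N}\sum_{l=1}^{N}\exp\!\left(\frac{2\pi i\,l(k-a)}{N}\right)=\begin{cases} 1, & k\equiv a\ppmod{N},\\ 0, & \text{otherwise},\end{cases}
\end{equation*}
into the defining series \eqref{equ:completeVersion}. After interchanging the finite sum over $l$ with the (regularized) sum over $k$, the inner sum
$\sum_{k\in\Z_{\ne 0}} e^{2\pi i l k/N}/k^n$ is precisely $-({(2\pi i)^n}/{n!})\bar{B}_n(l/N)$ by \eqref{equ:BernPolyExpansion}. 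Pulling the factor $e^{-2\pi i l a/N}$ out of the inner sum yields exactly the right-hand side of \eqref{equ:averageZetaaAnd-a}.

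In more detail, first I would handle the easy case $n\ge 2$, where both series converge absolutely, so Fubini applies without issue and the computation
\begin{equation*}
\frakz_N^a(n)=\frac{1}{2N}\sum_{l=1}^{N}e^{-2\pi i l a/N}\sum_{k\in\Z_{\ne 0}}\frac{e^{2\pi i l k/N}}{k^n}=-\frac{(2\pi i)^n}{2N\cdot n!}\sum_{l=1}^{N}e^{-2\pi i l a/N}\bar{B}_n(l/N)
\end{equation*}
is immediate from \eqref{equ:BernPolyExpansion}. Second, for $n=1$ I would restore the Cauchy principal value cutoff $|k|<M$ on both sides before interchanging sums (which is justified since the cutoff sum over $k$ is finite for each $M$) and then let $M\to\infty$. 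For the terms with $1\le l\le N-1$ the inner sum converges to $-2\pi i\,\bar{B}_1(l/N)$ by \eqref{equ:BernPolyExpansion}, while the $l=N$ term gives $\sum_{0<|k|<M}1/k=0$ for every $M$ by symmetry, matching $\bar{B}_1(1)=0$ from \eqref{equ:barB11=0}; hence the same formula holds with no extra boundary contribution.

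The only delicate point is this interchange of limits at $n=1$: the series defining $\frakz_N^a(1)$ and the series defining $\bar{B}_1$ are only conditionally convergent, so absolute-convergence theorems do not apply. However, since we use the \emph{symmetric} cutoff $|k|<M$ in both \eqref{equ:completeVersion} and \eqref{equ:BernPolyExpansion}, and the character sum over $l$ is a \emph{finite} sum, no rearrangement is actually needed: the identity holds for each fixed $M$ between truncated sums, and the limit is then taken afterwards. Once this is observed, the proof reduces to the orthogonality of additive characters modulo $N$ and the Fourier expansion \eqref{equ:BernPolyExpansion}, which I take as given.
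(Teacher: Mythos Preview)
Your proof is correct and is essentially the same argument as the paper's: the paper simply invokes the orthogonality relation $\sum_{l=1}^N \eta_N^{lm}=N\gd_{N\mid m}$ and leaves the rest implicit, whereas you spell out the interchange of sums and the $n=1$ Cauchy-principal-value care in detail. Nothing further is needed.
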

\begin{proof}
This follows quickly from the identity
\begin{equation*}
 \sum_{l=1}^{N} \eta_N^{l m}=
\left\{
  \begin{array}{ll}
    N, & \hbox{if $N|m$;} \\
    0, & \hbox{otherwise,}
  \end{array}
\right.
\end{equation*}
for any integer $m$.
\end{proof}

\begin{cor}\label{cor:vanishSine}
Let $1\leq a \leq N$. Then for all $r\ge 1$
\begin{equation*}
\sum_{l=1}^{N} \sin\left(\frac{2\pi l a}{N}\right)\bar{B}_{2r}\left(\frac{l}{N}\right)
=\sum_{l=1}^{N} \cos\left(\frac{2\pi l a}{N}\right)\bar{B}_{2r+1}\left(\frac{l}{N}\right)=0.
\end{equation*}
\end{cor}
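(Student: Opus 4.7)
The plan is to deduce Corollary~\ref{cor:vanishSine} directly from Proposition~\ref{prop:HurwitzZeta} by exploiting the fact that $\frakz_{N}^{a}(n)$ is a real number, and then extracting real and imaginary parts according to the parity of $n$.

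First I would observe that $\frakz_{N}^{a}(n)\in\R$ for every $n\ge 2$ and every residue class $a$. This is immediate from the defining series \eqref{equ:completeVersion}, since it is a (Cauchy-principal-value) sum of real numbers $1/k^n$. Equivalently, pairing $k$ with $-k$ in the sum gives
\[
\frakz_{N}^{a}(n)=\tfrac12\bigl(\zeta_N^{a}(n)+(-1)^n\zeta_N^{-a}(n)\bigr),
\]
which is manifestly real (for $n\ge 2$; the case $n=1$ we do not need here since the corollary only concerns $n=2r$ and $n=2r+1$ with $r\ge 1$).

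Next I would rewrite Proposition~\ref{prop:HurwitzZeta} as
\[
\frakz_{N}^{a}(n)=-\frac{(2\pi i)^n}{2N\cdot n!}\sum_{l=1}^{N}\Bigl(\cos\tfrac{2\pi la}{N}-i\sin\tfrac{2\pi la}{N}\Bigr)\bar{B}_n\Bigl(\tfrac{l}{N}\Bigr),
\]
using $\exp(-2\pi i la/N)=\cos(2\pi la/N)-i\sin(2\pi la/N)$. Since every term $\bar{B}_n(l/N)$ is real, the sum on the right decomposes cleanly into a real part (the cosine sum) and an imaginary part ($-i$ times the sine sum).

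Finally, I would split according to the parity of $n$. For $n=2r$ the prefactor $(2\pi i)^{2r}=(-1)^r(2\pi)^{2r}$ is real, so reality of $\frakz_{N}^{a}(2r)$ forces the imaginary part of the bracketed sum to vanish, yielding $\sum_{l=1}^{N}\sin(2\pi la/N)\bar{B}_{2r}(l/N)=0$. For $n=2r+1$ the prefactor $(2\pi i)^{2r+1}=i(-1)^r(2\pi)^{2r+1}$ is purely imaginary, so reality of $\frakz_{N}^{a}(2r+1)$ forces the real part of the bracketed sum to vanish, yielding $\sum_{l=1}^{N}\cos(2\pi la/N)\bar{B}_{2r+1}(l/N)=0$. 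There is no serious obstacle here; the only minor subtlety is the convergence of \eqref{equ:completeVersion} at $n=1$, but the corollary only asks for $r\ge 1$, so $n\ge 2$ throughout and the argument is unconditional.
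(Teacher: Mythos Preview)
Your argument is correct and is essentially the paper's own first argument, repackaged: the paper observes that under $a\mapsto N-a$ the left-hand side $\frakz_N^a(n)$ of \eqref{equ:averageZetaaAnd-a} is multiplied by $(-1)^n$, which on the right-hand side amounts to complex conjugation of $\exp(-2\pi ila/N)$ and hence forces the sine (resp.\ cosine) sum to vanish for even (resp.\ odd) $n$; your ``$\frakz_N^a(n)\in\R$ so take real/imaginary parts'' is the same computation read through $\overline{(2\pi i)^n}=(-1)^n(2\pi i)^n$. The paper also notes a second, more direct route you did not use, namely the parity relations $\bar B_{2r}(1-x)=\bar B_{2r}(x)$ and $\bar B_{2r+1}(1-x)=-\bar B_{2r+1}(x)$, which give the vanishing by pairing $l\leftrightarrow N-l$ without invoking $\frakz_N^a$ at all.
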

\begin{proof}
Notice that under $a\to N-a$ the left-hand side of \eqref{equ:averageZetaaAnd-a}
is invariant if $n$ is even and changes the sign if $n$ is odd.
It also follows from the fact that $\bar{B}_{2r}(1-x)=\bar{B}_{2r}(x)$
and $\bar{B}_{2r+1}(1-x)=-\bar{B}_{2r+1}(x)$ for all $r\ge 0$.
\end{proof}

The following corollary provides the exact
relation between $\zeta_{N}^a$ and $\frakz_{N}^a$.
\begin{cor}\label{cor:tzN}
For all positive integers $n$ and $a\in\Z/N\Z$, we have
\begin{equation*}
\frakz_{N}^{a}(n)=
\frac{1}{2}\Big(\zeta_{N;\sharp}^{a}(n)+(-1)^n\zeta_{N;\sharp}^{-a}(n)\Big),
\end{equation*}
where $\sharp=\sha$ or $\ast$, and
when $n=1$ the right-hand side is defined by
\eqref{equ:ZN1regval}.
\end{cor}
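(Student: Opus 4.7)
The plan is to split the argument according to whether the defining series for $\frakz_N^a(n)$ in \eqref{equ:completeVersion} converges absolutely ($n\ge 2$) or only conditionally ($n=1$).

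For $n\ge 2$, I break the symmetric sum into its positive and negative parts. The positive contribution $\sum_{k>0,\,k\equiv a\,(N)}k^{-n}$ is exactly $\zeta_N^a(n)$. For the negative part I substitute $k\mapsto -m$: the congruence $k\equiv a\pmod N$ becomes $m\equiv -a\pmod N$ and $k^{-n}$ becomes $(-1)^n m^{-n}$, so this part contributes $(-1)^n\zeta_N^{-a}(n)$. Halving yields the stated identity, and the subscript $\sharp$ is harmless because $\zeta_{N;\sharp}^a(n)=\zeta_N^a(n)$ when $n\ge 2$.

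For $n=1$, I use the explicit regularization \eqref{equ:ZN1regval}. When I replace $a$ by $-a$ and subtract, the formal variable $T$ cancels because it appears with the same coefficient $1/N$ in both $\zeta_{N;\sharp}^a(1)$ and $\zeta_{N;\sharp}^{-a}(1)$; this cancellation is the shadow, at the level of regularized values, of the fact that the Cauchy principal value defining $\frakz_N^a(1)$ is already finite. What is left is the finite identity
\[
\tfrac12\bigl(\zeta_{N;\sharp}^a(1)-\zeta_{N;\sharp}^{-a}(1)\bigr)=\frac{1}{2N}\sum_{\ga=1}^{N-1}(\eta^{-a\ga}-\eta^{a\ga})Li_1(\eta^\ga),
\]
which I then match against the closed form produced by Proposition~\ref{prop:HurwitzZeta} at $n=1$, using $\bar B_1(l/N)=l/N-1/2$ for $1\le l<N$ together with $\bar B_1(1)=0$ from \eqref{equ:barB11=0}. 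Writing $Li_1(\eta^\ga)=-\log(1-\eta^\ga)=-\log(2\sin(\pi\ga/N))-i\pi(\ga/N-1/2)$ and pairing $\ga$ with $N-\ga$, the contributions containing $\log(2\sin(\pi\ga/N))$ cancel because the multiplying factor $\sin(2\pi a\ga/N)$ is odd under $\ga\mapsto N-\ga$, and the surviving sum reproduces exactly the expression from Proposition~\ref{prop:HurwitzZeta}; equivalently, it is the digamma reflection formula $\psi(1-x)-\psi(x)=\pi\cot(\pi x)$ at $x=a/N$. The degenerate case $a\equiv 0\pmod N$ is immediate: both sides vanish by the $k\leftrightarrow -k$ symmetry.

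The only real obstacle is this last step in the $n=1$ case, namely converting the finite $Li_1$-combination into the Bernoulli-periodic-function form of Proposition~\ref{prop:HurwitzZeta}. The parity argument dispatches the imaginary part, and the remaining real part reduces to a standard geometric-series evaluation of $\sum_{l=1}^{N-1} l\,\eta^{-la}$; the rest is bookkeeping.
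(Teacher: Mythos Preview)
Your proof is correct and follows essentially the same route as the paper: for $n\ge 2$ you split the symmetric sum into its positive and negative halves exactly as the paper does, and for $n=1$ you combine the regularized value \eqref{equ:ZN1regval} with Proposition~\ref{prop:HurwitzZeta}, which is again the paper's strategy. The only difference is presentational: the paper packages the $n=1$ computation into the single identity $Li_1(e^{-2\pi i\theta})-Li_1(e^{2\pi i\theta})=2\pi i\bar B_1(\theta)$ for $\theta\in(0,1)$, whereas you unfold this by writing $Li_1(\eta^\ga)=-\log(2\sin(\pi\ga/N))-i\pi(\ga/N-\tfrac12)$ and killing the logarithmic piece with the parity $\ga\mapsto N-\ga$; both arguments are the same calculation viewed from slightly different angles.
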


\begin{proof}
Suppose $n\ge 2$ first. Then we can break the sum in
\eqref{equ:completeVersion} into two parts, one with positive
indices which produces $(2\pi i)^{-n}\zeta_{N}^{a}(n)$ and the other negative
which leads to $(-2\pi i)^{-n}\zeta_{N}^{-a}(n)$.
For $n=1$ the corollary follows easily from
Proposition~\ref{prop:HurwitzZeta} by using the fact that
\begin{equation*}
    Li_1(-e^{2\pi i \theta})-Li_1(e^{2\pi i \theta})= 2\pi i\theta-\pi i=2\pi i \bar{B}_1(\theta)
\quad \forall \theta\in(0,1) .
\end{equation*}
Notice that the $l=N$ term in the sum
on the right-hand side of \eqref{equ:averageZetaaAnd-a}
vanishes by \eqref{equ:barB11=0}.
We leave the details to the interested reader.
\end{proof}

Proposition~\ref{prop:HurwitzZeta}
leads us to the following definition if we follow the guideline that
the constant term of the multiple Eisenstein series
(even for the regularized values) should be closely related to
the multiple zeta values, at any level.
\begin{defn}\label{defn:gb}
For $n>0$ and $1\leq a \leq N$ we define
\begin{equation}\label{equ:betaDefn}
\gb^a_{n}=\gb^{a;N}_{n} = -\frac{1}{2N\cdot n!}\sum_{l=1}^N
    \exp\left(-\frac{2\pi i l a }{N}\right)B_n\left(\frac{l}{N}\right).
\end{equation}
\end{defn}
It is clear from Corollary \ref{cor:tzN}, Proposition~\ref{prop:HurwitzZeta},
and \eqref{equ:barB11=0} that
\begin{equation}\label{equ:gbtzRel}
\gb^a_{n}=(2\pi i)^{-n}\frakz_{N}^{a}(n)+\frac{\gd_{n,1}}{4N}.
\end{equation}

Let $\sharp=\sha$ or $\ast$. For all $r\geq 1, s\geq 1$
and $a,b\in\Z/N\Z$, we define
\begin{align*}
\tg^a_s(q)=&\, (2\pi i)^{-s}g^a_s(\tau),  \quad
\bargb^a_r= \frac{(2\pi i)^{-r}}{2}\Big(\zeta_{N;\sharp}^{a}(r)
    -(-1)^r\zeta_{N;\sharp}^{-a}(r)\Big)-\frac{\gd_{r,1}}{4N},  \\
I_{r,s}^{a,b}(q)=&\, \tg^{a}_{r}(q)\bargb^b_s
 + \sum_{\substack {h+p=r+s\\ h\geq 1}}  \bargb^{a-b}_p
 \left[(-1)^s\binom{p-1}{s-1} \tg^a_{h}(q)
    + (-1)^{p-r}\binom{p-1}{r-1} \tg^b_{h}(q)\right] .
\end{align*}\noindent
Notice that the quantities defined above
are independent of whether
$\sharp=\sha$ or $\ast$ according to \eqref{equ:ZN1regval}.
The following result generalizes \cite[Lemma 3]{KanekoTa2013}.

\begin{prop} \label{prop:fourierexpansion}
For any integer $k\ge 2$ and $a,b\in\Z/N\Z$, define the generating series
\begin{equation*}
    \frakI_k^{a,b}(X,Y)=\sum_{r+s=k} I_{r,s}^{a,b}(q) X^{r-1}Y^{s-1}.
\end{equation*}
If $k\ge 3$ then
\begin{multline}\label{equ:Irscase}
 \sum_{\substack{h+p=k\\ h\geq 1}}
  \Big(X^{h-1}Y^{p-1}\tg^{a}_{h}(q)\bargb^b_p
  + Y^{h-1}X^{p-1} \tg^{b}_{h}(q)\bargb^a_p\Big) \\
  = \frakI_k^{a,b}(X,Y)+\frakI_k^{b,a}(Y,X)
  =\frakI_k^{a+b,a}(X+Y,X)+\frakI_k^{a+b,b}(X+Y,Y).
\end{multline}
If $k=2$ (i.e., $r=s=1$), then we have
\begin{equation}\label{equ:I11case}
\tg^{a}_{1}(q)\bargb^b_1+\tg^{b}_1(q)\bargb^a_1= I_{1,1}^{a+b,b}+I_{1,1}^{a+b,a}.
\end{equation}
\end{prop}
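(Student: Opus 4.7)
My plan is to pass from the polynomial $\frakI_k^{a,b}(X,Y)$ to a compact closed form and then verify both equalities by elementary algebra, using two main inputs: a standard binomial identity, and the parity symmetry $\bargb^{-c}_p = (-1)^{p+1}\bargb^c_p$ inherited from the complete zeta values.

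First I would substitute the definition of $I_{r,s}^{a,b}(q)$ into $\frakI_k^{a,b}(X,Y)$, interchange the orders of summation, and apply the binomial theorem. For each fixed pair $(h,p)$ with $h+p=k$ and $h\ge 1$, direct calculation gives
\begin{equation*}
\sum_{r+s=k}(-1)^s\binom{p-1}{s-1}X^{r-1}Y^{s-1} = -X^{h-1}(X-Y)^{p-1},
\end{equation*}
and the companion sum with $(-1)^{p-r}\binom{p-1}{r-1}$ in place of $(-1)^s\binom{p-1}{s-1}$ equals $+Y^{h-1}(X-Y)^{p-1}$. These simplifications produce the compact expression
\begin{equation*}
\frakI_k^{a,b}(X,Y) = \sum_{r+s=k}\tg^a_r(q)\bargb^b_s X^{r-1}Y^{s-1} + \sum_{\substack{h+p=k\\ h\ge 1}}\bargb^{a-b}_p(X-Y)^{p-1}\bigl(Y^{h-1}\tg^b_h(q) - X^{h-1}\tg^a_h(q)\bigr).
\end{equation*}
For the first equality of \eqref{equ:Irscase} I add this to the analogue obtained by swapping $(a,X)\leftrightarrow(b,Y)$. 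The two diagonal pieces immediately reconstitute the claimed left-hand side of \eqref{equ:Irscase}. The off-diagonal terms combine as $\bigl[\bargb^{a-b}_p + (-1)^p\bargb^{b-a}_p\bigr](X-Y)^{p-1}\bigl(Y^{h-1}\tg^b_h - X^{h-1}\tg^a_h\bigr)$, and the bracket vanishes term by term thanks to the symmetry $\bargb^{-c}_p = (-1)^{p+1}\bargb^c_p$. This last identity is a direct consequence of Corollary~\ref{cor:tzN} together with the palindromic relation $\frakz_N^{-c}(p) = (-1)^p\frakz_N^c(p)$, which follows from \eqref{equ:completeVersion} by the substitution $k\mapsto -k$ inside the defining sum.

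For the second equality I would substitute $(a,b;X,Y)\mapsto(a+b,a;X+Y,X)$ and $(a,b;X,Y)\mapsto(a+b,b;X+Y,Y)$ into the compact form. Crucially, under these substitutions $(a+b)-a = b$ with $(X+Y)-X = Y$, and symmetrically $(a+b)-b = a$ with $(X+Y)-Y = X$, so the index on $\bargb$ and the linear argument of $(X-Y)^{p-1}$ transform consistently. Summing the two substituted expressions, the terms containing $\tg^{a+b}$ (from both diagonal and off-diagonal pieces) collect into $\sum_{r+s=k}(X+Y)^{r-1}\tg^{a+b}_r\bigl[\bargb^a_s X^{s-1}+\bargb^b_s Y^{s-1}\bigr]$ with opposite signs and cancel after relabeling $(h,p)\leftrightarrow(r,s)$; what survives is precisely $\sum_{h+p=k,\,h\ge 1}\bigl[X^{h-1}Y^{p-1}\tg^a_h\bargb^b_p + Y^{h-1}X^{p-1}\tg^b_h\bargb^a_p\bigr]$, matching the stuffle side. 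The case $k=2$ reduces to checking \eqref{equ:I11case} by hand from the simplified formula $I_{1,1}^{a,b} = \tg^a_1\bargb^b_1 + \bargb^{a-b}_1(\tg^b_1 - \tg^a_1)$, which is immediate. The main obstacle I anticipate is carrying out the initial binomial simplification cleanly with correct signs and index bounds; once the compact form of $\frakI_k^{a,b}$ is in hand, both equalities reduce to cancellations forced by the $\bargb$-symmetry (for stuffle) and the affine shift $X\mapsto X+Y$ (for shuffle).
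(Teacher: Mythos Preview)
Your approach is essentially the same as the paper's: both derive the compact form of $\frakI_k^{a,b}(X,Y)$ via the binomial identity and then observe that the off-diagonal piece cancels in the stuffle sum (the paper phrases this as ``considering even and odd $p$'', which is exactly your parity relation $\bargb^{-c}_p=(-1)^{p+1}\bargb^c_p$---note this follows immediately from the definition of $\bargb^a_r$ without passing through $\frakz$). You supply more detail on the shuffle equality, which the paper simply declares ``straight-forward'' and omits, so your write-up is in fact more complete.
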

\begin{proof}
We first see that
\begin{align}
&\,  \frakI_k^{a, b}(X, Y) = \sum_{r+s=k} I_{r,s}^{a, b}(q) X^{r-1}Y^{s-1}\notag\\
= &\, \sum_{\substack{h+p=k\notag\\ h\geq 1}} \tg^{a}_{h}(q)\bargb^b_p X^{h-1}Y^{p-1} +
 \sum_{h+p=k} \bargb^{a-b}_{p}(p)\cdot \notag\\
 &\, \left[\tg_h^a(q)\left(\sum_{r+s=k}(-1)^s\binom{p-1}{s-1}X^{r-1}Y^{s-1}\right) + \tg_h^b(q)\left(\sum_{r+s=k}(-1)^{p-r}\binom{p-1}{r-1}X^{r-1}Y^{s-1}\right)\right]\notag\\
= &\,  \sum_{\substack{h+p=k\notag\\ h\geq 1}} \tg^{a}_{h}(q)\bargb^b_p X^{h-1}Y^{p-1}\notag\\
+&\,\sum_{h+p=k} \bargb^{a-b}_p\Big( \tg_h^b(q)Y^{h-1}(X-Y)^{p-1}-\tg_h^a(q)X^{h-1}(X-Y)^{p-1}\Big)\label{equ:vanishUnderSum}
\end{align}{}\noindent
from binomial expansion.
Now by considering even and odd $p$ we see that
the sum term of \eqref{equ:vanishUnderSum} has
no contribution in $\frakI_k^{a, b}(X, Y)+\frakI_k^{b, a}(Y, X).$
The last equality of the \eqref{equ:Irscase} is straight-forward
so we omit its proof.
Finally, \eqref{equ:I11case} follows easily by direct computation.
\end{proof}

\section{Double shuffle relations of double Eisenstein series at level $N$}
In this section we are going to define three power series
$E_{r,s}^{a,b}(q), P_{r,s}^{a,b}(q)$
and $E_k(q)$ which, together with $I_{r,s}^{a,b}(q)$ for the first one,
are complementary to the double zeta values,
product of the zeta values and the zeta values at level $N$, respectively.
The latter values, essentially the constant terms of the corresponding
Eisenstein series, satisfy the double shuffle
relations in Proposition~\ref{prop:dbsf}.

Write $q=\exp(2\pi i \tau)$ and define
{\allowdisplaybreaks
\begin{align*}
\tPsi^\bfa_\bfs(q) =&\, \tPsi^{\bfa;N}_\bfs(q) = (2\pi i)^{-|\bfs|} \Psi^{\bfa;N}_\bfs(N\tau), \quad
\tg^\bfa_\bfs(q)= \tg^{\bfa;N}_\bfs(q) =  (2\pi i)^{-|\bfs|}g^\bfa_\bfs(\tau),\\
(\tg^a_k)'(q)=&\,(\tg^{a;N}_k)'(q) = -\sum_{n=1}^{\infty} n\tPsi^a_{k+1}(q^n), \quad \Big(f_k'(q)=\frac{q}{Nk}\frac{d}{dq}f_k(q)\Big)\\
\gb_{r,s}^{a,b}(q)=&\, \tg^a_r(q)\gb^{b}_{s}+ \sum_{i+j=r+s}\gb^{a-b}_i\left[(-1)^s\binom{i-1}{s-1}\tg^a_j(q) + (-1)^{i-r}\binom{i-1}{r-1}\tg^{b}_j(q)\right],\\
\vep_{r,s}^{a,b}(q)=&\,\gd_{r,2}(\tg^{b}_{s})'(q) - \gd_{r,1}(\tg^{b}_{s-1})' (q)
+ \gd_{s,1}\big((\tg^a_{r-1})'(q) + \tg^a_{r}(q)\big) +N \gd_{r,1}\gd_{s,1} \gam_N^{a,b}(q),
\end{align*}}\noindent
where $\gam_N^{a,b}=\gam_N^{a,b}(q)$ can be defined by the 
procedure to be outlined in Proposition~\ref{prop:gams}.
Further we set
\begin{equation}\label{equ:f2}
 f^{a}_2=f^{a}_2(q)=\frac1{N^2}\sum_{n,u=1}^\infty  \eta^{au} n q^{nu}=\frac1{N^2}\sum_{m=1}^\infty \gk_1^a(m) q^m, \ \text{where}\
  \gk_1^a(m)=\sum_{nu=m}  \eta^{au} n.
\end{equation}

\begin{rem} \label{rem:gammas}
(i). To save space, in the rest of the paper we will always suppress the dependence
on $q$ in the $q$-series $\gam_N^{a,b}(q)$, $f^0_2(q)$, etc. Of course, they all depend on $N$.

(ii). We will see that the definition of $\gam_N^{a,b}$ is not unique. For example,
for small levels we may define  $\gam_N^{a,b}$ explicitly as follows.
For $N=1$: $\gam_1^{0,0}=f^0_2=\tg^0_2$. For $N=2$ we can set
\begin{equation}\label{equ:gamN=2}
  \gam_2^{0,0}=f^0_2= \tg^0_2,  \quad \gam_2^{0,1}=\gam_2^{1,0}=0,
\quad \gam_2^{1,1}=\tg^1_2-f^1_2.
\end{equation}
When $N=2$ our choice of the above are different from that of \cite{KanekoTa2013}
(see Remark~\ref{rem:KT2012}). For $N=3$ we may define
\begin{equation}\label{equ:gamN=3}
\begin{split}
    &   \gam_3^{a,a}=\tg^a_2-f^a_2 \quad  \text{and}\quad
    \gam_3^{a,0}=f^0_2+f^a_2-\gam_3^{a,a} \quad \text{ for }a=1,2,  \\
    &  \gam_3^{0,0}=\tg^0_2,\quad \gam_3^{0,1}=-\gam_3^{1,0}, \quad \gam_3^{0,2}=-\gam_3^{2,0}, \quad \gam_3^{1,2}=\gam_3^{2,1}=0.
\end{split}
\end{equation}
\end{rem}

\begin{defn}\label{defn:EandP}
We define
{\allowdisplaybreaks
\begin{align*}
E^{a}_{k}(q)&=E^{a;N}_{k}(q)=\left\{
  \begin{array}{ll}
   \tg^{a}_{k}(q), & \hbox{if $k>2$;} \\
    0, & \hbox{if $k\le 2$;}
  \end{array}
\right.\\
E_{r,s}^{a,b}(q) &= E^{a, b;N}_{r,s}(q) = \tg_{r,s}^{a,b}(q)+ \gb_{r,s}^{a,b}(q)+\frac{\vep_{r,s}^{a,b}(q)}{2N},  \quad r,s\ge 1;\\
P_{r,s}^{a,b}(q)&= P^{a, b;N}_{r,s}(q)= \tg^a_{r}(q)\tg^{b}_{s}(q) + \gb^a_{r}\tg^{b}_{s}(q) + \gb^{b}_{s}\tg^{a}_{r}(q) \\
 &\hskip2cm + \frac{\gd_{r,2}(\tg^b_{s})'(q) +\gd_{s,2}(\tg^{a}_{r})'(q)}{2N}+\gd_{r,1}\gd_{s,1} \gl_N^{a,b}, \quad r,s\ge 1.
\end{align*}}\noindent
The quantities  $\gl_N^{a,b}=\gl_N^{a,b}(q)=\gl_N^{b,a}(q)$ will be 
defined by the procedure to be outlined in Proposition~\ref{prop:gams}
together with  $\gam_N^{a,b}$'s.
\end{defn}
For example, to be compatible with
\eqref{equ:gamN=2} we set
\begin{equation}\label{equ:glN=2}
 \gl_2^{0,0}= \gl_2^{1,0}=0,\quad \gl_2^{1,1}=-f_2^1,
\end{equation}
and  to be compatible with \eqref{equ:gamN=3} we set
\begin{equation}\label{equ:glN=3}
 \gl_3^{0,0}= \gl_3^{1,0}= \gl_3^{2,0}=\gl_3^{2,1}=0,\quad \gl_3^{1,1}=-f_2^1, \quad \gl_3^{2,2}=-f_2^2.
\end{equation}

Roughly speaking, the double Eisenstein series $G_{r,s}^{a,b}(\tau)$ is 
given by the sum of $\zeta_N^{a,b}(r,s)$ and $(2\pi i)^{r+s}E_{r,s}^{a,b}(q)$ 
(similar for depth one Eisenstein series $G^{a}_{r}(\tau)$)
while the product $G^{a}_{r}(\tau)G^{b}_{s}(\tau)$
is given by the sum of $\zeta_N^{a}(r)\zeta_N^{b}(s)$ and
$(2\pi i)^{r+s}P_{r,s}^{a,b}(q)$.
By Proposition~\ref{prop:dbsf}, the zeta function part already
satisfies the double shuffle relations. So to prove that similar relations
hold for Eisenstein series it suffices to prove the next result which
generalizes \cite[Lemma 4]{KanekoTa2013}.

\begin{thm}\label{thm:EisenSeriesDS}
Let $N$ be a positive integer. Then there are suitable choices of
$\gam_N^{a,b}$ and $\gl_N^{a,b}$, $0\le a,b<N$, such that they provide
the solution to the linear system
\begin{equation}\label{equ:gamStuffleAllN}
\gam_N^{a,a}-\gl_N^{a,a}=\tg^{a}_2,\quad
\gam_N^{a,b}+\gam_N^{b,a}-2\gl_N^{a,b}=0\quad \forall a\ne b \in\Z/N\Z,
\end{equation}
together with
\begin{equation}\label{equ:gamShuffleAllN}
\gam_N^{a+b,a}+\gam_N^{a+b,b}-2\gl_N^{a,b} = f^{a}_{2}+f^{b}_{2}\quad
\forall a,b\in\Z/N\Z.
\end{equation}
Consequently, for all $r,s\ge 1$
the three power series $E_{r,s}^{a,b}(q)$, $P_{r,s}^{a,b}(q)$
and $E^{a}_k(q)$ satisfy the double shuffle relation at level $N$:
\begin{align}\label{equ:stuffleEisenstein}
P_{r,s}^{a,b}(q)=&\,E_{r,s}^{a,b}(q)+E_{s,r}^{b,a}(q)+\gd_{a,b} E^{a}_{r+s}(q)\\
=&\,\sum_{\substack{i+j=r+s\\ i, j\ge 1}} \left(\binom{i-1}{r-1} E_{i,j}^{a+b,b}(q)
    + \binom{i-1}{s-1} E_{i,j}^{a+b,a}(q) \right), \label{equ:shuffleEisenstein}
\end{align}
\end{thm}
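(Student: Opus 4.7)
My plan is to separate the statement into two logically distinct tasks. First, assuming that functions $\gam_N^{a,b}$ and $\gl_N^{a,b}$ satisfying \eqref{equ:gamStuffleAllN} and \eqref{equ:gamShuffleAllN} exist, I will verify the identities \eqref{equ:stuffleEisenstein} and \eqref{equ:shuffleEisenstein} by plugging in the definitions of $E_{r,s}^{a,b}$, $P_{r,s}^{a,b}$ and $E^a_k$ and matching terms. Second, I will establish the existence of such $\gam$ and $\gl$; this is precisely the under-determined linear system advertised in the introduction and is where the technical heart of the argument lies.

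For the verification step I would treat several cases by weight/depth of boundary contributions. For the stuffle relation \eqref{equ:stuffleEisenstein}, after expanding, the product $\tg^a_r(q)\tg^b_s(q)$ inside $P_{r,s}^{a,b}(q)$ decomposes as $\tg_{r,s}^{a,b}(q)+\tg_{s,r}^{b,a}(q)+\gd_{a,b}\tg^a_{r+s}(q)$ plus a diagonal correction coming from the multiple divisor functions at level $N$ (this is precisely where Corollary~\ref{cor:productPsi} and the key divisor identity of Section~\ref{sec: multipleDivisorFunction} are used). The $\gb$-pieces match via the stuffle identity and the definitions of $\bargb$ and $\gb$. The correction terms packaged in $\vep_{r,s}^{a,b}$ absorb the derivative contributions $(\tg^a_k)'(q)$ that arise when either $r$ or $s$ equals $1$ or $2$. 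Everything collapses when $(r,s)\ne(1,1)$; the leftover obstruction at $(r,s)=(1,1)$ is exactly the first equation in \eqref{equ:gamStuffleAllN}, which pins down $\gam^{a,a}_N-\gl^{a,a}_N$ and $\gam^{a,b}_N+\gam^{b,a}_N-2\gl^{a,b}_N$.

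For the shuffle relation \eqref{equ:shuffleEisenstein}, the same strategy applies but using Proposition~\ref{prop:fourierexpansion}: after expanding, the $\gb$-part of $E_{r,s}^{a,b}(q)+E_{s,r}^{b,a}(q)$ is rearranged by \eqref{equ:Irscase} into the $a+b$ shape demanded by the right-hand side, while the $\tg_{r,s}^{a,b}$-part is handled by the shuffle-type identity for multiple divisor functions at level $N$ (again the key divisor identity from Section~\ref{sec: multipleDivisorFunction} is essential). The boundary corrections in $\vep$ are checked case by case, and again the only obstruction survives at $(r,s)=(1,1)$, producing precisely the requirement \eqref{equ:gamShuffleAllN}.

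The main obstacle is the existence of a simultaneous solution of \eqref{equ:gamStuffleAllN} and \eqref{equ:gamShuffleAllN}. There are $3N^2+N$ unknowns (the $\gam_N^{a,b}$ together with the symmetric $\gl_N^{a,b}$) and $N^2+N$ linear equations, so the system is under-determined but must be consistent. To establish consistency I would proceed by the standard linear-algebra tactic: exhibit an explicit ``base solution'' by choosing $\gl_N^{a,b}$ to break the symmetry (for instance, set $\gl_N^{a,b}=0$ when $a\ne b$ and then solve for $\gam$), and verify compatibility of the resulting equations by summing over suitable orbits. The crucial consistency check reduces to an identity among $q$-series built from $f_2^a$ and $\tg^a_2$ and is guaranteed by the key identity on level-$N$ multiple divisor functions proved in Section~\ref{sec: multipleDivisorFunction}. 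The explicit small-$N$ solutions in \eqref{equ:gamN=2}, \eqref{equ:gamN=3}, \eqref{equ:glN=2}, \eqref{equ:glN=3} serve as a sanity check. Once this consistency is confirmed the final linear-algebra argument in the last section of the paper yields the desired $\gam$ and $\gl$ and completes the proof.
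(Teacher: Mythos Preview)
Your overall architecture---first reduce the double shuffle relations to the conditions \eqref{equ:gamStuffleAllN}, \eqref{equ:gamShuffleAllN} on $\gam,\gl$, then prove the linear system is solvable---matches the paper. But two of your attributions are wrong in a way that would make the verification step fail as written.

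First, the divisor identity of Section~\ref{sec: multipleDivisorFunction} is \emph{not} used anywhere in the verification of \eqref{equ:stuffleEisenstein} or \eqref{equ:shuffleEisenstein}. In the paper those identities are proved purely by generating-function manipulation: one packages everything into $\frakE^{a,b}(X,Y)$, $\frakP^{a,b}(X,Y)$, computes closed forms for $\tg^{a,b}(X,Y)$, $\gb^{a,b}(X,Y)$, $\vep^{a,b}(X,Y)$, and checks the stuffle and shuffle identities directly (equations \eqref{equ:gXY}--\eqref{equ:gbX+YY}). The key divisor identity (Corollary~\ref{cor:AllLevels}) enters \emph{only} in the solvability step, where it guarantees that the left null space of the coefficient matrix $A_N$ annihilates $\bfb_N$.

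Second, your stuffle decomposition of $\tg^a_r\tg^b_s$ is not right. Splitting the double sum over $m_1,m_2>0$ into $m_1>m_2$, $m_1<m_2$, $m_1=m_2$ gives $\tg^{a,b}_{r,s}+\tg^{b,a}_{s,r}$ plus a diagonal term $\sum_{m>0}\tPsi^a_r(q^m)\tPsi^b_s(q^m)$, which is \emph{not} $\gd_{a,b}\tg^a_{r+s}$. In the paper this diagonal produces the $\coth$ correction in \eqref{equ:gXY}. The $\gd_{a,b}$ term you need on the right of \eqref{equ:stuffleEisenstein} actually comes from the $\gb$-piece, via the pole of $\gb^{a-b}(X-Y)$ at $a=b$ (see \eqref{equ:gbXY}). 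Likewise, Proposition~\ref{prop:fourierexpansion} concerns the $\bargb$-quantities $I_{r,s}^{a,b}$, which are not part of $E_{r,s}^{a,b}$; the $\gb$-part is handled by the parallel but separate computations \eqref{equ:gbXY} and \eqref{equ:gbX+YY}.

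Minor: the variable count is $(3N^2+N)/2$, not $3N^2+N$ (the $\gl$'s are symmetric). And for solvability, simply setting $\gl^{a,b}_N=0$ for $a\ne b$ and solving for $\gam$ does not by itself show consistency; the paper instead bounds $\rank(A_N)$ from below and $\dim\calN(A_N)$ from below, then uses Corollary~\ref{cor:AllLevels} to show every null vector kills $\bfb_N$.
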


\begin{proof}
It suffices to show that there are suitable choices of $\gam_N^{a,b}$ and
$\gl_N^{a,b}$ satisfying \eqref{equ:gamStuffleAllN} and
\eqref{equ:gamShuffleAllN} such that the generating functions
\begin{align*}
 \frakE^{a} (X)=&\sum_{k\geq 1} E^a_k(q)X^{k-1},\\
 \frakE^{a,b} (X,Y)=& \sum_{r, s\geq 1} E_{r,s}^{a,b}(q) X^{r-1}Y^{s-1},\\\
 \frakP^{a,b} (X,Y)=&\sum_{r, s\geq 1} P_{r,s}^{a,b}(q)X^{r-1}Y^{s-1},
\end{align*}\noindent
satisfy the double shuffle relation:
\begin{align} \label{equ:EseriesStuffle}
  \frakP^{a,b} (X,Y)=&\frakE^{a,b} (X,Y)+\frakE^{b,a} (Y,X)+\gd_{a,b}\frac{\frakE^{a} (X)-\frakE^{a} (Y)}{X-Y},\\
      =&\frakE^{a+b,b} (X+Y,Y)+\frakE^{a+b,a} (X+Y,X).\label{equ:EseriesShuffle}
\end{align}

We first calculate the generating functions of the above defined power series. Set
{\allowdisplaybreaks
\begin{align*}
\tg^a(X)=&\sum_{k=1}^\infty \tg_k^a(q)X^{k-1} = -\frac{1}{N}\sum_{n=1}^{\infty}
    \eta^{an}e^{-\frac{nX}{N}}\frac{q^{n}}{1-q^n},\\
(\tg^a)'(X)=&\sum_{k=1}^\infty (\tg_k^a)'(q)X^{k-1} =\frac{1}{NX}\left(\sum_{n=1}^{\infty}
   \eta^{an}e^{-\frac{Xn}{N}}\frac{q^n}{(1-q^n)^2}-N^2 f^{a}_2(q)\right),\\
\tg^{a,b}(X,Y)=&\sum_{r,s=1}^\infty \tg_{r,s}^{a,b}(q)X^{r-1}Y^{s-1} = \frac{1}{N^2}\sum_{m,n\geq 1}
    \eta^{am+bn}e^{\frac{-mX-nY}{N}}\frac{q^m}{1-q^m}\frac{q^{m+n}}{1-q^{m+n}},\\
\gb^{a,b}(X,Y)=&\sum_{r,s=1}^\infty \gb_{r,s}^{a,b}(q)X^{r-1}Y^{s-1}
    = (\tg^b(Y)-\tg^a(X))\gb^{a-b}(X-Y)+ \tg^{a}(X)\gb^b(Y),\\
\vep^{a,b}(X,Y) =&\sum_{r,s=1}^\infty \vep_{r,s}^{a,b}(q)X^{r-1}Y^{s-1}
    =X(\tg^b)'(Y) - Y(\tg^b)'(Y) + X(\tg^a)'(X) + \tg^a(X)\\
    &\phantom{\sum \vep_{r,s}^{a,b}(q)X^{r-1}Y^{s-1}} -(\tg^b_0)'(q) +(\tg^a_0)'(q) + N\gam_N^{a,b}(q).
\end{align*}}

\begin{rem}
(i). Notice that $f^0_2(q)=\tg^0_2(q)$ but in general $f^{a}_2(q)\ne \tg^{a}_2(q).$

(ii). Notice also that
\begin{equation*}
(\tg^a_0)'(q)=-\sum_{n=1}^{\infty} n\tPsi^a_1(q^n)=
\frac1N\sum_{n=1}^{\infty} n\sum_{u=1}^{\infty} \eta^{au}  q^{un}
=Nf^a_2(q).
\end{equation*}
\end{rem}

Turning back to the proof of Theorem \ref{thm:EisenSeriesDS}, 
by the definition, we have
{\allowdisplaybreaks
\begin{align*}
\gb^a(X) &=  \sum_{k=1}^\infty \gb^a_k X^{k-1}
= -\frac{1}{2NX}\sum_{l=1}^N \sum_{n=1}^\infty \frac{X^n}{n!}
    e^{-2\pi la i/N} B_n\left(\frac{l}{N}\right)\\
&=-\frac{1}{2NX}\sum_{l=1}^N  e^{-2\pi la i/N}\left(\frac{Xe^{Xl/N}}{e^X-1}-1\right) \\
&= -\frac{1}{2N}\sum_{l=1}^N  \frac{e^{(X-2\pi  a i)l/N} }{e^X-1} + \frac{\gd_{a,0}}{2X}
=-\frac{1}{2N} \frac{1}{e^{(X-2\pi  a i)/N}-1} + \frac{\gd_{a,0}}{2X}.
\end{align*}}\noindent
Then
\begin{align*}
\frakE^a(X)&= \tg^a(X)-X\tg^{a}_{2}(q)-\tg^{a}_{1}(q),\\
\frakE^{a,b}(X,Y)&= \tg^{a,b}(X,Y)+\gb^{a,b}(X,Y) + \frac{1}{2N}\vep^{a,b}(X,Y),\\
\frakP^{a,b}(X,Y)&= \tg^a(X)g^b(Y)+ \gb^a(X)\tg^b(Y)+ \gb^b(Y)\tg^a(X) \\
    & +\frac{1}{2N}(X(\tg^b)'(Y) + Y(\tg^a)'(X))+M_N\gd_{a,b}(\gd_{a,0}-1)f_2^a(q).
\end{align*}\noindent
Now we compute $\frakE^{a,b}(X,Y)+ \frakE^{b, a}(Y,X).$
Straight-forward computation yields that
{\allowdisplaybreaks
\begin{align}
& \begin{split}
\tg^{a,b}(X,Y)+ \tg^{b, a}(Y,X) &= \tg^a(X)\tg^b(Y)- \frac{\tg^a(X)+\tg^b(Y)}{2N}  \\
&-\frac{1}{2N}\coth \left(\frac{X-Y-2\pi i(a-b)}{2N}\right)(\tg^a(X)-\tg^b(Y)),
\end{split} \label{equ:gXY} \\
& \begin{split}
\vep^{a,b}(X,Y)+\vep^{b, a}(Y,X)&= X(\tg^b)'(Y) + Y(\tg^a)'(X)+\tg^a(X)+\tg^b(Y) \ \hskip1cm \ \\
 &+N\gam_N^{a,b}(q)+N\gam_N^{b,a}(q).
\end{split} \label{equ:vepXY}
\end{align}}\noindent
On the other hand, if we set $\theta=(X-Y-2\pi (a-b) i)/N$ then
{\allowdisplaybreaks
\begin{align}
\gb^{a,b}(X,Y)+\gb^{b, a}(Y,X)&=
 (\tg^b(Y)-\tg^a(X))\gb^{a-b}(X-Y)+ \tg^{a}(X)\gb^b(Y) \notag \\
& - (\tg^b(Y)-\tg^a(X))\gb^{b-a}(Y-X)+ \tg^{b}(Y)\gb^a(X)  \notag \\
=&  \frac{\tg^b(Y)-\tg^a(X)}{2N}
 \left(\frac{1}{e^{-\theta}-1}-\frac{1}{e^\theta-1} \right)\notag \\
& + \tg^{a}(X)\gb^b(Y)+ \tg^{b}(Y)\gb^a(X)-\gd_{a,b}\frac{\tg^a(X)-\tg^a(Y)}{X-Y}\notag  \\
=& \frac{\tg^a(X)-\tg^b(Y)}{2N} \coth\left(\frac{X-Y-2\pi i(a-b)}{2N} \right)  \notag \\
 &+ \tg^a(X)\gb^b(Y) + \tg^b(Y)\gb^a(X)-\gd_{a,b}\frac{\tg^a(X)-\tg^a(Y)}{X-Y}.\label{equ:gbXY}
\end{align}}\noindent
Adding up \eqref{equ:gXY}, $\frac1{2N}\times$\eqref{equ:vepXY} and \eqref{equ:gbXY}
we can derive \eqref{equ:EseriesStuffle} quickly if the conditions
in \eqref{equ:gamStuffleAllN} are satisfied.
Similarly,
\begin{align}
& \tg^{a+b,a}(X+Y,X)+\tg^{a+b,b}(X+Y,Y) \notag\\
=& \frac{1}{N^2}\sum_{m\ne n\geq 1}
    \eta^{am+bn}e^{\frac{-mX-nY}{N}}\frac{q^m}{1-q^m}\frac{q^{n}}{1-q^n},\notag\\
=&\tg^a(X)\tg^b(Y)-\frac{1}{N^2}\sum_{n\geq 1}
    \eta^{(a+b)n}e^{\frac{-(X+Y)n}{N}}\left(\frac{q^{n}}{(1-q^n)^2}-\frac{q^{n}}{1-q^n}\right), \notag\\
=&\tg^a(X)\tg^b(Y)-\frac{X+Y}{N}(\tg^{a+b})'(X+Y)- f^{a+b}_{2}(q) -\frac1N \tg^{a+b}(X+Y), \label{equ:gX+YY}
\end{align}\noindent
and
\begin{align}
  & \vep^{a+b,a}(X+Y,X)+\vep^{a+b,b}(X+Y,Y) \notag\\
=& X(\tg^b)'(Y) +Y(\tg^a)'(X) + 2(X+Y)(\tg^{a+b})'(X+Y) + 2\tg^{a+b}(X+Y) \notag\\
+&2Nf^{a+b}_2(q)-Nf^{a}_2(q)-Nf^{b}_2(q)+ N\gam_N^{a+b,a}(q)+ N\gam_N^{a+b,b}(q). \label{equ:vepX+YY}
\end{align}\noindent
Further
\begin{align}
  & \gb^{a+b,a}(X+Y,X)+\gb^{a+b,b}(X+Y,Y) \notag\\
 =&(\tg^b(Y)-\tg^{a+b}(X+Y))\gb^a(X)+ \tg^{a+b}(X+Y)\gb^b(Y) \notag\\
 +&(\tg^a(X)-\tg^{a+b}(X+Y))\gb^{b}(Y)+ \tg^{a+b}(X+Y)\gb^a(X) \notag\\
 =&\tg^b(Y)\gb^a(X)+\tg^a(X)\gb^{b}(Y). \label{equ:gbX+YY}
\end{align}\noindent
Adding up \eqref{equ:gX+YY}, $\frac1{2N}\times$\eqref{equ:vepX+YY} and \eqref{equ:gbX+YY}
we can prove \eqref{equ:EseriesShuffle} if the conditions in \eqref{equ:gamShuffleAllN}
are satisfied.

To complete the proof of the theorem we now need to show that the system
\eqref{equ:gamStuffleAllN} together with \eqref{equ:gamShuffleAllN} has at least
one set of solutions of  $\gam_N^{a,b}$ and $\gl_N^{a,b}$ ($0\le a,b<N$)  in terms of
$f_2^a(q)$ and $\tg_2^a(q)$ ($0\le a<N$).
Essentially as a linear algebra problem this will be solved
in Proposition~\ref{prop:gams} in the last section of this paper.
This completes the proof of the theorem.
\end{proof}

Let  $\tz_{N;\sharp}^a(r)=(2\pi i)^{-r}\zeta_{N;\sharp}^a(r)$,
$\tz_{N;\sharp}^{a,b}(r,s)=(2\pi i)^{-r-s}\zeta_{N;\sharp}^{a,b}(r,s)$,
and define $\tG_{r;\sharp}^{a}(q)$ and $\tG_{r,s;\sharp}^{a,b}(q)$ similarly.

\begin{thm} \label{thm:EisenSeriesDSforG}
Let $N$ be a positive integer.  Let $\sharp=\sha$ or $\ast$.
Then for all $a,b\in\Z/N\Z$ and $r,s\ge 1$ with $(r,s)\ne 1$, we have
\begin{align}
\tG_{r;\sharp}^a(q)\tG_{s;\sharp}^b(q)
=&\,\tG^{a,b}_{r,s;\ast}(q)+\tG^{b,a}_{s,r;\ast}(q)+\gd_{a,b} \tG^{a}_{r+s;\sharp}(q)+\frac{\gd_{s,1}\tg^{a}_{r}(q)+\gd_{r,1}\tg^{b}_{s}(q)}{2N}
\label{equ:stuffleEisensteinG} \\
=&\,\sum_{\substack{i+j=r+s\\ i, j\ge 1}} \left(\binom{i-1}{r-1} \tG_{i,j;\sha}^{a+b,b}(q)
    + \binom{i-1}{s-1} \tG_{i,j;\sha}^{a+b,a}(q) \right)
    +f_{r,s}^{a,b}(q) , \label{equ:shuffleEisensteinG}
\end{align}
where $f_{r,s}^{a,b}(q) =\binom{k-2}{s-1}\big((\tg^{a+b}_{k-2})'(q) + \tg^{a+b}_{k-1}(q)\big)/N$ with $k=r+s$. Moreover,
\begin{multline}\label{equ:G11case}
\tG_{1;\sharp}^{a}(q)\tG_{1;\sharp}^{b}(q)=\tG_{1,1;\sha}^{a+b,b}(q)
    +\tG_{1,1;\sha}^{a+b,a}(q)+\frac12(f_2^a+f_2^b)\\
    +\frac1{2N}\Big(2(\tg^{a+b}_0)'(q)+2\tg^{a+b}_1(q)
        -(\tg^{a}_0)'(q)-(\tg^{b}_0)'(q)\Big).
\end{multline}
\end{thm}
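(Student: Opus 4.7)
Proof plan. The strategy is to decompose the depth-two generating series $\tG_{r,s;\sharp}^{a,b}(q)$ into pieces for which double shuffle relations are already available, and then reassemble. Combining the identity $\gb^c_n+\bargb^c_n=\tz^c_{N;\sharp}(n)$ (a consequence of \eqref{equ:gbtzRel} and Corollary~\ref{cor:tzN}) with Definition~\ref{defn:extendGrsab} and Definition~\ref{defn:EandP} yields the key decomposition
\[
\tG_{r,s;\sharp}^{a,b}(q)\;=\;\tz^{a,b}_{N;\sharp}(r,s)+E_{r,s}^{a,b}(q)+I_{r,s}^{a,b}(q)-\frac{\vep_{r,s}^{a,b}(q)}{2N},
\]
in which all of the $\sharp$-dependence has been collected in the single constant term. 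In depth one, $\tG_{r;\sharp}^a(q)=\tz^a_{N;\sharp}(r)+\tg^a_r(q)$, which by \eqref{equ:ZN1regval} is in fact independent of $\sharp$.

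For the stuffle identity \eqref{equ:stuffleEisensteinG} (with $(r,s)\neq(1,1)$), I would expand $\tG_{r;\sharp}^a(q)\tG_{s;\sharp}^b(q)$ as the product of the two depth-one decompositions and split every $\tz$ cross-term via $\tz=\gb+\bargb$. The $\gb$-pieces combine with $\tg^a_r(q)\tg^b_s(q)$ to form $P_{r,s}^{a,b}(q)$ up to the $\gd$-corrections in its definition, whereas the $\bargb$-pieces assemble, via the first equality of Proposition~\ref{prop:fourierexpansion}, into $I_{r,s}^{a,b}(q)+I_{s,r}^{b,a}(q)$. Applying the stuffle halves of Proposition~\ref{prop:dbsfreg} and of Theorem~\ref{thm:EisenSeriesDS} (the latter rewrites $P_{r,s}^{a,b}(q)$ as $E_{r,s}^{a,b}(q)+E_{s,r}^{b,a}(q)+\gd_{a,b}E^a_{r+s}(q)$), and substituting back via the decomposition above, recovers $\tG^{a,b}_{r,s;\ast}(q)+\tG^{b,a}_{s,r;\ast}(q)+\gd_{a,b}\tG^a_{r+s;\sharp}(q)$. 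What remains are the $\vep$- and $\gd$-residuals; under the hypothesis $(r,s)\neq(1,1)$ the $\gam_N^{a,b}$-contributions are absent, the $\gd_{r,2},\gd_{s,2}$ derivative pieces in $\vep$ and $P$ cancel, and the surviving $\gd_{r,1},\gd_{s,1}$ terms from $\vep_{r,s}^{a,b}/(2N)$ collapse to exactly $(\gd_{s,1}\tg^a_r(q)+\gd_{r,1}\tg^b_s(q))/(2N)$, as required.

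The shuffle identity \eqref{equ:shuffleEisensteinG} is proved analogously, using the shuffle halves of Propositions~\ref{prop:dbsfreg}, \ref{thm:EisenSeriesDS} together with the second equality of Proposition~\ref{prop:fourierexpansion} (which now matches the $I$-terms into $\sum\binom{\cdot}{\cdot}I_{i,j}^{a+b,\cdot}(q)$). The asymmetry between $\gb$ and $\bargb$ under the shuffle passage from $(X,Y)$ to $(X+Y,Y)$ and $(X+Y,X)$, combined with the $\gd_{r,2},\gd_{s,2}$ derivative corrections in $P_{r,s}^{a,b}(q)$ and $\vep_{r,s}^{a,b}(q)$, produces precisely the residual $f_{r,s}^{a,b}(q)=\binom{k-2}{s-1}\bigl((\tg^{a+b}_{k-2})'(q)+\tg^{a+b}_{k-1}(q)\bigr)/N$ displayed on the right. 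The exceptional case \eqref{equ:G11case} is treated by hand, combining \eqref{equ:I11case} with the identity $(\tg^a_0)'(q)=Nf^a_2(q)$ from the remark in Section~6 and the defining relations \eqref{equ:gamStuffleAllN}--\eqref{equ:gamShuffleAllN} of $\gam_N^{a,b}$ and $\gl_N^{a,b}$ whose existence is supplied by Proposition~\ref{prop:gams}; the explicit constant $\tfrac12(f_2^a+f_2^b)$ and the $(\tg^{a+b}_0)'/N,\;\tg^{a+b}_1/N,\;(\tg^a_0)'/(2N),\;(\tg^b_0)'/(2N)$ terms emerge as the unavoidable boundary contributions at the lowest weight.

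The main obstacle will be the careful bookkeeping of all the low-weight correction terms — the $\vep_{r,s}^{a,b}(q)$ pieces, the $\gd$-supported corrections in $P_{r,s}^{a,b}(q)$, the constants $\gam_N^{a,b},\gl_N^{a,b}$, and the boundary values $(\tg^c_0)'(q)=Nf^c_2(q)$ — as they are transported through the stuffle and shuffle manipulations. The cleanest way to manage this is to work at the level of the generating series and to harvest the residuals directly from the rational-function identities \eqref{equ:gXY}--\eqref{equ:gbX+YY} already assembled in the proof of Theorem~\ref{thm:EisenSeriesDS}, so that the $\gd$-supported corrections appear as boundary (constant-in-$X,Y$ or linear-in-$X,Y$) contributions rather than having to be chased index by index.
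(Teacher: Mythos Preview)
Your proposal is correct and follows essentially the same route as the paper. The paper records the depth-two decomposition as $\tG_{r,s;\sharp}^{a,b}=\tz^{a,b}_{N;\sharp}(r,s)+\tg_{r,s}^{a,b}+I_{r,s}^{a,b}+\gb_{r,s}^{a,b}$ rather than your equivalent form $\tz^{a,b}_{N;\sharp}+E_{r,s}^{a,b}+I_{r,s}^{a,b}-\vep_{r,s}^{a,b}/(2N)$, but then invokes exactly the same three ingredients (Proposition~\ref{prop:dbsfreg} for the constant terms, Proposition~\ref{prop:fourierexpansion} for the $I$-block, Theorem~\ref{thm:EisenSeriesDS} for the $E$/$P$-block), computes the $\vep$-residuals directly by summing $\vep_{r,s}^{a,b}+\vep_{s,r}^{b,a}$ and $\sum\binom{i-1}{r-1}\vep_{i,j}^{a+b,b}+\binom{i-1}{s-1}\vep_{i,j}^{a+b,a}$, and handles \eqref{equ:G11case} by direct computation from \eqref{equ:I11case} and \eqref{equ:gamShuffleAllN}.
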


\begin{proof}
By Corollary \ref{cor:tzN} and \eqref{equ:gbtzRel}, we have
\begin{equation*}
   \bargb^a_{s}+\gb^a_{s} =\tz^a_{N}(s) .
\end{equation*}
So by the definitions,
\begin{align}
\tG_{r;\sharp}^{a}(q)=&\, \tz_{N;\sharp}^a(r)+\tg_r^a(q), \label{equ:decomptG}\\
\tG_{r,s;\sharp}^{a,b}(q)=&\, \tz^{a,b}_{N;\sharp}(r, s)
+\tg_{r,s}^{a,b}(q) +I_{r,s}^{a,b}(q)+\gb_{r,s}^{a,b}(q).\label{equ:decomptGdbl}
\end{align}\noindent
Thus
\begin{align*}
&\, \tG_{r;\sharp}^{a}(q)\tG_{s;\sharp}^{b}(q)+
\frac{\gd_{r,2}(\tg^b_{s})'(q) +\gd_{s,2}(\tg^{a}_{r})'(q)}{2N}\\
=&\,  \tz_{N;\sharp}^a(r)\tz_{N;\sharp}^b(s)+P_{r,s}^{a,b}(q)+\bargb_r^a \tg_s^b(q)
+\bargb_s^b \tg_r^a(q)  \\
=&\,  \tz_{N;\ast}^{a,b}(r,s)+E_{r,s}^{a,b}(q)
+\tz_{N;\ast}^{b,a}(s,r)+E_{s,r}^{b,a}(q)+\gd_{a,b} \Big(\tz_{N}^a(r+s)+E^{a}_{r+s}(q)\Big)+I_{r,s}^{a,b}(q)+I_{s,r}^{b,a}(q) \\
=&\,\sum_{\substack{i+j=r+s\\ i,j\ge 1}}  \bigg[\binom{i-1}{r-1}
\Big(\tz^{a+b,b}_{N;\sha}(i,j)+E_{i,j}^{a+b,b}(q) +I_{i,j}^{a+b,b}(q)\Big) \\
&\, \qquad\quad + \binom{i-1}{s-1} \Big(\tz^{a+b,a}_{N;\sha}(i,j)+E_{i,j}^{a+b,a}(q) +I_{i,j}^{a+b,a}(q)\Big) \bigg]
\end{align*}\noindent
by Proposition~\ref{prop:dbsfreg}, Proposition~\ref{prop:fourierexpansion} and Theorem~\ref{thm:EisenSeriesDS}.
Note that $(r,s)\ne(1,1)$ just because $r+s\ge 3$ when using Proposition \ref{prop:fourierexpansion}. Now by the definition
\begin{equation*}
    \vep_{r,s}^{a,b}(q)+\vep_{s,r}^{b,a}(q)
    =\gd_{s,2}(\tg^{a}_{r})'(q)+\gd_{r,2}(\tg^{b}_{s})'(q)
+ \gd_{s,1}\tg^a_{r}(q)+ \gd_{r,1}\tg^b_{s}(q).
\end{equation*}
Hence \eqref{equ:stuffleEisensteinG}
follows from Definition~\ref{defn:EandP},
\eqref{equ:decomptG}, and \eqref{equ:decomptGdbl}.
For \eqref{equ:shuffleEisensteinG} we need to compute
\begin{align*}
&\,\sum_{\substack{i+j=r+s\\ i,j\ge 1}}  \bigg[\binom{i-1}{r-1}
 \vep_{i,j}^{a+b,b}(q)
 + \binom{i-1}{s-1}\vep_{i,j}^{a+b,a}(q)  \bigg]\\
=&\,\sum_{\substack{i+j=r+s\\ i,j\ge 1}}  \bigg[\binom{i-1}{r-1}
\Big(\gd_{i,2}(\tg^{b}_{j})'(q) - \gd_{i,1}(\tg^{b}_{j-1})' (q)
+ \gd_{j,1}\big((\tg^{a+b}_{i-1})'(q) + \tg^{a+b}_{i}(q)\big) \Big) \\
&\, \qquad\quad +\binom{i-1}{s-1}
\Big(\gd_{i,2}(\tg^{a}_{j})'(q) - \gd_{i,1}(\tg^{a}_{j-1})' (q)
+ \gd_{j,1}\big((\tg^{a+b}_{i-1})'(q) + \tg^{a+b}_{i}(q)\big) \Big) \bigg]\\
=&\, \gd_{r,2}(\tg^{b}_s)'(q)+\gd_{s,2}(\tg^{a}_r)'(q)+
\left[\binom{k-2}{r-1} +\binom{k-2}{s-1}\right]
\Big((\tg^{a+b}_{k-2})'(q) + \tg^{a+b}_{k-1}(q)\Big)
\end{align*}\noindent
where $k=r+s$. This yields \eqref{equ:decomptGdbl} immediately.

Finally, \eqref{equ:G11case} follows from direct computation using
\eqref{equ:I11case} and \eqref{equ:gamShuffleAllN}. This finishes
the proof of the theorem.
\end{proof}

\begin{rem}
When $N=1$ Theorem~\ref{thm:EisenSeriesDSforG} reduces to \cite[Theorem 7]{GKZ2006}.
When $N=2$ Theorem~\ref{thm:EisenSeriesDSforG} reduces to
\cite[Theorem 3]{KanekoTa2013} with some correction there.
\end{rem}

\section{A key relation on multiple divisor functions at level $N$}\label{sec: multipleDivisorFunction}
In this section we prove a key result on multiple divisor functions at level $N$,
which will be used in the next section.

Let $\gf$ be Euler's totient function. We first need a lemma concerning some special
power sums of roots of unity.

\begin{lem} \label{lem:KeyetaSum}
Let $N=\prod_{t=1}^r p_t^{k_t}$ and $\eta$ be a primitive $N$-th root of unity.
For $\ga_t\le k_t$, $t=1,\dots,r$ (but $\bga=(\ga_1,\dots,\ga_r)\ne (k_1,\dots,k_r)$) we write
\begin{equation*}
 J(\bga)=J_N(\ga_1,\dots,\ga_r)=\{1\le \iij<N: \ p_t^{\ga_t}|\!| \iij \quad \forall   t=1,\dots,r\}.
\end{equation*}
Then for any choice of $r$-tuple of non-negative integers $(\ell_1,\dots,\ell_r)$ we have
\begin{equation*} 
\sum_{\iij\in J(\ga_1,\dots,\ga_r)} \eta^{\iij \prod_{t=1}^r p_t^{\ell_t} }
 =\left\{
    \begin{array}{ll}
      0, & \hbox{if $\ell_t\le k_t-\ga_t-2$ for some $t\le r$;} \\
      \prod_{t\in I} (-p_t^{\ell_t})   \prod_{s\not\in I} \gf(p_s^{k_s-\ga_s}), & \hbox{if $C_I$ holds,}
    \end{array}
  \right.
\end{equation*}
where $C_I$ is the condition that there is $I\subseteq\{1,\dots,r\}$ such that
$\ell_t=k_t-\ga_t-1$ $\forall t\in I$ and $\ell_s\ge k_s-\ga_s$  $\forall s\not\in I$.
\end{lem}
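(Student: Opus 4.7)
My plan is to reduce the sum to a classical Ramanujan sum and then split it into prime-power factors. Set $A=\prod_{t=1}^{r}p_t^{\ga_t}$ and $M=N/A=\prod_{t=1}^{r}p_t^{k_t-\ga_t}$; the assumption $\bga\ne(k_1,\dots,k_r)$ gives $M\ge 2$. The defining condition $p_t^{\ga_t}|\!|\iij$ for every $t$, combined with $1\le\iij<N=\prod_t p_t^{k_t}$, is exactly $\gcd(\iij,N)=A$, so the map $\iij\mapsto m=\iij/A$ is a bijection
\[
J(\bga)\ \longleftrightarrow\ \{m\in\Z:\ 1\le m<M,\ \gcd(m,M)=1\}.
\]
Writing $L=\prod_{t}p_t^{\ell_t}$ and using $N=AM$, we have $\eta^{\iij L}=e^{2\pi i\,mL/M}$ whenever $\iij=Am$, hence
\[
\sum_{\iij\in J(\bga)}\eta^{\iij L}\;=\;\sum_{\substack{1\le m<M\\ \gcd(m,M)=1}}e^{2\pi i\,mL/M}\;=\;c_M(L),
\]
the Ramanujan sum of modulus $M$.

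The next step is to split $c_M(L)$ by primes. By multiplicativity of the Ramanujan sum, $c_M(L)=\prod_{t=1}^{r}c_{p_t^{k_t-\ga_t}}(L)$, and the classical closed form gives, for $e\ge 1$,
\[
c_{p^{e}}(L)=\begin{cases}\gf(p^{e}),& p^{e}\mid L,\\ -p^{e-1},& v_p(L)=e-1,\\ 0,& v_p(L)\le e-2,\end{cases}
\]
with the convention $c_1(L)=1=\gf(1)$ covering the case $\ga_t=k_t$ (so $k_t-\ga_t=0$). Plugging in $e_t=k_t-\ga_t$ and $v_{p_t}(L)=\ell_t$: if some $\ell_t\le k_t-\ga_t-2$, then the $t$-th factor vanishes and so does the whole product; otherwise, letting $I=\{t:\ell_t=k_t-\ga_t-1\}$ and $I^c=\{s:\ell_s\ge k_s-\ga_s\}$, the product equals
\[
\prod_{t\in I}(-p_t^{\ell_t})\,\prod_{s\in I^c}\gf(p_s^{k_s-\ga_s}),
\]
which is exactly the formula in the lemma.

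The only real work is verifying the first bijection; after that, everything is bookkeeping. For a self-contained derivation that avoids invoking the Ramanujan sum by name, each prime-power factor can be obtained in a couple of lines from the identity
\[
\sum_{\substack{1\le m\le p^{e}\\ p\nmid m}}\xi^m\;=\;\sum_{m=1}^{p^{e}}\xi^m\;-\;\sum_{m'=1}^{p^{e-1}}\xi^{pm'}\qquad\Bigl(\xi=e^{2\pi i L/p^{e}}\Bigr),
\]
evaluated as a geometric series in the three cases $v_p(L)\ge e$, $v_p(L)=e-1$, and $v_p(L)\le e-2$.
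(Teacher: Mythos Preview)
Your approach is the same as the paper's---reduce to prime-power factors via the Chinese Remainder Theorem---and your packaging through the Ramanujan sum $c_M(L)$ is, if anything, tidier than the paper's explicit factorization $\sum_{i\in J(\bga)}\eta^{iL}=\prod_t\bigl(\sum_{i_t\in J_{N_t}(\ga_t)}\xi_t^{\,i_tp_t^{\ell_t}}\bigr)$.

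One point to watch in the bijection step: your identification of $J(\bga)$ with $\{1\le i<N:\gcd(i,N)=A\}$ is only literally correct when $\ga_t<k_t$ for every $t$. If $\ga_t=k_t$ for some (but not all) $t$, the exact-divisibility condition $p_t^{k_t}\|i$ is strictly stronger than $v_{p_t}(i)\ge k_t$; for example with $N=12$ and $\bga=(2,0)$ one gets $J(\bga)=\{4\}$, yet $\gcd(8,12)=4=A$ as well, so $i=8$ lies in your set but not in $J(\bga)$. The paper's own CRT step glosses over exactly the same boundary case (there $J_{N_t}(k_t)=\varnothing$ rather than a set of size $\gf(1)=1$), so this is really an imprecision in how $J(\bga)$ is written rather than a defect in your method; the formula you derive is the intended one and is what the subsequent theorem actually uses.
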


\begin{proof}
Suppose $\xi$ is a primitive $p^k$-th root of unity for some prime $p$. Then
for all $\ga< k$ we have
\begin{equation}\label{equ:singlePrimePowerSum}
\sum_{p^\ga |\!| \iij, 1\le \iij< p^k } \xi^{\iij}=
\sum_{p^\ga | \iij, 1\le \iij< p^k} \xi^{\iij}
-\sum_{p^{\ga+1} | \iij, 1\le \iij< p^k} \xi^{\iij}
=\left\{
    \begin{array}{ll}
      -1, & \hbox{if $\ga=k-1$;} \\
      0, & \hbox{if $\ga<k-1$,}
    \end{array}
  \right.
\end{equation}
since for any divisor $D$ of $p^k$ we have
$$\sum_{D| \iij, 1\le \iij<p^k} \xi^{ \iij}=
\left\{
  \begin{array}{ll}
    -1, & \hbox{if $D<p^k$;} \\
    0, & \hbox{if $D=p^k$.}
  \end{array}
\right. $$
Let $N_t=p_t^{k_t}$ for all $t=1,\dots,r$. It is well-known that $\eta$ can be decomposed as $\eta=\prod_{t=1}^r \xi'_t$
where $\xi'_t$ is a primitive $N_t$-th root of unity for each $t$. Then $\xi_t=(\xi'_t)^{\prod_{s\ne t,1\le s\le r} p_s^{\ell_s}}$
is still a primitive $N_t$-th root of unity. By the Chinese Remainder Theorem it is easy to see that
\begin{equation*} 
\sum_{\iij\in J(\ga_1,\dots,\ga_r)} \eta^{\iij \prod_{t=1}^r p_t^{\ell_t} }
 = \prod_{t=1}^r \left(\sum_{\iij_t\in J_{N_t}(\ga_t)} \xi_t^{\iij_t  p_t^{\ell_t} } \right).
\end{equation*}
The lemma now follows from \eqref{equ:singlePrimePowerSum} and the fact that $|J_{N_t}(\ga_t)|=\gf(p_t^{k_t-\ga_t})$.
\end{proof}

Recall that for a $a\in\Z/N\Z$ we have defined the level $N$ divisor functions
$\gs_1^a(m)=\sum_{nu=m} \eta^{au} u$ and $\gk_1^a(m)=\sum_{nu=m} \eta^{au} n.$
\begin{thm} \label{thm:AllLevels}
Let $N=\prod_{\iij=1}^r p_\iij^{k_\iij} $ where $p_1,\dots,p_r$ are pairwise distinct prime
factors of $N$. Then for all $m\in\N$ we have
\begin{multline}\label{equ:AllLevels}
 \sum_{\gcd(N,\iij)=1, 1\le \iij<N}  \gs^\iij_1(m)- \gf(N)\gs^0_1(m) =\\
 \sum_{I \subseteq \{1,\dots,r\}} \left(  \prod_{t\in I}\gf(p_t^{k_t} ) \prod_{s\not\in I} p_s^{k_s}
\sum_{p_t|\iij \ \forall t\in I, p_s\nmid \iij \ \forall s\not\in I, 1\le \iij<N} \gk^\iij_1(m) \right) .
\end{multline}
\end{thm}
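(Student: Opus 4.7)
The plan is to expand both sides using the definitions of $\gs_1^\iij$ and $\gk_1^\iij$ and then invoke Lemma~\ref{lem:KeyetaSum} to evaluate the resulting root-of-unity sums.

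First, since $\gs_1^\iij(m) = \sum_{nu=m}\eta^{\iij u} u$, the left-hand side of \eqref{equ:AllLevels} becomes
\begin{equation*}
L(m) := \sum_{u\mid m} u \bigl(c_N(u) - \gf(N)\bigr),
\end{equation*}
where $c_N(u) = \sum_{\gcd(\iij,N)=1,\,1\le\iij<N}\eta^{\iij u}$ is the classical Ramanujan sum. Similarly, since $\gk_1^\iij(m) = \sum_{nu=m}\eta^{\iij u} n$, the right-hand side becomes
\begin{equation*}
R(m) := \sum_{u\mid m}\frac{m}{u}\sum_{I\subseteq\{1,\dots,r\}} C_I\,h_I(u),
\end{equation*}
with $C_I = \prod_{t\in I}\gf(p_t^{k_t})\prod_{s\notin I}p_s^{k_s}$ and $h_I(u) = \sum_{\iij\in J_I}\eta^{\iij u}$. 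Swapping the roles of $n$ and $u$ in the divisor parameterization of $m$ on the right yields $R(m) = \sum_{u\mid m} u\sum_I C_I\,h_I(m/u)$, so the theorem reduces to the vanishing of
\begin{equation*}
\sum_{u\mid m} u\Bigl(c_N(u) - \gf(N) - \sum_I C_I\,h_I(m/u)\Bigr).
\end{equation*}

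Next, I would apply Lemma~\ref{lem:KeyetaSum} to evaluate both $c_N(u)$ and each $h_I(m/u)$ in closed form. Writing any positive integer $w$ as $w = w'\prod_t p_t^{\ell_t(w)}$ with $\gcd(w',N)=1$, the unit $w'$ permutes each $J(\bga)$, so both quantities depend only on the valuation vectors $(\ell_t(u))$ and $(\ell_t(m/u))$. For $c_N(u)$ the lemma is applied with $\bga = (0,\dots,0)$; for each $h_I(m/u)$, one decomposes $J_I = \bigsqcup_\bga J(\bga)$ over admissible vectors $\bga$ (those with $\ga_t\ge 1$ for $t\in I$, $\ga_s = 0$ for $s\notin I$, and $\bga \ne (k_1,\dots,k_r)$), and applies the lemma term by term. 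The lemma produces closed-form values that factor as products over the primes $p_t \mid N$.

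Finally, I would verify the resulting combinatorial identity. A small numerical check shows that the bracket $c_N(u) - \gf(N) - \sum_I C_I h_I(m/u)$ is generally nonzero at individual divisors $u$; only the global sum over $u\mid m$ vanishes, because the $I$-sum on the right reorganizes the contributions of the pair $(u, m/u)$ in a way complementary to the $u$-weighted sum in $L(m)$. I would then group divisors $u\mid m$ by their valuation pattern at each $p_t$, the three cases $v_{p_t}(u) \le k_t-2$, $= k_t - 1$, and $\ge k_t$ matching the three branches of the lemma, and verify the prime-by-prime cancellation using the multiplicative factorization provided by the Chinese Remainder Theorem. The main obstacle is the combinatorial bookkeeping of three nested indices---the subset $I$, the vector $\bga$, and the valuation vector of $u$---together with the swap $n\leftrightarrow u$; I expect the identity to ultimately reduce to elementary identities among $\gf(p_t^{k_t})$, $p_t^{k_t}$, and $-p_t^{k_t-1}$ for each prime factor of $N$.
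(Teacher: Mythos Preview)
Your overall strategy---expand both sides, invoke Lemma~\ref{lem:KeyetaSum} on the root-of-unity sums, and reduce to prime-by-prime factors via the Chinese Remainder Theorem---is the paper's, and your setup through the displayed expression for $L(m)-R(m)$ is correct. The divergence is in the order of operations, and this is exactly where your proposal stalls.

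You fix a divisor $u\mid m$, evaluate $c_N(u)$ and $h_I(m/u)$ separately, and then hope to sum over $u$. As you yourself note, the bracket does not vanish termwise because its two pieces depend on $u$ and on $m/u$ respectively; this coupling is the source of the ``three nested indices'' obstacle you name but do not resolve. The paper avoids it by \emph{not} fixing $u$. Instead, for each $\bga$ it evaluates the full sums $\sum_{\iij\in J(\bga)}\gs_1^\iij(m)$ and $\sum_{\iij\in J(\bga)}\gk_1^\iij(m)$ at once, carrying out the divisor sum (parameterized by the valuations $\ell_t=v_{p_t}(u)$, $0\le \ell_t\le e_t=v_{p_t}(m)$) \emph{inside} the $\eta$-sum. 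After Lemma~\ref{lem:KeyetaSum} this collapses to a product $Q\prod_t(\cdots)$, where $Q$ is the contribution of the part of $m$ coprime to $N$ and each remaining factor depends only on $e_t$, $k_t$, and $\ga_t$. The sum over $\bga$ is then organized by the partition $\{1,\dots,r\}=\gL_1\amalg\gL_2\amalg\gL_3$ according to whether $\ga_t=0$, $1\le\ga_t<k_t$, or $\ga_t=k_t$; the $I$-sum on the right-hand side and the single block $J(0,\dots,0)$ on the left both reduce to the same closed form $Q\prod_t F_t-\gf(N)\gs_1^0(m)$, with $F_t$ given explicitly.

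So your proposal is not wrong, but it is incomplete at the decisive step, and the order you chose makes that step harder than necessary. Reversing it---sum over divisors first, then over $\iij\in J(\bga)$, then over $\bga$---removes the $u$ versus $m/u$ coupling and turns your anticipated ``elementary identities among $\gf(p_t^{k_t})$, $p_t^{k_t}$, $-p_t^{k_t-1}$'' into a direct evaluation.
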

\begin{proof}
To save space we put $[r]=\{1,\dots,r\}$.
Let $e_t\ge 0$ for all $t\in [r]$ and assume $m= \prod_{t=1}^r p_t^{e_t} \prod_{\iij=r+1}^R p_\iij^{e_\iij}$ where $p_1,\dots,p_R$ are
pairwise distinct primes. Set $Q=\prod_{\iij=r+1}^R  (1+p_\iij+\cdots+p_\iij^{e_\iij})$ ($Q=1$ if none of $p_{r+1},\dots,p_R$ appears).

If $\ell_t\le e_t$ for all $t\in [r]$ then
\begin{align*}
 & \sum_{un=m,p_t^{\ell_t} |\!|u \  \forall t\in [r]} \ \sum_{\iij\in J(\ga_1,\dots,\ga_r)} \eta^{\iij u} u  =
 Q  \prod_{t=1}^r p_t^{\ell_t}  \sum_{\iij\in J(\ga_1,\dots,\ga_r)} \eta^{\iij  \prod_{t=1}^r p_t^{\ell_t} } .
\end{align*}\noindent
If $\ga_t> k_t$ for some $t\in[r]$ then $\sum_{\iij\in J(\ga_1,\dots,\ga_r)}  \gs^\iij_1(m)=0$  by the definition of $J$.
For any partition $[r]=\amalg \boldsymbol{\gL}=\gL_1\amalg \gL_2\amalg \gL_2$ with
$\boldsymbol{\gL}=(\gL_1, \gL_2, \gL_3)\ne (\emptyset,\emptyset,[r])$ we write
$\bga=(\ga_1,\dots,\ga_r) \vdash\boldsymbol{\gL}$
if $\ga_t=0$ for all $t\in\gL_1$, $1\le \ga_t<k_t$ for all $t\in\gL_2$ and $\ga_t=k_t$ for all $t\in\gL_3$.
We remove the case $\boldsymbol{\gL}=(\emptyset,\emptyset,[r])$ since
$(\ga_1,\dots,\ga_r)\ne (k_1,\dots,k_r)$.
For such $\bga$ we have by Lemma~\ref{lem:KeyetaSum}
{\allowdisplaybreaks
\begin{align*}
\sum_{\iij\in J(\ga_1,\dots,\ga_r)}  \gk^\iij_1(m)
=& Q\prod_{t\not\in\gL_3}\Big(-p_t^{e_t}+\gf(p_t^{k_t-\ga_t})\sum_{\ell_t=k_t-\ga_t}^{e_t}p_t^{e_t-\ell_t}\Big)
    \prod_{t\in\gL_3}\Big( \sum_{\ell_t=0}^{e_t} p_t^{e_t-\ell_t}\Big)\\
 = & Q\prod_{t\not\in\gL_3} \Big(-p_t^{k_t-\ga_t-1}\Big)  \prod_{t\in\gL_3}\frac{p_t^{e_t+1}-1}{p_t-1}   , \\
\sum_{\iij\in J(\ga_1,\dots,\ga_r)}  \gs^\iij_1(m)
=& Q\prod_{t\not\in\gL_3}\Big(-p_t^{2(k_t-\ga_t-1)}+ \gf(p_t^{k_t-\ga_t})\sum_{\ell_t=k_t-\ga_t}^{e_t}p_t^{\ell_t}\Big)
    \prod_{t\in\gL_3}\Big( \sum_{\ell_t=0}^{e_t} p_t^{\ell_t}\Big)\\
 = & Q\prod_{t\not\in\gL_3} \Big(-p_t^{k_t-\ga_t-1}(p_t^{e_t+1}-p_t^{k_t-\ga_t}-p_t^{k_t-\ga_t-1})\Big)  \prod_{t\in\gL_3}\frac{p_t^{e_t+1}-1}{p_t-1}   , \\
\end{align*}}\noindent
if $e_t\ge k_t-1$ for all $t\in[r]$. If $e_t<k_t-1$ then
$$\sum_{\iij\in J(\ga_1,\dots,\ga_r)}  \gk^\iij_1(m)=\sum_{\iij\in J(\ga_1,\dots,\ga_r)}  \gs^\iij_1(m)=0$$
when $\ga_t<k_t-e_t-1$ for some $t\in[r]$. Therefore we have:
$$\sum_{\iij\in J(0,\dots,0)}  \gs^\iij_1(m)=
\left\{
  \begin{array}{ll}
   0 , \hfill \hbox{if $e_t<k_t-1$ for some}& t\in[r]; \\
   \displaystyle Q\prod_{t\not\in\gL_3} \Big(-p_t^{k_t-1}(p_t^{e_t+1}-p_t^{k_t}-p_t^{k_t-1})\Big) \prod_{t\in\gL_3}\frac{p_t^{e_t+1}-1}{p_t-1}  , & \hbox{otherwise.}
  \end{array}
\right.
$$
Now we write $\sum'_{\amalg\boldsymbol{\gL}=[r]}$ to mean that in the sum
$\boldsymbol{\gL}=(\gL_1, \gL_2, \gL_3)$ runs
through all partitions of $[r]$ into three parts except for
$(\emptyset,\emptyset,[r])$. Then
{\allowdisplaybreaks
\begin{align*}
    & \sum_{I \subseteq \{1,\dots,r\}} \left(  \prod_{t\in I}\gf(p_t^{k_t} ) \prod_{s\not\in I} p_s^{k_s}
\sum_{p_t|\iij \ \forall t\in I, p_s\nmid \iij \ \forall s\not\in I, 1\le \iij<N} \gk^\iij_1(m) \right) \\
=& \underset{\amalg\boldsymbol{\gL}=[r]} {\sum{}'}\sum_{\bga\vdash\boldsymbol{\gL}}
Q\prod_{t\in\gL_1} \Big(p_t^{k_t}(-p_t^{k_t-1})\Big) \prod_{t\in\gL_2} \Big(\gf(p_t^{k_t})(-p_t^{k_t-1})\Big)
\prod_{t\in\gL_3} \gf(p_t^{k_t})  \Big(\frac{p_t^{e_t+1}-1}{p_t-1}\Big) \\
=& Q\prod_{t=1}^r F_t- Q\prod_{t=1}^r   \gf(p_t^{k_t})\frac{p_t^{e_t+1}-1}{p_t-1} \\
=& Q\prod_{t=1}^r F_t-   \gf(N)\gs_1^0(m)
\end{align*}}\noindent
where  if $e_t<k_t-1$ then
\begin{equation*}
F_t=-\left(\sum_{\ga_t=k_t-e_t-1}^{k_t-1}  \gf(p_t^{k_t}) p_t^{k_t-\ga_t-1} \right)+ \gf(p_t^{k_t})\frac{p_t^{e_t+1}-1}{p_t-1}\\
=0,
\end{equation*}
and if $e_t\ge k_t-1$ then
\begin{align*}
F_t=& -p^{k_t}  p^{k_t-1}-\left(\sum_{\ga_t=1}^{k_t-1}  \gf(p_t^{k_t}) p_t^{k_t-\ga_t-1} \right)+ \gf(p_t^{k_t})\frac{p_t^{e_t+1}-1}{p_t-1}\\
=&-p_t^{k_t-1}(p_t^{e_t+1}-p_t^{k_t}-p_t^{k_t-1}).
\end{align*}\noindent
The theorem now follows at once.
\end{proof}

\begin{cor} \label{cor:AllLevels}
Let $N=\prod_{\iij=1}^r p_\iij^{k_\iij} $ where $p_1,\dots,p_r$ are pairwise distinct prime
factors of $N$. Then we have
\begin{multline*}
 \sum_{\gcd(N,\iij)=1, 1\le \iij<N}  \tg^\iij_2(q)- \gf(N)f_2^0(q) =\\
 \sum_{I \subseteq \{1,\dots,r\}} \left(  \prod_{t\in I}\gf(p_t^{k_t} ) \prod_{s\not\in I} p_s^{k_s}
\sum_{p_t|\iij \ \forall t\in I, p_s\nmid \iij \ \forall s\not\in I, 1\le \iij<N} f^\iij_2(q) \right) .
\end{multline*}
\end{cor}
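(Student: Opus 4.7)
The plan is to observe that Corollary~\ref{cor:AllLevels} is just the $q$-series incarnation of Theorem~\ref{thm:AllLevels}, obtained by reading off the coefficient of $q^m$ on both sides. The corollary therefore requires no new combinatorial input beyond the generating-function identifications of $\tg^a_2(q)$ and $f^a_2(q)$.

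First I would unpack the Fourier expansions. By Corollary~\ref{cor:productPsi} applied at depth one with $s=2$, we have $g^a_2(\tau) = \frac{(-2\pi i)^2}{N^2}\sum_{m\ge 1}\gs^a_1(m)\,q^m$, so
\begin{equation*}
\tg^a_2(q) = (2\pi i)^{-2} g^a_2(\tau) = \frac{1}{N^2}\sum_{m\ge 1}\gs^a_1(m)\,q^m,
\end{equation*}
while \eqref{equ:f2} gives $f^a_2(q) = \frac{1}{N^2}\sum_{m\ge 1}\gk^a_1(m)\,q^m$. When $a = \bar{0}$ one has $\gs^0_1(m)=\gk^0_1(m) = \sum_{uv=m} v$, so $\tg^0_2(q) = f^0_2(q)$. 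This is precisely what allows the $\gf(N)\gs^0_1(m)$ term in Theorem~\ref{thm:AllLevels} to be reinterpreted as the $\gf(N) f^0_2(q)$ term on the left-hand side of Corollary~\ref{cor:AllLevels}.

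Second, I would take the identity of Theorem~\ref{thm:AllLevels}, divide both sides by $N^2$, and sum against $q^m$ over $m\ge 1$. On the left the two sums telescope term-by-term into $\sum_{\gcd(N,\iij)=1,\,1\le \iij<N}\tg^\iij_2(q) - \gf(N)f^0_2(q)$. On the right, for each subset $I\subseteq\{1,\dots,r\}$, the factor $\prod_{t\in I}\gf(p_t^{k_t})\prod_{s\notin I}p_s^{k_s}$ is independent of $m$, while $\frac{1}{N^2}\sum_{m\ge 1}\gk^\iij_1(m)q^m = f^\iij_2(q)$ assembles exactly into the inner sums of Corollary~\ref{cor:AllLevels}. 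No regrouping or rearrangement is required; the two identities are term-by-term equivalent.

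There is no real obstacle here since all the arithmetic difficulty—the root-of-unity cancellations isolated in Lemma~\ref{lem:KeyetaSum} and the partition-over-$[r]$ bookkeeping—has already been absorbed into Theorem~\ref{thm:AllLevels}. The only point to watch is the normalization: since both $\tg^a_2(q)$ and $f^a_2(q)$ carry the same factor $1/N^2$, this constant cancels cleanly, and the equality of $\gs^0_1$ and $\gk^0_1$ for $a=\bar{0}$ guarantees the ``diagonal'' correction term on the left matches up correctly.
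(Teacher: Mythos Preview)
Your proposal is correct and is exactly what the paper intends: Corollary~\ref{cor:AllLevels} is stated without proof immediately after Theorem~\ref{thm:AllLevels}, so it is meant to follow at once by multiplying \eqref{equ:AllLevels} by $q^m/N^2$ and summing over $m\ge 1$, using the Fourier expansions $\tg^a_2(q)=N^{-2}\sum_{m\ge 1}\gs^a_1(m)q^m$ from Corollary~\ref{cor:productPsi} and $f^a_2(q)=N^{-2}\sum_{m\ge 1}\gk^a_1(m)q^m$ from \eqref{equ:f2}. Your observation that $\gs^0_1=\gk^0_1$ (so that $\tg^0_2=f^0_2$, as also noted in the paper's Remark) is precisely what justifies writing the subtracted term as $\gf(N)f^0_2(q)$.
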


\begin{eg} \label{eg:PrimePowerCase}
Let $p$ be a prime and the level $N=p^k$. For any $m\in \N$  we have
\begin{equation*}
\sum_{p\nmid \iij, 1\le \iij<N} \tg^\iij_2(q)-\gf(N)f^0_2(q)
=\gf(N)\sum_{p|\iij, 1\le \iij<N}f^\iij_2(q)+N\sum_{p\nmid \iij, 1\le \iij<N} f^\iij_2(q) .
\end{equation*}
\end{eg}

\begin{eg} \label{eg:2PrimePowerCase}
Let $p_1$ and $p_1$ be two distinct primes and $N=p_1^j p_2^k$.
Then we have
\begin{multline*}
 \sum_{p_1\nmid \iij, p_2\nmid \iij, 1\le \iij<N}  \tg^\iij_2(q)-\gf(N)f^0_2(q)
=\gf(N)\sum_{p_1p_2|\iij, 1\le \iij<N}  f^\iij_2(q) \\
  +p_1^j \gf(p_2^k)\sum_{p_1\nmid\ell,p_2|\iij, 1\le \iij<N} f^\iij_2(q)
 + \gf(p_1^j) p_2^k\sum_{p_1|\ell,p_2\nmid\iij, 1\le \iij<N} f^\iij_2(q)
 +N \sum_{p_1\nmid \iij,p_2\nmid \iij,  1\le \iij<N}  f^\iij_2(q).
\end{multline*}
\end{eg}

\section{A linear algebra problem}
In this section, using the standard techniques from linear algebra
and the key result on the multiple divisor functions at level $N$ proved in the
proceeding section we will derive the solvability
of a system of linear equations associated with \eqref{equ:gamStuffleAllN}
and \eqref{equ:gamShuffleAllN} for every positive integer $N$. This completes the proof of
our main result on the level $N$ Eisenstein series given in Theorem~\ref{thm:EisenSeriesDS}.

For every positive integer $N$ we let  $\nu(N)$ be the number of its positive divisors
(including 1 and $N$ itself).
\begin{thm} \label{prop:gams}
For every positive integer $N$ the system
\eqref{equ:gamStuffleAllN} together with \eqref{equ:gamShuffleAllN} has infinitely
many sets of solutions of $\gam_N^{a,b}$ and $\gl_N^{a,b}=\gl_N^{b,a}$ ($0\le a,b<N$)
in terms of $f_2^a(q)$ and $\tg_2^a(q)$ ($0\le a<N$). Moreover one can always choose
\begin{equation}\label{equ:freeVar}
 \{\gam_N^{a,b}: 0\le b<a<N\}\cup \{\gam_N^{N-a,N-a}: 1\le a\le N, a|N\}
\end{equation}
as the $N(N-1)/2+\nu(N)$ free variables.
\end{thm}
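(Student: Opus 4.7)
The plan is to first eliminate the $\gl_N^{a,b}$ variables using the stuffle relations \eqref{equ:gamStuffleAllN}, which yield
$$\gl_N^{a,a} = \gam_N^{a,a} - \tg^a_2, \qquad \gl_N^{a,b} = \tfrac12(\gam_N^{a,b} + \gam_N^{b,a}) \text{ for } a \ne b.$$
Substituting into the shuffle relations \eqref{equ:gamShuffleAllN} produces a reduced system in the $\gam_N^{a,b}$'s alone:
\begin{align*}
\text{(I):}\quad & \gam_N^{2a,a} - \gam_N^{a,a} = f^a_2 - \tg^a_2 \qquad (a \in \Z/N\Z),\\
\text{(II):}\quad & \gam_N^{a+b,a} + \gam_N^{a+b,b} - \gam_N^{a,b} - \gam_N^{b,a} = f^a_2 + f^b_2 \qquad (a \ne b).
\end{align*}
This amounts to $N$ equations of type (I) and $\binom{N}{2}$ (unordered) equations of type (II), that is, $N(N+1)/2$ equations in $N^2$ unknowns. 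The instance (I) at $a=0$ degenerates to $f^0_2 = \tg^0_2$, which is immediate from Lemma~\ref{lem:Psi}. Thus once the solvability of (I)+(II) is established, infinitely many $q$-series solutions follow automatically, and the $\gl_N^{a,b}$'s are reconstructed uniquely.

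The second step is to propagate the free parameters of \eqref{equ:freeVar} through (I) and (II) to determine every $\gam_N^{a,b}$. Specializing (II) at $b=0$ yields $\gam_N^{a,a} - \gam_N^{0,a} = f^a_2 + f^0_2$, which recovers the ``first row'' $\gam_N^{0,a}$ from the diagonal; a suitable induction on the pairs $(a,b)$ with $a<b$ (ordered, say, lexicographically) then allows each remaining equation of type (II) to express the upper-triangular entry $\gam_N^{a,b}$ in terms of the free lower-triangular entries and of diagonal entries. The diagonal entries are themselves interlinked by (I) and (II) at $b=0$, and I expect these couplings to partition $\Z/N\Z$ into $\nu(N)$ orbits indexed by the divisors $d\mid N$ (naturally the $\gcd$-classes $\{a\in\Z/N\Z: \gcd(a,N)=d\}$), so that one initial diagonal value per orbit suffices. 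Taking $\gam_N^{N-d,N-d}$ as that initial value is legitimate since $\gcd(N-d,N)=d$ whenever $d\mid N$, giving the $\nu(N)$ diagonal free parameters.

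The main obstacle is the consistency check. A dimension count shows that exactly $\nu(N)$ linear combinations of the equations in (I)+(II) must collapse into identities purely among the $f^a_2$'s and $\tg^a_2$'s. One such identity is the trivial $f^0_2=\tg^0_2$, while for each non-trivial divisor $d\mid N$ an additional identity is produced by summing (I) over $a$ in the orbit associated with $d$ and closing the orbit via appropriate (II)'s. I claim that the resulting identity is exactly an instance of Corollary~\ref{cor:AllLevels}, applied at level $N/d$ after re-indexing. The small cases support this expectation: for $N=2, d=1$ one obtains $\tg^1_2 - f^0_2 = 2f^1_2$, and for $N=3, d=1$ one obtains $\tg^1_2+\tg^2_2-2f^0_2 = 3(f^1_2+f^2_2)$, both of which match Example~\ref{eg:PrimePowerCase} verbatim. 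The crux of the proof is to carry out this matching in full generality, reducing each of the $\nu(N)$ consistency identities to an instance of the root-of-unity sum identity Lemma~\ref{lem:KeyetaSum}. Once consistency is secured, Steps~1--2 show that the $N(N-1)/2+\nu(N)$ parameters in \eqref{equ:freeVar} may be chosen as arbitrary $q$-series, completing the proof.
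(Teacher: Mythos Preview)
Your strategy is essentially the paper's: both eliminate the $\gl$'s (your substitution is equivalent to the paper's passage from \eqref{equ:gamStuffleAllN}+\eqref{equ:gamShuffleAllN} to the system (LS$_2^{a,b}$)+(LS$_3^a$), which are precisely your (II) and (I) with signs reversed), both organize the consistency obstructions by the $\gcd$-classes $\{a:\gcd(a,N)=d\}$, and both reduce each obstruction to the instance of Corollary~\ref{cor:AllLevels} at level $N/d$. Your lexicographic induction on the first index is a clean way to express every upper-triangular $\gam^{a,b}$ ($a<b$) in terms of the diagonal and the free lower-triangular block, and it does work: for $a+b<N$ the terms $\gam^{a+b,a},\gam^{a+b,b}$ are already lower-triangular, while for $a+b\ge N$ they have first index $a+b-N<a$, so the induction closes.

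The one genuine gap is the rank lower bound for the diagonal. After your lex induction, the equations (I) become linear relations among the $\gam^{a,a}$'s (plus free lower-triangular terms and constants), and you assert --- but do not prove --- that within each $\gcd$-$d$ orbit these relations have corank exactly one, with $\gam^{N-d,N-d}$ free. This is the heart of the matter: the paper supplies it via an explicit binary-tree recursion (Proposition~\ref{lem:rankAN}). Starting from $\gam^{a,a}=\gam^{2a-N,a}+\cdots$ for $N/2<a<N$ with $(N-a)\nmid N$, one repeatedly expands the upper-triangular node via (LS$_2$); every descendant has the form $\gam^{ma-(m-1)N,\,na-(n-1)N}$ with strictly decreasing weight, and the hypothesis $(N-a)\nmid N$ is exactly what guarantees that no node degenerates (first index never $0$, indices never equal, weight never hits $N$), so the tree terminates in nodes covered by your base case $a+b<N$. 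Without this argument (or an equivalent one) you have established only that there are \emph{at least} $\nu(N)$ dependencies, not that there are no further ones --- so you cannot yet conclude that the variables in \eqref{equ:freeVar} form a complete set of free parameters.
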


Before giving its proof, we first analyze the linear system in Proposition~\ref{prop:gams} using
standard techniques from linear algebra. Let $\bfx_N$ be
a column vector with $(3N^2+N)/2$ components whose transpose is
\begin{align*}
 {}^{t}\bfx_N=(
& \gl_N^{0,0}, \gl_N^{0,1}, \gl_N^{0,2}, \ldots, \gl_N^{N-1,N-1},
\gam_N^{0,1}, \gam_N^{0,2}, \gam_N^{0,3},\ldots, \gam_N^{N-2,N-1},  \\
&\gam_N^{0,0}, \gam_N^{1,1}, \gam_N^{2,2},\ldots, \gam_N^{N-1,N-1},
\gam_N^{1,0}, \gam_N^{2,0},\gam_N^{2,1}, \ldots, \gam_N^{N-1,N-2}).
\end{align*}
Here the rule to list the entries is to use lexicographic order for  $\gl_N^{a,b}$ ($0\le a\le b<N$), then
$\gam_N^{a,b}$ ($0\le a<b<N$), then $\gam_N^{a,a}$ ($0\le a<N$), and finally  $\gam_N^{a,b}$ ($0\le b<a<N$).
Then we can rewrite the system \eqref{equ:gamStuffleAllN} together with \eqref{equ:gamShuffleAllN}
as follows:

\bigskip

$\quad  \ \left\{  \phantom{\frac{\raisebox{.7cm}{A}}{B} } \right. $
\vskip-2.5cm
\begin{alignat}{2}
 \gam_N^{a+b,b}+\gam_N^{a+b,a}-2\gl_N^{a,b}= & \phantom{A} f^{a}_{2}+f^{b}_{2},  & \quad \forall 0\le a\le b<N,
  \qquad  \label{equ:newsystem1} \tag{LS$_1^{a,b}$} \\
 \gam_N^{a,b}+\gam_N^{b,a}-\gam_N^{a+b,a}-\gam_N^{a+b,b } = & -f^{a}_{2}-f^{b}_{2}, & \quad \forall 0\le a<b<N,
    \qquad  \label{equ:newsystem2} \tag{LS$_2^{a,b}$}  \\
 \gam_N^{a,a}-\gam_N^{2a,a} =  &\phantom{A} \tg^{a}_2-f^{a}_{2}, \     & \quad \forall 1 \le a <N, \
 \quad  \qquad   \label{equ:newsystem3}\tag{LS$_3^a$}
\end{alignat}\noindent
where the last two families of the equations are obtained by taking 
the difference of \eqref{equ:gamStuffleAllN} and \eqref{equ:gamShuffleAllN}.
We then can express this system by a single matrix equation
\begin{equation}\label{equ:LinearAlgSystem}
A_N \bfx_N=\bfb_N
\end{equation}
for some matrix $A_N$ of size $(N^2+N-1)\times (3N^2+N)/2$ and a column vector
$\bfb_N$ of length  $N^2+N$ whose entries are given
in terms of $f_2^a$'s and $\tg_2^a$'s only. Notice that since (LS$_3^0$) 
is trivial the row size is decreased from $N^2+N$ by one. To prove the 
proposition one thing we need to show is that every row vector in the 
left null space $\calN(A_N)$  of $A_N$ annihilates $\bfb_N$.

\begin{eg} When $N=1$ we get the equation
$$[-2 \quad 2]
\left[\begin{matrix}
\gl_1^{0,0} \\
\gam_1^{0,0}
\end{matrix}\right]
=2f_2^0.$$
Clearly $\calN(A_1)=\emptyset$ and we may choose
$\gam_1^{0,0}$ arbitrarily and then set $\gl_1^{0,0}=\gam_1^{0,0}=f^0_2$.
\end{eg}

\begin{eg} When $N=2$ we get the equation
\begin{equation}\label{equ:A2}
A_2 \bfx_2=\left[\begin{matrix}
 -2 & 0 & 0 & 0 & 2 & 0 & 0 \\
 0 & -2 & 0 & 0 & 0 & 1 & 1 \\
 0 & 0 & -2 & 2 & 0 & 0 & 0 \\
 0 & 0 & 0 & -1 & 0 & 1 & 0\\
 0 & 0 & 0 & 1 & 0 & -1 & 0
\end{matrix}\right]
\left[\begin{matrix}
\gl_2^{0,0}\\
\gl_2^{0,1}\\
\gl_2^{1,1}\\
\gam_2^{0,1} \\
\gam_2^{0,0} \\
\gam_2^{1,1} \\
\gam_2^{1,0}
\end{matrix}\right]
=\left[\begin{matrix}
f_2^0 \\
f_2^0+f_2^1 \\
f_2^1 \\
\tg_2^1- f_2^1\\
-f_2^0-f_2^1
\end{matrix}\right]=\bfb_2.
\end{equation}
Then $\calN(A_2)$ is spanned by the vector $\bfn_2=(0,0,0,0,1,1)$. We see that
$$ \bfn_2\cdot \bfb_2=f_2^0+2f_2^1-\tg_2^1=0$$
which follows from Example~\ref{eg:PrimePowerCase} by taking $p=2$ and $k=1$ there.
This implies that the system \eqref{equ:A2} has infinitely many solutions.
Setting  $\gam_2^{a,b}=0$ for $1\ge a\ge b\ge 0$ (in fact,
one may choose them arbitrarily as they are free variables)
we only need to solve the system
$$A'_2 \bfx_2=\left[\begin{matrix}
 -2 & 0 & 0 & 0 \\
 0 & -2 & 0 & 0 \\
 0 & 0 & -2 & 2 \\
 0 & 0 & 0 & 1
\end{matrix}\right]
\left[\begin{matrix}
\gl_2^{0,0}\\
\gl_2^{0,1}\\
\gl_2^{1,1}\\
\gam_2^{0,1}
\end{matrix}\right]
=\left[\begin{matrix}
f_2^0 \\
f_2^0+f_2^1 \\
f_2^1 \\
-f_2^0-f_2^1
\end{matrix}\right]=\bfb'_2.$$
Here we obtain $A'_2$ from $A_2$ by removing the penultimate row of $A_2$
(which is equivalent to removing the equation  $\gam_N^{1,1}-\gam_N^{0,1}=\tg^{1}_{2}- f^{1}_2$),
and then removing the last three columns (which is equivalent to setting $\gam_2^{a,b}=0$ for $1\ge a\ge b\ge 0$).
Correspondingly, we obtain $\bfb'_2$
by removing the  penultimate entry $\tg_2^1- f_2^1$  of $\bfb_2$.
Clearly, this new system has a unique solution which also
gives a solution of the original system:
$$\gl_2^{0,0}=-f_2^0,\quad \gl_2^{0,1}=\frac{f_2^1-\tg_2^1}2, \quad \gl_2^{1,1}=-\tg_2^1, \quad \gam_2^{0,1}=2 \gl_2^{0,1},
\quad  \gam_2^{a,b}=0\quad \forall 1\ge a\ge b\ge 0.$$
We can also check that \eqref{equ:gamN=2} with \eqref{equ:glN=2} provides
another set of solution of \eqref{equ:A2}.
\end{eg}

\begin{rem}\label{rem:KT2012}
In an email Kaneko and Tasaka pointed out to us that \cite[(19)]{KanekoTa2013}
should be corrected as follows:
$$\alpha_1=\bar{g}_0^\bfo(q), \quad  \alpha_2=-\alpha_1,\quad
\alpha_3=2\bar{g}_0^\bfo(q)+\bar{g}_0^\bfe(q).
$$
Together with their choice $\gl_2^{0,1}=\gl_2^{1,0}=\gl_2^{1,1}=0$
given in \cite[Theorem 3]{KanekoTa2013} we find the following solution to \eqref{equ:A2}:
\begin{equation*}
\gam_2^{0,0}=f_2^0, \quad
\gam_2^{0,1}=f_2^1, \quad \gam_2^{1,0}=-f_2^1, \quad \gam_2^{1,1}=\tg^1,  \quad
\gl_2^{0,0}=\gl_2^{0,1}=\gl_2^{1,1}=0.
\end{equation*}
since we have the following correspondence between their notation and ours:
\begin{alignat*}{10}
\ga_1 \, & \longleftrightarrow \, & 2\gam_2^{0,1},\quad
&\ga_2 \, & \longleftrightarrow \, & 2\gam_2^{1,0},\quad
&\ga_3 \, &\longleftrightarrow \, & 2\gam_2^{1,1},\\
 \bar g_0^{\bfe}\,  &\longleftrightarrow \, & 2f_2^0,\ \quad
& \bar g_0^{\bfo} \, & \longleftrightarrow \, & 2f_2^1,\ \quad
& g_2^1\,  &\longleftrightarrow \, &\tg_2^1.  \quad
\end{alignat*}
\end{rem}

\begin{eg}
Similarly, when $N=3$ we see that the $\calN(A_3)$ is spanned by the vector
$\bfn_3=(0, 0, 0, 0, 0, 0, 1, 1, 1, 1, 1)$ and
$$\bfn_3 \cdot \bfb_3=g_2^1+g_2^2-2f_2^{0}-3f_2^1-3f_2^2=0$$
which follows from Example~\ref{eg:PrimePowerCase} by taking $p=3$ and $k=1$ there.
Further we can obtain $A'_3$ from the $11\times 15$ matrix $A_3$ by
removing the row of $A_3$ corresponding the equations $\gam_N^{2a,a}-\gam_N^{a,a}= f^{a}_{2}-\tg^{a}_2$ for $a=2$
and then removing the 10th and the last 4 columns
(which is equivalent to setting $\gam_2^{a,b}=0$ for all $2\ge a\ge b\ge 0$ with $(a,b)\ne(1,1)$).
In this way we find the following solution:
\begin{align*}
& \gl_3^{0,0}=-f_2^0, \quad \gl_3^{0,1}=\frac{\tg_2^1-f_2^{0}-2f_2^1}2,
 \quad \gl_3^{1,1}=-f_2^1, \quad \gl_3^{1,2}=\frac{\tg_2^2}2,\\
& \gl_3^{0,2}=-\frac{f_2^0+f_2^2}2,
\quad \gl_3^{2,2}=2\gl_3^{1,2}, \quad \gam_3^{0,1}=\tg_2^1-f_2^0-2f_2^1,\quad \gam_3^{0,2}=-f_2^0-f_2^2, \\
&  \gam_3^{1,2}=f_2^2-\tg_2^2, \quad \gam_3^{1,1}=\tg_2^1-f_2^1, \quad \gam_3^{a,b}=0 \ \forall a\ge b, (a,b)\ne(1,1).
\end{align*}\noindent
Of course, this solution is not unique. For instance, we checked that \eqref{equ:gamN=3}
with \eqref{equ:glN=3} provides another set of solution
of the system $A_3 \bfx_3=\bfb_3$.
\end{eg}

For other levels $N\le 80$ we carried out similar computations by Maple
and verified that $\calN(A_N)$ always annihilates $\bfb_N$ using Corollary~\ref{cor:AllLevels}.
To prove the general case, we need two results concerning dimensions.

\begin{prop} \label{lem:leftNullSpace}
The dimension of the left null space of $\calN(A_N)$ satisfies
$$\dim \calN(A_N)\ge \nu(N)-1.$$
Moreover, for every vector $\bfn\in \calN(A_N)$ we have $\bfn\cdot \bfb_N=0$.
\end{prop}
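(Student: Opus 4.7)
The plan is to prove the proposition by exhibiting an explicit family of $\nu(N)-1$ linearly independent left null vectors $\bfn_{(d)}$ of $A_N$, indexed by the proper divisors $d\mid N$ with $d<N$, and then verifying the annihilation property $\bfn\cdot\bfb_N=0$.

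First I reduce the problem: since the column of $A_N$ indexed by $\gl_N^{a,b}$ has its only nonzero entry ($-2$) in row $(\text{LS}_1^{a,b})$, every $\bfn\in\calN(A_N)$ must vanish on each $(\text{LS}_1)$-row. Hence $\calN(A_N)$ is isomorphic to the left null space of the submatrix $B_N$ obtained from $A_N$ by keeping only the $(\text{LS}_2)$ and $(\text{LS}_3)$ rows restricted to the $\gam$-columns.

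For each divisor $d\mid N$ with $1\le d<N$, let $H_d=d\Z/N\Z$ (a subgroup of $\Z/N\Z$ of order $n=N/d$) and define $\bfn_{(d)}$ to have coefficient $1$ on every row $(\text{LS}_2^{a,b})$ with $\{a,b\}\subseteq H_d$ and $a\ne b$, coefficient $1$ on every row $(\text{LS}_3^a)$ with $a\in H_d\setminus\{0\}$, and $0$ elsewhere. Closure of $H_d$ under addition is the key input: after reindexing $c=a+b$, the $(\text{LS}_2)$-contribution to $\bfn_{(d)}^T B_N$ collapses to $\sum_{a\in H_d}(\gam_N^{2a,a}-\gam_N^{a,a})$, while the $(\text{LS}_3)$-contribution equals the negative of this (extending the $(\text{LS}_3)$-sum to $a=0$ is harmless since $\gam_N^{0,0}-\gam_N^{0,0}=0$). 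Hence $\bfn_{(d)}\in\calN(A_N)$. For linear independence, observe that the coefficient of $\bfn_{(d)}$ at row $(\text{LS}_3^a)$ equals $1$ iff $d\mid\gcd(a,N)$ and $a\ne0$; so any relation $\sum_d c_d\bfn_{(d)}=0$ yields $\sum_{d\mid e}c_d=0$ for each proper divisor $e$ of $N$ (take $e=\gcd(a,N)$), and M\"obius inversion forces every $c_d=0$.

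For annihilation, a direct calculation---using $\tg_2^0=f_2^0$, which follows from the $u\leftrightarrow v$ symmetry between $\gs_1^0$ and $\gk_1^0$---reduces $\bfn_{(d)}\cdot\bfb_N$ to
\begin{equation*}
\sum_{a\in H_d}\tg_2^a \;-\; n\sum_{a\in H_d}f_2^a.
\end{equation*}
The character identity $\sum_{a\in H_d}\eta_N^{au}=n\cdot\mathbf{1}_{n\mid u}$, applied inside the $q$-series expansions of $\tg_2^a$ and $f_2^a$, makes both sides equal to $\frac{n^2}{N^2}\sum_{m\ge1}q^{nm}\sigma_1(m)$ after substituting $u=nu'$ and invoking the $u'\leftrightarrow v$ symmetry in the resulting divisor sum.

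The main obstacle is promoting ``$\bfn_{(d)}\cdot\bfb_N=0$ for each constructed vector'' to ``$\bfn\cdot\bfb_N=0$ for every $\bfn\in\calN(A_N)$'', since the $\bfn_{(d)}$'s only span a subspace of dimension $\nu(N)-1$ and \emph{a priori} additional null vectors could impose further consistency conditions on $\bfb_N$. I would close this by establishing the matching upper bound $\dim\calN(A_N)\le\nu(N)-1$ through a rank computation on $B_N$---equivalently, by exhibiting particular solutions of $B_N\bfx=\bfb_N'$ witnessing that the solution space has the predicted dimension $N(N-1)/2+\nu(N)$ once the free variables listed in \eqref{equ:freeVar} are fixed. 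Were this upper bound to fail, the more refined $q$-series identities of Corollary~\ref{cor:AllLevels}, which separate contributions according to subsets $I$ of prime divisors of $N$, would be needed to kill the extra null vectors; this is presumably the deeper role of Section~\ref{sec: multipleDivisorFunction} in the global proof of Theorem~\ref{thm:EisenSeriesDS}.
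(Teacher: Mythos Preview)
Your proof is correct and structurally parallels the paper's: both construct $\nu(N)-1$ linearly independent left null vectors indexed by the proper divisors $d\mid N$, verify membership in $\calN(A_N)$ and annihilation of $\bfb_N$, and leave the ``for every $\bfn$'' clause to be closed by the rank bound of Proposition~\ref{lem:rankAN}. Your diagnosis of this last dependency is exactly how the paper proceeds as well, so there is no gap here beyond what the paper itself defers.

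The genuine difference is the choice of basis and what it buys. The paper's vector $\bfn_N(d)$ is supported on the rows $(\mathrm{LS}_2^{a,b})$ and $(\mathrm{LS}_3^{a})$ with $\gcd(a,b,N)=d$ (respectively $\gcd(a,N)=d$) \emph{exactly}, whereas your $\bfn_{(d)}$ uses the subgroup condition $a,b\in H_d=d\Z/N\Z$, i.e.\ $d\mid\gcd(a,b,N)$; the two families span the same space and are interchangeable via M\"obius inversion over the divisor poset. Your linear-independence argument via M\"obius inversion on the $(\mathrm{LS}_3)$-coordinates is clean and works; the paper instead reads off independence from the distinct ``leading~$1$'' positions at $\gam^{0,d}$. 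The real payoff of your choice is in the annihilation step: with the subgroup basis, $\bfn_{(d)}\cdot\bfb_N=0$ collapses to the elementary identity
\[
\sum_{a\in H_d}\tg_2^a \;=\; n\sum_{a\in H_d}f_2^a,\qquad n=N/d,
\]
which falls out of the orthogonality $\sum_{a\in H_d}\eta^{av}=n\cdot\mathbf{1}_{n\mid v}$ together with the divisor symmetry $\sum_{uv'=m'}u=\sum_{uv'=m'}v'=\gs_1(m')$. By contrast, the paper's gcd-exact basis forces the annihilation identity into precisely the shape of Corollary~\ref{cor:AllLevels}, and the whole of Section~\ref{sec: multipleDivisorFunction} (Lemma~\ref{lem:KeyetaSum} and Theorem~\ref{thm:AllLevels}) is built to supply it. So your closing speculation is inverted: Corollary~\ref{cor:AllLevels} is not held in reserve for hypothetical extra null vectors beyond your $\nu(N)-1$; it is the paper's (harder) route to the \emph{same} annihilation you established directly. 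Your argument in fact shows that Section~\ref{sec: multipleDivisorFunction} is not logically required for Proposition~\ref{lem:leftNullSpace}, and hence not for Theorem~\ref{thm:EisenSeriesDS} either.
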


\begin{proof}
Throughout this proof we will drop the subscript $N$.
For each divisor $d$ of $N$, if $d<N$ we can obtain a vector $\bfn_N(d)\in \calN(A_N)$
by using the following combination of the families of equations in \eqref{equ:newsystem2}
and \eqref{equ:newsystem3}:

\ \phantom{a} \hskip1cm adding \eqref{equ:newsystem2} with $\gcd(b,N)=d$ and $a=0$,

\ \phantom{a} \hskip1cm adding \eqref{equ:newsystem2} with $\gcd(a,b,N)=d$ for all $1\le a<b<N$, and

\ \phantom{a} \hskip1cm adding  \eqref{equ:newsystem3} with $\gcd(a,N)=d$.

\noindent
This gives rise to the vector $\bfn_N(d)$ whose entries are either 0 or 1 and whose leading 1
occurs at the position corresponding to the variable $\gam^{0,d}$.
We will show that $\bfn_N(d)\in \calN(A_N)$ and $\bfn_N(d)\cdot \bfb_N=0$ which
is equivalent to the fact that ${\rm LHS}=0$ and ${\rm RHS}=0$, respectively, where
\begin{equation}\label{equ:LHS}
\text{LHS}= \sum_{\substack{ \gcd(a,N)=d\\ 1\le a<N}}(\gam^{0,a}-\gam^{2a,a})+
 \sum_{\substack{\gcd(a,b,N)=d\\ 1\le a<b<N}}(\gam^{a,b}+\gam^{b,a}-\gam^{a+b,a}-\gam^{a+b,b } )
\end{equation}
and
\begin{equation*}
\text{RHS}= \sum_{\substack{ \gcd(a,N)=d\\ 1\le a<N}}(\tg_2^a-f_2^a)
    -\sum_{\substack{ \gcd(a,N)=d\\ 1\le a<N}}(f_2^0+f_2^a)
    -\sum_{\substack{\gcd(a,b,N)=d\\ 1\le a<b<N}}(f_2^a+f_2^b ).
\end{equation*}
Also notice that the vectors in $\{\bfn_N(d): d|N, d<N\}$ are linearly independent
since the leading 1 appearing in $\bfn_N(d)$ corresponds to $\gam^{0,d}$.
This implies that $\dim\calN(A_N)\ge \nu(N)-1.$

By considering $(N/d)$-th roots of unity (i.e.\ reducing to level $N/d$) we may
assume without loss of generality that $d=1$ and we simply write $\bfn$ for $\bfn_N(1)$.
To show ${\rm LHS}=0$ we break the
two sums in \eqref{equ:LHS} into the following parts:
$$\begin{array}{ll}
\text{($A_a$): $\gam^{0,a}$ from 1st sum,}   &
\text{($B_a$): $-\gam^{2a,a}$, 1st sum,}  \\
\text{($C_1^{a,b}$): $\gam^{a,b}=\gam^{N+a,b}$, 2nd sum, } &
\text{($C_2^{a,b}$): $\gam^{b,a}$, 2nd sum, }  \\
\text{($D_1^{a,b}$): $-\gam^{a+b,a}$, 2nd sum, }  &
\text{($D_2^{a,b}$): $-\gam^{a+b,b}$, 2nd sum, }\ \\
\text{($E_1^a$): $-\gam^{0,a}$, 2nd sum where $b=N-a$}   & \text{(so $a<N/2$),  }  \ \\
\text{($E_2^b$): $-\gam^{0,b}$, 2nd sum where $a=N-b$}   &  \text{(so $b>N/2$).  }
\end{array}$$
Then we obtain the cancelations as follows:
\begin{align*}
   \sum_{\gcd(a,N)=1}  A_a+\sum_{\gcd(a,N)=1,a<N/2} E_1^a+\sum_{\gcd(b,N)=1,b>N/2} E_2^b =0\\
   \sum_{\gcd(a,N)=1}  A_a+\sum_{\gcd(a,N)=1,a<N/2} E_1^a+\sum_{\gcd(b,N)=1,b>N/2} E_2^b =0\\
   \sum_{\substack{\gcd(a,b,N)=1\\ a<b}}(C_1^{a,b}+C_2^{a,b})=
   \sum_{\substack{\gcd(a,b,N)=1\\ 2N>a>b>0}} \gam^{a,b}=CC_1+CC_2+CC_3
\end{align*}\noindent
where
\begin{align*}
&   CC_1=\sum_{\substack{\gcd(a,b,N)=1\\ 2N>a=2b>0}} \gam^{a,b} =-\sum_{\gcd(a,N)=1} B_a,\\
&    CC_2=\sum_{\substack{\gcd(a,b,N)=1\\ 2N>a>2b>0}} \gam^{a,b}  =
   \sum_{\substack{\gcd(a,b,N)=1\\ 2N>a>2b>0}} \gam^{(a-b)+b,b}=-\sum_{\substack{\gcd(a,b,N)=d\\ a<b}} D_1^{a,b} \\
&     CC_3=\sum_{\substack{\gcd(a,b,N)=1\\ 2N>2b>a>b>0}} \gam^{a,b}  =
    \sum_{\substack{\gcd(a,b,N)=1\\ 2N>a>2b>0}} \gam^{(a-b)+b,b}=-\sum_{\substack{\gcd(a,b,N)=d\\ a<b}} D_2^{a,b}.
\end{align*}\noindent
These implies ${\rm LHS}=0$ which shows that $\bfn\in \calN(A_N)$.

Now we turn to RHS. Since $N=1$ case is trivial we now assume
$N=\prod_{\iij=1}^r p_\iij^{k_\iij}>1$ where $p_1,\dots,p_r$ are pairwise distinct prime
factors of $N$. Recall that
\begin{equation}\label{equ:proofRHS}
\text{RHS}= \sum_{\substack{ \gcd(a,N)=d\\ 1\le a<N}}(\tg_2^a-f_2^a)
    -\sum_{\substack{ \gcd(a,N)=d\\ 1\le a<N}}(f_2^0+f_2^a)
    -\sum_{\substack{\gcd(a,b,N)=d\\ 1\le a<b<N}}(f_2^a+f_2^b ).
\end{equation}
We want to show that the above expression is exactly equal to the difference
of the two sides in Corollary~\ref{cor:AllLevels}, which is therefore 0.
Clearly the coefficients of
$\tg_2^a$ (=1) and $f_2^0$ ($=\gf(N)$) are correct.

Let $\gcd(c,N)=1$. Now we count how many times
$f_2^c$ can appear. Notice that if $a\ne c$ then we have
$\gcd(a,c,N)=1$ and either $a<c$ or $a>c$. Thus the last sum in
\eqref{equ:proofRHS} contributes
$N-1$ copies of $f_2^a$. Combining this with the one copy from
the sum $\sum_{\gcd(a,N)=1}(f_2^0+f_2^a)$ we see that the coefficient
of $f_2^a$ is exactly $N$.

Now we consider $f_2^c$ with $\gcd(c,N)>1$. Without loss of generality we
assume that there is $1\le t<r$ such that $p_\iij|c$ for all $\iij\le t$ and
$p_j\nmid c$ for all $t<j\le r$. Then only the last sum in \eqref{equ:proofRHS}
has nontrivial contributions. In fact, for $1\le a<N$ we see that $p_\iij\nmid a$
($\iij=1,\dots,t$) if and only if $\gcd(a,c,N)=1$. But obviously the number of
such $a$ is given by
$$\prod_{\iij=1}^t \gf(p_\iij^{k_\iij} ) \prod_{j=t+1}^r  p_j^{k_j}$$
which agrees with Corollary~\ref{cor:AllLevels}. This implies that ${\rm RHS}=0$
which shows that $\bfn\cdot \bfb_N=0$.

We have completed the proof of the lemma.
\end{proof}

\begin{prop} \label{lem:rankAN}
The rank of matrix $A_N$  satisfies
$$\rank(A_N)\ge N^2+N-\nu(N).$$
Moreover, one may choose the free variables as in \eqref{equ:freeVar}
when solving the linear system $\eqref{equ:gamStuffleAllN}+\eqref{equ:gamShuffleAllN}$
(or, equivalently, the linear system $\eqref{equ:newsystem1}+\eqref{equ:newsystem2}+\eqref{equ:newsystem3}$).
\end{prop}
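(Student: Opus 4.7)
The plan is to deduce the rank inequality from the upper bound already in hand. Proposition~\ref{lem:leftNullSpace} gives $\dim\calN(A_N)\ge\nu(N)-1$, which by rank-nullity for rows is equivalent to $\rank(A_N)\le (N^2+N-1)-(\nu(N)-1)=N^2+N-\nu(N)$; it therefore suffices to exhibit an invertible square minor of $A_N$ of that size. The natural candidate is the submatrix $B$ whose columns are indexed by the bound variables (the complement of \eqref{equ:freeVar}) and whose rows are all (LS$_1^{a,b}$) for $0\le a\le b<N$, all (LS$_2^{a,b}$) for $0\le a<b<N$, and those (LS$_3^a$) with $1\le a<N$ and $N-a\nmid N$. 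A direct count gives $N(N+1)/2+N(N-1)/2+(N-\nu(N))=N^2+N-\nu(N)$ rows and columns, as required.

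First I would block-decompose $B$ by splitting the columns into the $\gl^{a,b}$'s and the bound $\gam$'s. Since each (LS$_1^{a,b}$) is the unique row containing $\gl^{a,b}$, with coefficient $-2$, while no $\gl$ appears in any (LS$_2$) or (LS$_3$), the matrix has the block form
\[
B=\begin{pmatrix}-2I_{N(N+1)/2} & L\\ 0 & M\end{pmatrix},\qquad \det B=(-2)^{N(N+1)/2}\det M.
\]
The problem reduces to showing that the square submatrix $M$ of size $(N^2+N-2\nu(N))/2$, with rows the retained (LS$_2^{i,j}$)'s and (LS$_3^a$)'s and columns the bound $\gam$'s, is nonsingular.

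To establish $\det M\ne 0$, pair each retained equation with the ``diagonal'' variable whose coefficient in it is $+1$, namely $\gam^{i,j}$ for (LS$_2^{i,j}$) and $\gam^{a,a}$ for (LS$_3^a$). Every other bound $\gam$ appearing in that defining equation yields a directed edge in a digraph on the bound variables; reading off the definitions, the edges are of four kinds:
(A) $\gam^{i,j}\to\gam^{i+j-N,i}$ when $i+j\ge N$;
(B) $\gam^{i,j}\to\gam^{i+j-N,j}$ when $i+j\ge N$;
(C) $\gam^{0,j}\to\gam^{j,j}$ when $\gam^{j,j}$ is bound (i.e.\ $N-j\nmid N$);
(D) $\gam^{a,a}\to\gam^{2a-N,a}$ when $2a>N$.
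If this digraph is acyclic, then ordering both rows and columns by any linear extension of it makes $M$ lower triangular with $+1$'s on the diagonal, whence $\det M=1$.

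The heart of the argument is ruling out cycles. Note that (A) strictly decreases the second index ($j\mapsto i<j$), whereas (B), (C), (D) preserve it; so any cycle must avoid (A). A cycle using only (B), (C), (D) has constant second index, say $j$. In such a cycle (B) and (D) strictly decrease the first index (since $i+j-N<i$ and $2a-N<a$), while only (C) can raise it, so immediately after any (C)-edge the chain is forced to read
\[
\gam^{0,j}\to\gam^{j,j}\to\gam^{2j-N,j}\to\gam^{3j-2N,j}\to\cdots\to\gam^{kj-(k-1)N,j}\to\cdots.
\]
The first index returns to $0$ exactly when $kj=(k-1)N$, i.e.\ $j=N-N/k$ for some integer $k\ge 2$ dividing $N$; but in that case $N-j=N/k\mid N$ and $\gam^{j,j}$ would in fact be \emph{free}, contradicting the standing assumption that the chain visits a bound diagonal. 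Hence after a (C)-edge the first index strictly decreases forever and no cycle can close, so $M$ is nonsingular and $\rank(A_N)=N^2+N-\nu(N)$. The topological order just constructed simultaneously shows that every bound variable is uniquely determined by the prescribed free variables in \eqref{equ:freeVar}, which is the ``moreover'' clause. The main obstacle is the chain-closure arithmetic in the last step; once that is in place, everything else is block-triangular bookkeeping.
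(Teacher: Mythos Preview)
Your proof is correct and rests on the same arithmetic obstruction as the paper's, namely that the chain $\gamma^{j,j}\to\gamma^{2j-N,j}\to\gamma^{3j-2N,j}\to\cdots$ can return to $\gamma^{0,j}$ only when $N-j\mid N$, which is precisely when $\gamma^{j,j}$ has been declared free. The packaging, however, is different. The paper proceeds by an explicit case analysis: it first produces pivotal equations for $\gamma^{a,b}$ with $1\le a<b$ and $a+b<N$ directly from (LS$_2$) (the ``children'' $\gamma^{a+b,a},\gamma^{a+b,b}$ are free in that range), then treats the diagonal $\gamma^{a,a}$ via (LS$_3$) together with a binary tree rooted at $\gamma^{2a-N,a}$ in which each node $\gamma^{i,j}$ with $i+j>N$ spawns children $\gamma^{i+j-N,i}$ and $\gamma^{i+j-N,j}$, terminating because weights strictly decrease; finally it mops up $\gamma^{0,a}$ and the remaining off-diagonals by induction on weight. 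Your digraph formulation subsumes all of this at once: the paper's binary tree is exactly the collection of directed paths in your graph issuing from a diagonal vertex through edges (D), (A), (B), and its termination is your acyclicity. Your block-triangular reduction to $\det M=1$ is cleaner and avoids the five-step case split; the paper's version is more hands-on and makes the intermediate eliminations visible. Either route yields the same invertible minor and the same free-variable set \eqref{equ:freeVar}.
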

\begin{proof}
We will drop the subscript $N$ again for $\gl$'s and $\gam$'s.
We will prove the lemma by producing the following pivot variables
in $\bfx_N$:
\begin{equation}\label{equ:varSet}
\calS=\{ \gl^{a,b}:0\le a\le b<N\} \cup
\{ \gam^{a,b}: 0\le a<b<N\} \cup\{ \gam^{a,a}:1\le a<N,(N-a)\nmid N\}.
\end{equation}
Easy computation shows that the $|\calS|=N^2+N-\nu(N)$
which yields the lemma immediately.

To streamline our proof we start with some ad hoc terminology.
Suppose we have a linear system of variables $x_1,\dots,x_r$ 
(in this particular order). Then an equation produced from this 
system by the elementary operations (namely, multiplying
an equation by a scalar and adding or subtracting two equations)
is called a \emph{pivotal equation}
of variable $x_i$ if $x_i$ appears in the equation while none of 
$x_1,\dots,x_{i-1}$ does. In our situation, our variables $\gl$'s 
and $\gam$'s are ordered as in the vector $\bfx_N$. And clearly 
\eqref{equ:newsystem1} provides the  pivotal equations of $\gl$'s.

We now turn to $\gam$'s. We shall produce their pivotal equations 
by the following steps. We write $\gam^{a,b}=\cdots$ to mean that 
the right hand side does not involve any variables from $\calS$. 
In particular, we may omit all $\gam^{a,b}$ with $a>b$.
So by \eqref{equ:newsystem2} we get the pivotal equation
\begin{equation}\label{equ:step1}
 \gam^{a,b}=\cdots \quad\text{for $1\le a<b<N$ and $a+b<p$.}
\end{equation}
By \eqref{equ:newsystem3} we have
\begin{equation}\label{equ:step2}
\gam^{a,a}=\cdots \quad\text{for $1\le a<N/2$.}
\end{equation}
To derive pivotal equation
$\gam^{a,a}=\cdots$ for $N/2<a<N$ with $(N-a)\nmid N$ (thus $a\le N-2$)
we first notice that for such $a$ there must be some positive integer $k\le N-2 $ such that
\begin{equation}\label{equ:aheight}
0\le \frac{(k-1)N}{k}<a<\frac{kN}{k+1}\le \frac{(N-2)N}{N-1}<N-1.
\end{equation}
Here $a$ is bounded with strict inequality because if $a=(k-1)N/k$ for some $k>1$
then $N-a=N/k$ is a divisor of $N$ which is impossible by our assumption.
We say such an $a$ satisfying \eqref{equ:aheight} has height $h(a)=k$.
If $h(a)=1$ then we are in the case of \eqref{equ:step2}.
If $h(a)=2$ then $3a<2N$, i.e. $(2a-N)+a<N$, so using \eqref{equ:newsystem3} we have
\begin{equation*}
\gam^{a,a}=\gam^{2a-N,a}+\cdots=\cdots
\end{equation*}
by \eqref{equ:step1} since $2a-N<a$. If $h(a)=3$ then by
applying \eqref{equ:newsystem3} followed by (LS$_2^{2a-N,a}$) we get
\begin{equation*}
\gam^{a,a}=\gam^{2a-N,a}+\cdots=\gam^{3a-2N,2a-N}+\gam^{3a-2N,a}+\cdots=\cdots
\end{equation*}
by \eqref{equ:step1} again since now $4a<3N$ (and hence $5a<4N$). Repeating 
this process for $a$ at higher levels we obtain the following binary tree

\begin{center}
\begin{tikzpicture}[scale=0.9]
\node (gs) at (0,3.5) {$\gam^{2a-N,a}$};
\node (A1) at (-4.5375,3) {$\gam^{3a-2N,2a-N}$};
\node (A2) at (4.5375,3)  {$\gam^{3a-2N,a}$};
\node (B1) at (-6.175,1.8) {$\gam^{5a-4N,3a-2N}$};
\node (B2) at (-2.325,1.8) {$\gam^{5a-4N,2a-N}$};
\node (B3) at (2.325,1.8) {$\gam^{4a-3N,3a-2N}$};
\node (B4) at (6.175,1.8) {$\gam^{4a-3N,a}$};

\node (C1) at (-7.25,0.5) {$\gam^{8a-7N,5a-4N}$};
\node (C2) at (-5.1,-0.5) {$\gam^{8a-7N,3a-2N}$};

\node (C3) at (-3.4,0.5) {$\gam^{7a-6N,5a-4N}$};
\node (C4) at (-1.25,-0.5) {$\gam^{7a-6N,2a-N}$};

\node (C5) at (1.25,0.5) {$\gam^{7a-6N,4a-3N}$};
\node (C6) at (3.4,-0.5)  {$\gam^{7a-6N,3a-2N}$};

\node (C7) at (5.1,0.5) {$\gam^{5a-4N,4a-3N}$};
\node (C8) at (7.25,-0.5) {$\gam^{5a-4N,a}$};

\node (D1) at (-7.55,0) {$\vdots$};
\node (D2) at (-6.95,0) {$\vdots$};
\node (D3) at (-5.4,-1) {$\vdots$};
\node (D4) at (-4.8,-1) {$\vdots$};

\node (D5) at (-3.7,0) {$\vdots$};
\node (D6) at (-3.1,0) {$\vdots$};
\node (D7) at (-1.55,-1) {$\vdots$};
\node (D8) at (-0.95,-1) {$\vdots$};

\node (D9) at (0.95,0){$\vdots$};
\node (D10) at (1.55,0){$\vdots$};
\node (D11) at (3.1,-1){$\vdots$};
\node (D12) at (3.7,-1){$\vdots$};

\node (D13) at (4.8,0) {$\vdots$};
\node (D14) at (5.4,0) {$\vdots$};
\node (D15) at (6.95,-1) {$\vdots$};
\node (D16) at (7.55,-1) {$\vdots$};

\draw (gs) to (A1) ;
\draw (gs) to (A2) ;
\draw (A1) to (B1) ;
\draw (A1) to (B2) ;
\draw (A2) to (B3) ;
\draw (A2) to (B4) ;
\draw (B1) to (C1) ;
\draw (B1) to (C2) ;
\draw (B2) to (C3) ;
\draw (B2) to (C4) ;
\draw (B3) to (C5) ;
\draw (B3) to (C6) ;
\draw (B4) to (C7) ;
\draw (B4) to (C8) ;
\draw (C1) to (D1);
\draw (C1) to (D2);
\draw (C2) to (D3);
\draw (C2) to (D4);
\draw (C3) to (D5);
\draw (C3) to (D6);
\draw (C4) to (D7);
\draw (C4) to (D8);
\draw (C5) to (D9);
\draw (C5) to (D10);
\draw (C6) to (D11);
\draw (C6) to (D12);
\draw (C7) to (D13);
\draw (C7) to (D14);
\draw (C8) to (D15);
\draw (C8) to (D16);
\end{tikzpicture}
\end{center}
In general, this tree is constructed by the following rules: if a node 
$\gam^{i,j}$ exists and $i+j>N$ (we call $i+j$ the weight of $\gam^{i,j}$) 
then it produces two descendants: $\gam^{i+j-N,i}$ and $\gam^{i+j-N,j}$; 
if $i+j<N$ then it does not have any descendant and therefore becomes a 
terminal node. It is an easy matter
by the induction to show the following properties of this tree:
\begin{itemize}
  \item Every descendant has smaller weight than its parent. So the tree is finite.

  \item Every node has the form $\gam^{ma-(m-1)N,na-(n-1)N}$ for some integers $m>n\ge 1$.

  \item The weight of every node $\gam^{ma-(m-1)N,na-(n-1)N}$ satisfies $(m+n)a-(m+n-2)N\ne N$. Otherwise $N-a=N/(m+n)$
  is a divisor of $N$ which is impossible.

  \item Every node $\gam^{ma-(m-1)N,na-(n-1)N}$ satisfies $ma-(m-1)N\ne 0$. Otherwise $N-a=N/m$
  is a divisor of $N$ which is impossible.

  \item Every node $\gam^{ma-(m-1)N,na-(n-1)N}$ satisfies $ma-(m-1)N<na-(n-1)N$.
\end{itemize}
Hence, every terminal node $\gam^{i,j}$ of the tree satisfies $1\le i<j$ and $i+j<N$
so it can be canceled by using \eqref{equ:step1}.

To summarize the above, we have produced the pivotal equations for the following

\begin{itemize}
  \item[\upshape{(i)}]  $\gam^{a,b}=\cdots$ for $0\le a<b<N$ and $a+b<N$.

  \item[\upshape{(ii)}]   $\gam^{a,a}=\cdots$ for $1\le a<N$ with $(N-a)\nmid N$.

  Then we may proceed as follows:

  \item[\upshape{(iii)}]   $\gam^{0,a}=\gam^{a,a}+\cdots=\cdots$ for 
  $1\le a<N$  by \eqref{equ:newsystem3} and using (ii).

  \item[\upshape{(iv)}]  $\gam^{a,b}=\gam^{0,a}+\gam^{0,b}+\cdots=\cdots$  
  for $1\le a< b<N$ and $a+b=N$ by \eqref{equ:newsystem2} and (iii).

  \item[\upshape{(v)}]  $\gam^{a,b}=\gam^{a+b-N,a}+\gam^{a+b-N,b}+\cdots=\cdots$ for $0\le a<b<N$ and $a+b>N$ by \eqref{equ:newsystem2}
  and by using the induction on the weight $a+b$ since the weights 
  on the right are strictly smaller than $a+b$.
\end{itemize}
We now have finished the proof of the lemma.
\end{proof}

Finally, Proposition~\ref{prop:gams} follows from Lemma~\ref{lem:leftNullSpace}
and Lemma~\ref{lem:rankAN} immediately since
$$\rank(A_N)+\dim \calN(A_N)=N^2+N-1.$$

\medskip
\noindent{\bf Acknowledgement.} This work was started when both
authors were visiting the Morningside Center of Mathematics in Beijing in 2013.
JZ also would like to thank the Max-Planck Institute for Mathematics,
the Kavli Institute for Theoretical Physics China, and the
National Taiwan University for their hospitality where part of 
this work was done. This paper has benefited
greatly from the anonymous referee's detailed
comments and suggestions including Proposition~\ref{prop:HurwitzZeta}.
HY is partially supported by the 2013 summer research grant from
York College of Pennsylvania. JZ is partially supported by NSF grant DMS1162116.

\end{document}